\documentclass[pdflatex,sn-mathphys-num]{sn-jnl}

\usepackage{graphicx}%
\usepackage{multirow}%
\usepackage{amsmath,amssymb,amsfonts}%
\usepackage{amsthm}%
\usepackage{mathrsfs}%
\usepackage[title]{appendix}%
\usepackage{xcolor}%
\usepackage{textcomp}%
\usepackage{manyfoot}%
\usepackage{booktabs}%
\usepackage{algorithm}%
\usepackage{algorithmicx}%
\usepackage{algpseudocode}%
\usepackage{listings}%

\theoremstyle{thmstyleone}%
\newtheorem{theorem}{Theorem}[section]
\newtheorem{proposition}{Proposition}[section]%
\newtheorem{corollary}{Corollary}[section]
\newtheorem{lemma}{Lemma}[section]

\theoremstyle{thmstyletwo}%
\newtheorem{example}{Example}[section]%
\newtheorem{remark}{Remark}[section]%

\theoremstyle{thmstylethree}%
\newtheorem{definition}{Definition}[section]%
\newtheorem{assumption}{Assumption}[section]

\usepackage{color}
\usepackage{hyperref}
\usepackage{multirow}
\usepackage{mathtools}
\usepackage{enumerate}
\usepackage{makecell}
\def\R{{\mathbb{R}}}

\numberwithin{equation}{section}

\begin{document}

\title[A smoothing extended sequential quadratic method for difference-of-convex optimization over a convex composite inequality constraint]
{A smoothing extended sequential quadratic method for difference-of-convex optimization over a convex composite inequality constraint}


\author[1]{\fnm{Jiefeng} \sur{Xu}}\email{jiefeng.xu@polyu.edu.hk}

\author[1]{\fnm{Ting Kei} \sur{Pong}}\email{tk.pong@polyu.edu.hk}

\author*[2]{\fnm{Yongle} \sur{Zhang}}\email{yongle-zhang@163.com}

\affil[1]{\orgdiv{Department of Applied Mathematics}, \orgname{the Hong Kong Polytechnic University}, \city{Hong Kong}, \country{People’s Republic of China}}

\affil*[2]{School of Mathematical Sciences, Sichuan Normal University, Chengdu, Sichuan, People’s Republic of China}


\abstract{We consider the problem of minimizing a difference-of-convex objective over a convex composite inequality constraint and a compact convex set constraint. To solve this problem, we extend the ESQM in \cite{13Auslender} via incorporating a variable smoothing scheme. In essence, in each iteration of our algorithm, we apply {\em one} proximal gradient step to a {\em smoothed} penalty function, constructed based on a smooth approximation of the convex composite constraint function; and we design explicit rules to update the smoothing and penalty parameters. Under suitable constraint qualifications, we establish an iteration complexity of $O(\epsilon^{-3})$ for obtaining an $(\epsilon,\epsilon)$-KKT point. Moreover, in the convex setting, we show that the whole sequence generated by our algorithm is convergent and derive its local convergence rate under a standard Hölderian growth condition.}

\keywords{Convex composite constraint, sequential quadratic method, smoothing, difference-of-convex optimization}



\maketitle

\section{Introduction}
	Let ${\mathbb X}$ and ${\mathbb Y}$ be two finite-dimensional real Hilbert spaces.
	In this paper, we consider the following convex-composite-constrained
	difference-of-convex (DC) minimization problem
	\begin{equation}\label{opt-dc-hc}
		\begin{array}{rl}
			\min\limits_{x\in \mathbb{X}}
			& \psi(x) \coloneqq f(x) + P_1(x) - P_2(x) + \delta_{\mathcal{C}}(x)
			\\ \text{s.t.}& g(x) \coloneqq h(c(x)) \le 0,
		\end{array}
	\end{equation}
	where $\mathcal{C}\subseteq \mathbb{X}$, $f$, $P_1$, $P_2$, $h$ and $c$ satisfy Assumption~\ref{ass-gen} below.\footnote{See Section~\ref{sec2} for notation.}
	\begin{assumption}\label{ass-gen} In problem \eqref{opt-dc-hc},
		\begin{enumerate}[{\rm (i)}]
			\item the set $\mathcal{C}\subseteq \mathbb{X}$ is compact and convex, and the set $\mathcal{F} \coloneqq \{x \in \mathcal{C} : g(x) \le 0 \}\neq \emptyset$;
			\item the function $f:{\mathbb X}\rightarrow \mathbb{R}$, and the mapping
			$c:{\mathbb X}\rightarrow {\mathbb Y}$ are continuously differentiable, and they are $L_{f}$-smooth and $L_c$-smooth on ${\cal C}$ with constants $L_f>0$ and $L_c\ge 0$, respectively; moreover, $c$ is $M_c$-Lipschitz continuous on ${\cal C}$ with constant $M_c > 0$.
			\item the functions $P_1, P_2 :{\mathbb X}\rightarrow \mathbb{R}$ are convex;\footnote{As is commonly assumed in the literature, we assume that the proximal mapping of $\gamma P_1 + \delta_{\cal C}$ can be computed efficiently for any $\gamma > 0$, and that one can compute an element of $\partial P_2(x)$ for any $x\in \mathbb{X}$.} 	
			\item the function $h :{\mathbb Y}\rightarrow \mathbb{R}$ is convex
			and $M_{h}$-Lipschitz continuous with constant $M_{h}>0$.
		\end{enumerate}
	\end{assumption}
	\noindent Note that the solution set of \eqref{opt-dc-hc} is nonempty under Assumption~\ref{ass-gen}.
	
	Problems of the form \eqref{opt-dc-hc} arise naturally in many contemporary applications \cite{09BDE,15YLHX,25LiYuYaoTongLiangLinYang,21VogelBelletClemencon}, where $f$ is typically a loss function for data fidelity, $P_1$ and $P_2$ are regularizers for inducing desirable structures in the solution, and the inequality constraint $g(x)\le 0$ is there to impose explicit restrictions on the solution. Notice that the Lipschitz continuous convex function $h$ in \eqref{opt-dc-hc} can be leveraged to model {\em any} convex constraints on $c(x)$, i.e., $c(x)\in {\cal D}$ for some closed convex set ${\cal D}$: indeed, one can simply choose $h$ to be the distance function ${\rm dist}(y,{\cal D}) \coloneqq \inf_{w\in {\cal D}}\|y - w\|$. Nevertheless, for algorithmic design, it can be desirable to choose $h$ such that $\inf h < 0$.\footnote{For example, the choice of $h(\cdot) = {\rm dist}(\cdot,{\cal D})$ violates Assumption~\ref{ass-cq} below.} Here, we present two such examples of $h$ and the kind of constraint they can model.
	\begin{example}[Solid convex sets]
		Let $\mathcal{D} \subsetneq {\mathbb Y}$ be a closed convex set with $0 \in \mathrm{int}(\mathcal{D})$.
		Then, from \cite[Theorem 14.5 and Corollary 14.5.1]{70Ro}, it follows that
		the polar ${\cal D}^{\circ}\coloneqq \{y\in \mathbb{Y}: \langle u,y\rangle\le 1\ \ \forall u\in {\cal D}\}$ is a compact convex set and contains the origin; moreover, ${\cal D}^\circ\neq \{0\}$ as ${\cal D}\neq \mathbb{Y}$.
		Let $h(y) \coloneqq \sigma_{\mathcal{D}^{\circ}}(y) - 1$ for any $y \in \mathbb{Y}$.
		Then, for every $y \in \mathbb{Y}$, it holds that
		\begin{equation*}
			y \in \mathcal{D}\quad \Longleftrightarrow \quad
			h(y) \le 0,
		\end{equation*}
		meaning that the constraint $c(x)\in {\cal D}$ can be reformulated as the constraint in \eqref{opt-dc-hc} with $h(\cdot) \coloneqq \sigma_{\mathcal{D}^{\circ}}(\cdot) - 1$.
		Furthermore, since $\mathcal{D}^{\circ}$ is nonempty and compact, the function $\sigma_{\mathcal{D}^{\circ}}$ is real-valued, convex, and $M_{\mathcal{D}^{\circ}}$-Lipschitz continuous with $M_{\mathcal{D}^{\circ}} \coloneqq \max_{u\in \mathcal{D}^{\circ}} \|u\| > 0$ (recall that ${\cal D}^\circ \neq \{0\}$). Therefore, Assumption~\ref{ass-gen}(iv) holds for $h(\cdot) = \sigma_{\mathcal{D}^{\circ}}(\cdot) - 1$, and we have $\inf h \le h(0) = -1 < 0$.
	\end{example}
	\begin{example}[Solid and pointed cones]\label{ex-conic constraint}
		Let $\mathcal{K} \subseteq {\mathbb Y}$ be a closed convex pointed cone with nonempty interior.
		Then, the constraint $c(x) \in \mathcal{K}$ can be reformulated as the constraint
		in \eqref{opt-dc-hc} with $h\coloneqq \sigma_{\mathcal{B}}$,
		where $\mathcal{B}$ is a compact base\footnote{A base for a cone $\mathcal{Q}\subseteq \mathbb{Y}$ is a convex set $\mathcal{D}$ with $0\notin \mathrm{cl}(\mathcal{D})$ and $\mathcal{Q} = \cup_{t\ge 0} t\mathcal{D}$.} of the polar cone $\mathcal{K}^{\circ}\coloneqq \{y\in \mathbb{Y}: \langle u,y\rangle \le 0\ \ \forall u\in {\cal K}\}$; see, e.g., \cite[Page~60, Exercise 20]{06BL} for the existence of base.
		Moreover, since $\mathcal{B}$ is compact and does not contain the origin, the support function $\sigma_{\mathcal{B}}$ is real-valued, convex, and $M_{\mathcal{B}}$-Lipschitz continuous with $M_{\mathcal{B}} \coloneqq \max_{u\in \mathcal{B}} \|u\| > 0$. Hence, Assumption~\ref{ass-gen}(iv) holds for $h = \sigma_{\mathcal{B}}$, and we have $\sigma_{\cal B}(y) < 0$ if and only if $y\in {\rm int}({\cal K})$.
	\end{example}
	Existing works on \eqref{opt-dc-hc} mainly adopted penalty-based or augmented Lagrangian-based approaches; see e.g., \cite{04CR, 22DJKM, 07KF, 05KNKF,13Auslender,15Auslender,18BST,22LMX,23KMM,23JKMW,26DGO}. One representative work is \cite{15Auslender}, which considered \eqref{opt-dc-hc} with $P_1 = P_2 = 0$ and $h$ chosen to model conic constraints. Specifically, the author in \cite{15Auslender} considered some nonnegative finite convex functions $h$ that are zero precisely inside the closed convex cone, and proposed the sequential linear cone method (SLCM). This method can be seen as a generalization of the extended sequential quadratic method (ESQM) in \cite{13Auslender} designed for the case when $\mathbb{Y} = \R^m$ and $h(y) = \max_{1\le i\le m}y_i$, and has close connections with various exact penalty methods for conic programs; see, e.g., \cite{04CR, 07KF, 05KNKF}. A basic version of SLCM can be described as follows: in each iteration, for some suitably chosen $\gamma_k > 0$ and $L_k > 0$, one minimizes
	\begin{equation}\label{subproblem}
		\gamma_k\left(\langle\nabla f(x^k),d\rangle + \frac{L_k}{2}\|d\|^2\right) + h^+(c(x^k) + Dc(x^k)d) + \delta_{\cal C}(x^k+d)
	\end{equation}
	to obtain $d^k$, where $h^+(y) \coloneqq \max\{h(y),0\}$, and then sets $x^{k+1} = x^k + \tau_k d^k$; here, $\tau_k$ is chosen via linesearch to induce a sufficient descent on the function $x\mapsto \gamma_k f(x) + h^+(c(x))$, and $\gamma_k$ is updated adaptively according to whether $h^+(c(x^k) + Dc(x^k)d^k) = 0$, see \cite[Section~3.2]{15Auslender}. It was shown in \cite[Section~3.3]{15Auslender} that, under suitable constraint qualifications, $\gamma_k$ stays constant for all sufficiently large $k$ and the sequence $\{x^k\}$ generated by SLCM clusters at a stationary point of the special case of \eqref{opt-dc-hc} studied therein.
	
	Note that one can readily extend the algorithmic framework of SLCM to cover the general problem \eqref{opt-dc-hc} under Assumption~\ref{ass-gen}, in view of recent studies of ESQM-type methods on optimization problems with DC objectives (see, e.g., \cite{25ZPX}). However, the subproblems that arise can be difficult to solve. Indeed, even when $P_1 = P_2 = 0$, the corresponding subproblem \eqref{subproblem} of SLCM requires an iterative solver in general and can only be solved {\em inexactly}. Difficult subproblems also arise naturally when one applies other penalty-based / augmented-Lagrangian-based approaches; see, e.g., \cite{16BP,18BST,22LMX,23KMM,23JKMW}. Recently, attempts have been made in \cite{25LiuXu,YangLiHuLinYang26} to avoid difficult subproblems,
	and their settings cover \eqref{opt-dc-hc} with $P_1 = P_2 = 0$,
	$\mathbb{Y} = \mathbb{R}^m$ and $h(y) = \sum_{i=1}^m\max\{y_i,0\}$.
	\footnote{The methods in \cite{25LiuXu,YangLiHuLinYang26} were proposed more generally to handle stochastic constraints and possibly nonsmooth weakly convex $f$ and $c$, and \cite{25LiuXu} also incorporated a SPIDER-type variance reduction technique. Their complexity results were derived under the more general setting.}
	Specifically, in \cite{YangLiHuLinYang26} where it was assumed that ${\cal C} = {\mathbb{R}}^n$ (instead of being compact), upon fixing a desired tolerance $\epsilon > 0$, the authors estimated an exact penalty parameter $\bar\beta$, and applied the subgradient method to the function $f(\cdot) + \bar\beta h(c(\cdot))$; see \cite[Algorithm~1]{YangLiHuLinYang26}.
	Their algorithm can find a nearly $\epsilon$--Karush-Kuhn-Tucker (KKT) point (in the sense of \cite[Definition~3.3]{YangLiHuLinYang26}) in expectation within $O(\epsilon^{-6})$ iterations; see \cite[Corollary~5.4]{YangLiHuLinYang26}.
	The authors in \cite{25LiuXu} 
	further incorporated a smoothing technique and applied the gradient projection method to the function\footnote{Notice that the gradient projection method can be applied efficiently if the gradients of $f$ and $H_{\bar \mu}\circ c$ can be computed efficiently and the projection onto ${\cal C}$ admits an efficient computational routine.}
	\[
	f(x) + \bar\beta H_{\bar \mu}(c(x))+ \delta_{\cal C}(x),
	\]
	where $H_{\bar \mu}$ is a Huber-type smoothing function for $h$; see \cite[Section~1.2]{25LiuXu}.
	They established in \cite[Section~3.3]{25LiuXu} an iteration complexity of $O(\epsilon^{-4})$ to obtain an $(\epsilon, \epsilon)$-KKT point in expectation (in the sense of \cite[Definition~1.3]{25LiuXu}).
	However, the methods in \cite{25LiuXu,YangLiHuLinYang26} require the target tolerance to be specified in advance as well as an estimation of $\bar \beta$ (and also $\bar \mu$ for \cite{25LiuXu}). Can we develop an exact-penalty-type method for \eqref{opt-dc-hc} that {\em does not require the tolerance} or estimation of the exact penalty parameter {\em as input}, and its subproblems {\em do not require iterative solvers}?
	
	In this paper, we develop such an algorithm by extending ESQM in \cite{13Auslender} via incorporating a {\em variable smoothing} scheme to solve \eqref{opt-dc-hc}. Specifically, in each iteration of our algorithm, for some suitably chosen $\gamma_k > 0$ and $\mu_k >0$, upon taking a $\xi^k\in \partial P_2(x^k)$, we apply {\em one} proximal-gradient step (with a suitable stepsize) to the function
	\begin{equation*}
		f(x)-\langle\xi_k,x - x^k\rangle + \gamma_k^{-1} H_{\mu_k}(c(x)) + (P_1 + \delta_{\cal C})(x)
	\end{equation*}
	to obtain $x^{k+1}$, where $H_\mu$ is a suitable smooth approximation of $h^+$. Then $\mu_k$ and $\gamma_k$ are updated according to some specific rules. In particular, we enforce $\lim_{k\to\infty}\mu_k = 0$ so that our approach is in line with recent works on variable smoothing methods (see, e.g., \cite{21BW,20BB,18TDFC,15BH} and references therein), and update $\gamma_k$ appropriately to induce feasibility asymptotically. Under a standard constraint qualification and suitably designed $\{\mu_k\}$, we show that $\gamma_k$ stays constant for all sufficiently large $k$ according to our update rule (a phenomenon also observed in other variants of ESQM \cite{13Auslender,15Auslender,25ZPX}), and establish an iteration complexity of $O(\max\{\epsilon_1^{-\frac{2}{1-r}}, \epsilon_2^{-\frac{1}{r}}\})$ when $\mu_k = \mu_0(1+k)^{-r}$ for all $k$ with some constants $\mu_0 > 0$ and $r \in (0,1)$, for obtaining an $(\epsilon_1,\epsilon_2)$-KKT point of \eqref{opt-dc-hc} (see Definition~\ref{def_AKKT}).
	In particular, by setting $r = \frac{1}{3}$ and $\epsilon_1=\epsilon_2$, the resulting complexity, namely $O(\epsilon_1^{-3})$, matches that of the variable smoothing method for unconstrained weakly convex composite minimization problems in \cite{21BW}. Next, in the convex setting (i.e., when $f$ is convex, $P_2 = 0$ and $c$ is $(-{\rm hzn}\,h)$-convex), we present explicit rules for choosing $\{\mu_k\}$ that guarantee asymptotic convergence of the sequence generated by our algorithm, and establish its local convergence rate under an additional H\"olderian error bound condition. 
	
	The rest of the paper is organized as follows. We present notation and preliminaries in Section~\ref{sec2}, including optimality conditions of \eqref{opt-dc-hc} under suitable constraint qualifications and properties of smoothing approximations. Our algorithm is presented in Section~\ref{sec3}, where we also prove its well-definedness and establish its iteration complexity for obtaining an $(\epsilon_1,\epsilon_2)$-KKT point. We study asymptotic convergence under the convex setting in Section~\ref{sec4}. Finally, we present numerical experiments in Section~\ref{sec5}.

\section{Notation and preliminaries}\label{sec2}
Throughout this paper, $\mathbb{X}$ and $\mathbb{Y}$ are finite-dimensional real Hilbert spaces, and we denote the standard inner product and the induced norm of the underlying Hilbert space by $\langle \cdot, \cdot \rangle$ and $\|\cdot\|$, respectively. For $p \in [1,\infty]$, we let $\|\cdot\|_{p}$ denote the standard $\ell_{p}$-norm on $\mathbb{R}^{n}$, i.e., $\|x\|_p \coloneqq (\sum_{i=1}^n|x_i|^p)^\frac1p$. The set of natural numbers is denoted by $\mathbb{N}$, and we write ${\mathbb{N}}_0\coloneqq \mathbb{N}\cup\{0\}$.
For a positive integer $m$, we write $[m]\coloneqq \{1,2,\ldots, m\}$.
	
	For a closed convex set $\mathcal{D} \subseteq \mathbb{X}$, its horizon (or recession) cone
	is the closed convex cone given by
	$$
	\mathcal{D}^{\infty}\coloneqq \{d\in \mathbb{X}: x + td\in {\cal D} \ \ \forall t > 0\},
	$$
	where $x$ is any point in ${\cal D}$, and the above definition is known to be independent of the choice of $x\in {\cal D}$.
	We let $\delta_{\cal D}$ denote the indicator function of ${\cal D}$, i.e., $\delta_{\cal D}$ is zero in ${\cal D}$ and infinity otherwise, and we denote the support function of ${\cal D}$ by $\sigma_{\cal D}$, which is defined as $\sigma_{\cal D}(x) \coloneqq \sup\{\langle x,u\rangle: u\in {\cal D}\}$ for all $x\in \mathbb{X}$. For a convex cone ${\cal Q}\subseteq \mathbb{X}$, its polar is defined as ${\cal Q}^\circ \coloneqq \{u\in \mathbb{X}: \langle u,x\rangle\le 0\ \ \forall x\in {\cal Q}\}$.
	
	For a function $\phi : \mathbb{X} \to \mathbb{R}$, we define its \emph{positive part} $\phi^{+} : \mathbb{X} \to \mathbb{R}_{+}$ by
	$
	\phi^{+}(x) \coloneqq \max\{\phi(x),0\}
	$
	for all $x \in \mathbb{X}$.
	A function $\phi : \mathbb{X} \to (-\infty,\infty]$ is said to be \emph{proper} if its effective domain
	$
	\mathrm{dom}\,\phi \coloneqq \{x \in \mathbb{X} : \phi(x) < \infty\}
	$
	is nonempty, and \emph{closed} if it is lower semicontinuous. Following \cite[Definition~8.3]{98RW}, for  a proper function
	$\phi : \mathbb{X} \to (-\infty,\infty]$ and an $x \in \mathrm{dom}\,\phi$,
	we call $\xi \in \mathbb{X}$ a \emph{regular subgradient} of $\phi$ at $x$ if
	\[
	\phi(z) \ge \phi(x) + \langle \xi, z - x \rangle + o(\|z - x\|)
	\quad \text{as } z \to x;
	\]
	the set of regular subgradients of $\phi$ at $x$ is denoted by  $\widehat{\partial}\phi(x)$. We call $\xi \in \mathbb{X}$ a (limiting) \emph{subgradient} of $\phi$ at an $x\in {\rm dom}\,\phi$ if there exist sequences $\{x^t\}$ and $\{\xi^t\}$ such that $x^{t} \to x$, $\phi(x^{t}) \to \phi(x)$, $\xi^{t} \to \xi$ and $\xi^{t} \in \widehat{\partial}\phi(x^{t})$ for all $t$, and we denote the set of subgradients of $\phi$ at $x$ by $\partial \phi(x)$; this set is called the subdifferential of $\phi$ at $x$. 
	For a proper convex function $\phi : \mathbb{X} \to (-\infty,\infty]$, it is known (see, e.g., \cite[Proposition~8.12]{98RW}) that its \emph{subdifferential} at $x \in \mathrm{dom}\, \phi$ is given by
	\[
	\partial \phi(x)
	= \left\{ \xi \in \mathbb{X} :
	\phi(z) \ge \phi(x) + \langle \xi, z - x \rangle
	\quad \forall z \in \mathbb{X} \right\};
	\]
	moreover, for $\epsilon \ge 0$, the \emph{$\epsilon$-subdifferential} of $\phi$ at $x$ is defined by
	\[
	\partial_{\epsilon} \phi(x)
	\coloneqq \left\{ \xi \in \mathbb{X} :
	\phi(z) \ge \phi(x) + \langle \xi, z - x \rangle - \epsilon
	\quad \forall z \in \mathbb{X} \right\}.
	\]
	Clearly, $\partial_{0}\phi(x) = \partial\phi(x)$.
	For a proper closed convex function $\phi : \mathbb{X} \to (-\infty,\infty]$, the \emph{horizon function} $\phi^{\infty}$ is defined via its epigraph by
	$
	\mathrm{epi}\,\phi^{\infty} \coloneqq (\mathrm{epi}\,\phi)^{\infty},
	$
	where $\mathrm{epi}\, \phi \coloneqq \{(x, \lambda) \in \mathbb{X} \times \mathbb{R} : \phi(x) \le \lambda \}$,
	and the \emph{horizon cone} of $\phi$ is defined as
	\[
	\mathrm{hzn}\,\phi
	\coloneqq \left\{ d \in \mathbb{X} :
	\phi(x + d) \le \phi(x)
	\quad \forall x \in \mathrm{dom}\,\phi \right\};
	\]
	moreover, the conjugate function $\phi^*$ of $\phi$ is defined by $\phi^*(\xi) \coloneqq \sup_{x \in \mathbb{X}} \{\langle \xi, x \rangle - \phi(x)\}$ for all $\xi\in \mathbb{X}$. Finally, the normal cone of a closed convex set $\mathcal{D} \subseteq \mathbb{X}$ at $x\in \mathcal{D}$ is defined as ${\cal N}_{\cal D}(x) \coloneqq \partial \delta_{\cal D}(x)$, and the tangent cone of $\mathcal{D}$ at $x\in \mathcal{D}$ is ${\cal T}_{\cal D}(x)\coloneqq({\cal N}_{\cal D}(x))^\circ$.

\subsection{Optimality conditions}

	We first derive optimality conditions for \eqref{opt-dc-hc} under the following constraint qualification (CQ). This CQ is a direct adaptation of the ECQC($x$) in \cite[Eq.~(2.44)]{15Auslender}, which was proposed for the case when $h \in {\cal G}_K$ defined in \cite[Eq.~(1.7)~\&~(1.8)]{15Auslender}.
	\begin{assumption}[CQ]
		\label{ass-cq}
		For problem \eqref{opt-dc-hc},
		\begin{equation}
			\label{eq-infeas-cq}
			0 \notin \partial g(x) + \mathcal{N}_{\mathcal{C}}(x)
			\quad \forall x\in {\cal C}\ {\rm with}\  g(x) \ge 0.
		\end{equation}
	\end{assumption}
	\begin{remark}\label{Remark_2}
		Note that under Assumption~\ref{ass-gen}, the condition \eqref{eq-infeas-cq} can be equivalently rewritten as
		\[
		0 \notin \partial (g + \delta_{\cal C})(x)
		\quad \forall x\in {\cal C}\ {\rm with}\  g(x) \ge 0.
		\]
		This is because $\partial (g + \delta_{\cal C})(x) = \partial g(x) + \mathcal{N}_{\mathcal{C}}(x)$, thanks to \cite[Exercise~10.26(a)~\&~(b)]{98RW}, and the fact that real-valued convex functions and closed convex sets are amenable in the sense of \cite[Definition~10.23]{98RW}.
	\end{remark}
	
	Suppose that Assumptions~\ref{ass-gen} and \ref{ass-cq} hold.
	Noting that real-valued convex functions are amenable in the sense of \cite[Definition~10.23]{98RW}, we see from \cite[Exercise~10.26(b)]{98RW} that
	\begin{equation}\label{eq-part ghc}
		\partial g(x) = D c(x)^{*} \partial h(c(x)) \quad \forall x\in \mathbb{X}.
	\end{equation}
	Hence, whenever $x\in {\cal C}$ and $g(x) = 0$, we have
	\[
	{\cal N}_{\cal F}(x)\overset{\rm (a)}\subseteq \bigcup_{\lambda > 0}\lambda \partial(g + \delta_{\cal C})(x)\cup {\cal N}_{\cal C}(x)
	\overset{\rm (b)}\subseteq \bigcup_{\lambda \ge 0}\lambda D c(x)^{*} \partial h(c(x)) + {\cal N}_{\cal C}(x),
	\]
	where (a) follows from Remark~\ref{Remark_2}, \cite[Proposition~10.3]{98RW} and \cite[Corollary~10.9]{98RW} (this corollary is used for showing that the horizon subdifferential of $g + \delta_C$ at $x$ belongs to ${\cal N}_{\cal C}(x)$), and (b) follows from the local Lipschitz continuity of $g$, \cite[Corollary~10.9]{98RW} and \eqref{eq-part ghc}. Using this observation, one can deduce that if $\bar x$ is a local minimizer of \eqref{opt-dc-hc}, then either $g(\bar x) < 0$ and $0\in \partial f(\bar x) + \partial P_1(\bar x) - \partial P_2(\bar x) + {\cal N}_{\cal C}(\bar x)$, or $g(\bar x) = 0$ and
	\[
	0 \in \partial f(\bar x) + \partial P_1(\bar x) - \partial P_2(\bar x) + \bigcup_{\lambda \ge 0}\lambda D c(\bar x)^{*} \partial h(c(\bar x)) + {\cal N}_{\cal C}(\bar x).
	\]
	In other words, under Assumptions~\ref{ass-gen} and \ref{ass-cq}, for any local minimizer $\bar x$ of \eqref{opt-dc-hc}, there exist $u\in \mathbb{Y}$ and $\lambda \ge 0$ such that
	\begin{align*}
		& 0 \in \nabla f(\bar x) + \partial P_1(\bar x) - \partial P_2(\bar x)
		+ \lambda Dc(\bar x)^* u
		+ \mathcal{N}_{\mathcal{C}}(\bar x),
		\\ & g(\bar x) \le 0, \quad u \in \partial h(c(\bar x)),
		\quad \lambda g(\bar x)= 0.
	\end{align*}
	This motivates the following notion of (approximate) Karush-Kuhn-Tucker (KKT) points.
	\begin{definition}[(Approximate) KKT points]\label{def_AKKT}
		Let $\epsilon_1\ge 0$ and $\epsilon_2 \ge 0$.
		A point $x \in \mathcal{C}$ is called an $(\epsilon_1, \epsilon_2)$-KKT point of \eqref{opt-dc-hc},
		if there exist $z\in \mathcal{C}$, $u \in \mathbb{Y}$, and $\lambda\ge 0$ such that
		\begin{align*}
			& \mathrm{dist}\left(0, \nabla f(x) + \partial P_1(x) - \partial P_2(z)
			+ \lambda Dc(x)^* u
			+ \mathcal{N}_{\mathcal{C}}(x)\right) \le \epsilon_1,
			\\ & g(x) \le \epsilon_2, \quad u \in \partial_{\epsilon_2}h(c(x)),
			\quad |\lambda g(x)| \le \epsilon_2
			\quad \text{and} \quad
			\|x-z\| \le \epsilon_1 \epsilon_2.
		\end{align*}
		When $\epsilon_1=\epsilon_2$, an $(\epsilon_1, \epsilon_2)$-KKT point
		is referred to as an $\epsilon_1$-KKT point;
		moreover, we refer to a $0$-KKT point simply as a KKT point.
	\end{definition}
	
	From the above discussion, we see that under Assumptions~\ref{ass-gen} and \ref{ass-cq}, any local minimizer $\bar x$ of \eqref{opt-dc-hc} is a KKT point. We would like to point out that the subdifferential of $P_2$ is computed at a $z$ close to $x$ (rather than at $x$); this flexibility is introduced to account for the fact that $\partial P_2(\cdot)$ is not continuous in general; see also \cite[Definition~2.8]{25XPS}.
	In this paper, we will analyze the iteration complexity of the algorithm we propose later for obtaining $(\epsilon_1, \epsilon_2)$-KKT points for \eqref{opt-dc-hc} in the sense of Definition~\ref{def_AKKT}, under Assumptions~\ref{ass-gen} and \ref{ass-cq}.
	
	We now take a closer look at Assumption~\ref{ass-cq}. The next remark explains the connection between Assumption~\ref{ass-cq} and a standard constraint qualification for conic programs (which is a special case of \eqref{opt-dc-hc} in view of Example~\ref{ex-conic constraint}). This relationship was proved in the discussions in \cite[Section~2.6]{15Auslender}, and we include a short proof for the convenience of readers.
	
	\begin{remark}[CQ for conic programming]
		Consider \eqref{opt-dc-hc} with $h$ given in Example~\ref{ex-conic constraint}.
		Suppose that Assumption~\ref{ass-gen}(i)--(iii) hold.
		We claim that, for every $x\in \mathcal{C}$ with $g(x) = 0$,
		the condition
		\begin{equation*}
			0 \notin \partial g(x) + \mathcal{N}_{\mathcal{C}}(x)
		\end{equation*}
		is satisfied if and only if the following Robinson's constraint qualification (RCQ) holds:
		\begin{equation*}
			0 \in \mathrm{int}\left( c(x) + Dc(x) \mathcal{T}_{\mathcal{C}}(x) - \mathcal{K}\right).
		\end{equation*}
		
		Indeed, let $x\in \mathcal{C}$ with
		$\sigma_{\mathcal{B}}(c(x)) = 0$.
		We have that
		\begin{align*}
			& 0 \in \mathrm{int}\left( c(x) + Dc(x) \mathcal{T}_{\mathcal{C}}(x) - \mathcal{K}\right)
			\Longleftrightarrow 0 \in \mathrm{int}\left( \mathcal{T}_{\mathcal{K}}(c(x)) - Dc(x) \mathcal{T}_{\mathcal{C}}(x)\right)
			\\ & \Longleftrightarrow \mathcal{T}_{\mathcal{K}}(c(x)) - Dc(x)\mathcal{T}_{\mathcal{C}}(x) = \mathbb{Y}
			\Longleftrightarrow \mathcal{N}_{\mathcal{K}}(c(x)) \cap (-Dc(x)\mathcal{T}_{\mathcal{C}}(x))^{\circ} =  \{0\}
			\\ & \Longleftrightarrow \partial \sigma_{\mathcal{B}}(c(x)) \cap (-Dc(x)\mathcal{T}_{\mathcal{C}}(x))^{\circ} =  \emptyset,
		\end{align*}
		where the last equivalence holds because it holds that $0 \notin \partial \sigma_{\mathcal{B}}(y)$ for all $y \in \mathcal{K}$,\footnote{This is true because $\sigma_{\cal B}(\hat y) < 0$ whenever $\hat y \in {\rm int}(K)$, which further implies that $\inf \sigma_{\cal B} = -\infty$.} and that
		$\mathcal{N}_{\mathcal{K}}(y) = \cup_{\lambda \ge 0} \lambda
		\partial \sigma_{\mathcal{B}}(y)$ whenever $\sigma_{\cal B}(y) = 0$.
		
		Moreover, $u \in (-Dc(x)\mathcal{T}_{\mathcal{C}}(x))^{\circ}$
		if and only if $\langle -Dc(x)^* u, d\rangle = \langle u, - Dc(x)d\rangle \le 0$ for all $d \in \mathcal{T}_{\mathcal{C}}(x)$,
		or equivalently, $0 \in Dc(x)^* u + \mathcal{N}_{\mathcal{C}}(x)$.
		Combining this fact with the above display and \eqref{eq-part ghc},
		we obtain the desired conclusion.
	\end{remark}
	
	In the next lemma, we derive an equivalent form of the CQ in Assumption~\ref{ass-cq}.
	The $\eta_1$ that arises in the lemma will be used for deriving bounds on some important numeric sequences generated by the algorithm
	we propose later; see Theorem~\ref{th-gamma-bd} below.
	\begin{lemma}[Reformulation of CQ]\label{lm-eta-cq}
		Consider \eqref{opt-dc-hc}.
		Suppose that Assumptions~\ref{ass-gen} and~\ref{ass-cq} hold.
		Then there exists a constant $\eta_{1}>0$ such that, for every $x\in \mathcal{C}$ with $g(x) \ge -\eta_1$,
		\begin{equation}\label{eq-cq-eta-lb}
			\mathrm{dist}(0, Dc(x)^*u + \mathcal{N}_{\mathcal{C}}(x)) \ge \eta_{1} \quad \forall u\in \partial_{\eta_{1}} h(c(x)).
		\end{equation}
	\end{lemma}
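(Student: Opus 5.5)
We argue by contradiction, using compactness of $\mathcal{C}$ and outer semicontinuity of the $\epsilon$-subdifferential and of the normal cone. Suppose no such $\eta_1>0$ exists. Then for each $k\in\mathbb{N}$, taking $\eta=1/k$, we find $x^k\in\mathcal{C}$ with $g(x^k)\ge -1/k$, together with $u^k\in\partial_{1/k}h(c(x^k))$ and $v^k\in\mathcal{N}_{\mathcal{C}}(x^k)$, such that
\[
\|Dc(x^k)^*u^k+v^k\|<1/k .
\]

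The first step is to extract convergent subsequences. Since $\mathcal{C}$ is compact, we may assume $x^k\to\bar x\in\mathcal{C}$, and continuity of $g=h\circ c$ gives $g(\bar x)\ge 0$. For bounding $u^k$ we use that $h$ is $M_h$-Lipschitz, which gives the standard bound $\|u\|\le M_h$ for all $u\in\partial_\epsilon h(y)$ and all $\epsilon\ge 0$. Indeed, plugging $z=y+t u/\|u\|$ into the $\epsilon$-subgradient inequality yields $t\|u\|-\epsilon\le tM_h$ for every $t>0$, and letting $t\to\infty$ gives $\|u\|\le M_h$. Hence we may also assume $u^k\to\bar u$. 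Because $Dc$ is continuous, $Dc(x^k)^*u^k$ is bounded, and therefore so is $v^k=-Dc(x^k)^*u^k+(\text{term of norm}<1/k)$. Passing to a further subsequence, $v^k\to\bar v$.

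The second step passes to the limit in each inclusion. For $u^k$, take any $z$ in the inequality $h(z)\ge h(c(x^k))+\langle u^k,z-c(x^k)\rangle-1/k$. Letting $k\to\infty$ and using continuity of $h$ and $c$ gives $h(z)\ge h(c(\bar x))+\langle\bar u,z-c(\bar x)\rangle$, so $\bar u\in\partial h(c(\bar x))$. For $v^k$, the inequality $\langle v^k,y-x^k\rangle\le 0$ for all $y\in\mathcal{C}$ passes to the limit as well, giving $\bar v\in\mathcal{N}_{\mathcal{C}}(\bar x)$. Finally, $Dc(\bar x)^*\bar u+\bar v=0$.

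By \eqref{eq-part ghc}, $Dc(\bar x)^*\bar u\in\partial g(\bar x)$, so $0\in\partial g(\bar x)+\mathcal{N}_{\mathcal{C}}(\bar x)$ at a point $\bar x\in\mathcal{C}$ with $g(\bar x)\ge 0$. This contradicts Assumption~\ref{ass-cq}, which proves the lemma.

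The only delicate point is the boundedness of $\{v^k\}$, since normal cones are unbounded. This is resolved by the bound on $\|u^k\|$, which comes from the Lipschitz continuity of $h$, combined with boundedness of $Dc$ on the compact set $\mathcal{C}$. A single $\eta_1$ then serves all three roles (the threshold on $g$, the $\epsilon$ in the subdifferential, and the lower bound on the distance), because each condition only becomes weaker as $\eta$ decreases.
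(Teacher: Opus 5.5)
Your proposal is correct and follows the paper's proof. Both argue by contradiction with $\eta=1/k$, use compactness of $\mathcal{C}$ and the Lipschitz bound on $\epsilon$-subgradients of $h$ to extract limits, pass to the limit in the $\epsilon$-subgradient and normal-cone inclusions, and contradict Assumption~\ref{ass-cq} via the chain rule for $\partial g$. Your only departures are presentational: you prove $\|u\|\le M_h$ directly instead of citing Z\u{a}linescu, and you make explicit the boundedness and extraction of the normal vectors $v^k$, which the paper leaves implicit.
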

	
	\begin{proof}
		Suppose to the contrary that, for every $k\in \mathbb{N}_{0}$,
		there exist $x^{k} \in \mathcal{C}$ and
		$u^{k} \in \partial_{\tfrac{1}{k+1}} h(c(x^{k}))$
		such that
		\begin{equation*}
			g(x^{k}) \ge - \tfrac{1}{k+1} \quad \mbox{and} \quad \mathrm{dist}(0, Dc(x^{k})^*u^{k} + \mathcal{N}_{\mathcal{C}}(x^{k})) < \tfrac{1}{k+1}.
		\end{equation*}
		Notice that $\{x^{k}\} \subseteq \mathcal{C}$ is bounded. In addition, in view of Assumption~\ref{ass-gen}(iv) and \cite[Theorem~2.4.13]{02Zalinescu}, we see that the sequence $\{u^{k}\}$ is bounded because $u^k\in \partial_1h(c(x^k))$ for all $k$ and $\{x^{k}\}$ is bounded. Hence, there exist subsequences $\{x^{k_j}\}$ and $\{u^{k_j}\}$ such that $\lim_{j\to \infty} x^{k_j} = \bar{x}$ and $\lim_{j\to \infty} u^{k_j} = \bar{u}$ for some $\bar{x}\in \mathbb{X}$ and $\bar{u} \in \mathbb{Y}$.
		Then, using the closedness of $\mathcal{C}$ and $\mathcal{N}_{\mathcal C}$, together with the fact
		$$
		\limsup_{j \to \infty } \partial_{\tfrac{1}{k_{j}+1}} h(c(x^{k_j})) \subseteq \partial h(c(\bar{x})),
		$$
		we deduce that
		\begin{equation*}
			\bar{x} \in \mathcal{C},
			\quad g(\bar{x}) \ge 0,
			\quad \bar{u} \in \partial h(c(\bar{x})) \quad \text{and} \quad
			0 \in Dc(\bar{x})^*\bar{u} + \mathcal{N}_{\mathcal{C}}(\bar{x}),
		\end{equation*}
		which implies $0 \in \partial g(\bar{x})+ \mathcal{N}_{\mathcal{C}}(\bar{x})$ in view of \eqref{eq-part ghc},
		and hence contradicts Assumption~\ref{ass-cq}.
	\end{proof}
	
	Before ending this subsection, we present in the next proposition a concrete example that satisfies Assumption~\ref{ass-cq},
	and estimate the associated constant $\eta_1$ in Lemma~\ref{lm-eta-cq}.
	\begin{proposition}\label{prop-ex}
		Let $h:\mathbb{R}^{m} \to \mathbb{R}$ be defined by $h(y) \coloneqq \max_{i\in [m]} y_i$ for all $y\in \mathbb{R}^{m}$.
		Let $\mathcal{C}\subset \mathbb{R}^{n}$ be a compact and convex set containing the origin and ${\cal C}\neq \{0\}$.
		Let $H: \mathbb{R}^{n} \rightarrow \mathbb{R}^{m}$ be continuously differentiable on $\mathbb{R}^n$ and $L_{H}$-smooth on ${\cal C}$ with $L_{H}\ge 0$.
		Suppose that, for each $i \in [m]$,
		$H_i$ is positively homogeneous with degree $a_i\ge 1$, that is,
		$H_i(t x) = t^{a_i} H_i(x)$ for all $t \ge 0$ and $x \in \mathbb{R}^{n}$.
		Let $c(\cdot) \coloneq H(\cdot) - b$ with $b\in \mathbb{R}^{m}_{++}$.
		Then Assumption~\ref{ass-cq} holds.
		Moreover, the constant $\eta_1$ in Lemma~\ref{lm-eta-cq} can be chosen as
		\begin{equation}\label{eq-ex-eta-def}
			\eta_{1} = \min\, \left\{
			\frac{\min_{i\in [m]} a_i b_i}
			{2 \widetilde{m}_{\mathcal{C}}},
			\frac{\min_{i\in [m]} a_i b_i}
			{2(m+1)\max_{i\in [m]} a_i} \right\},
		\end{equation}
		where $\widetilde{m}_{\mathcal{C}} \coloneqq \sup_{x \in \mathcal{C}}\|x\| > 0$.
	\end{proposition}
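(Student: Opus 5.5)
The plan is to prove the quantitative bound \eqref{eq-cq-eta-lb} directly with $\eta_1$ as in \eqref{eq-ex-eta-def}. Assumption~\ref{ass-cq} then follows at once: for $x\in\mathcal{C}$ with $g(x)\ge 0\ge-\eta_1$, every $u\in\partial h(c(x))\subseteq\partial_{\eta_1}h(c(x))$ gives $\mathrm{dist}(0,Dc(x)^*u+\mathcal{N}_{\mathcal{C}}(x))\ge\eta_1>0$, and \eqref{eq-part ghc} then yields $0\notin\partial g(x)+\mathcal{N}_{\mathcal{C}}(x)$.

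\textbf{Main tool.} The key tool is Euler's identity for positively homogeneous differentiable functions, $\langle\nabla H_i(x),x\rangle=a_iH_i(x)$. It is paired with the observation that, since $0\in\mathcal{C}$, every $v\in\mathcal{N}_{\mathcal{C}}(x)$ satisfies $\langle v,0-x\rangle\le0$, i.e. $\langle v,x\rangle\ge0$.

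\textbf{Step 1: $x\neq 0$.} Fix $x\in\mathcal{C}$ with $g(x)\ge-\eta_1$. Since $a_ib_i/\max_j a_j\le b_i$, we have $\eta_1\le \min_i b_i/4<\min_ib_i$. At $x=0$ we have $H(0)=0$, so $g(0)=-\min_ib_i<-\eta_1$. Hence $x\neq0$ and $0<\|x\|\le\widetilde m_{\mathcal{C}}$.

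\textbf{Step 2: Cauchy--Schwarz reduction.} For any $w=Dc(x)^*u+v$ with $v\in\mathcal{N}_{\mathcal{C}}(x)$, Cauchy--Schwarz gives
\[
\|w\|\,\|x\|\ge\langle w,x\rangle\ge\sum_i u_i\langle\nabla H_i(x),x\rangle=\sum_i u_ia_iH_i(x).
\]
Hence it suffices to show $\sum_iu_ia_iH_i(x)\ge \eta_1\widetilde m_{\mathcal{C}}$. The middle inequality requires $u\ge0$, which Step 3 provides.

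\textbf{Step 3: describe $\partial_{\eta_1}h(y)$.} Here $y=c(x)$ and $h=\sigma_\Delta$ with $\Delta$ the unit simplex. The standard formula for the $\epsilon$-subdifferential of a support function gives $u\in\Delta$ with $\langle u,y\rangle\ge h(y)-\eta_1$. Since $y_i\le h(y)$ for every $i$, this yields $\sum_iu_i(h(y)-y_i)\le\eta_1$ with each term nonnegative.

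\textbf{Step 4: lower bound.} Write $H_i(x)=y_i+b_i$. Then
\[
\sum_iu_ia_iH_i(x)\ge\min_ia_ib_i+h(y)\sum_iu_ia_i-\sum_iu_ia_i\bigl(h(y)-y_i\bigr).
\]
Using $h(y)=g(x)\ge-\eta_1$, the nonnegativity of the terms in Step 3 and $\sum_i u_i=1$, this is at least $\min_ia_ib_i-2\max_ia_i\,\eta_1$. By the second term in \eqref{eq-ex-eta-def}, this is at least $\frac{m}{m+1}\min_ia_ib_i\ge\frac12\min_ia_ib_i$. By the first term in \eqref{eq-ex-eta-def}, $\frac12\min_ia_ib_i\ge\eta_1\widetilde m_{\mathcal{C}}$. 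Dividing by $\|x\|\le\widetilde m_{\mathcal{C}}$ gives $\|w\|\ge\eta_1$, which is \eqref{eq-cq-eta-lb}.

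\textbf{Expected obstacle.} The delicate point is Step 4. The weights $a_i$ differ, so $\sum_iu_ia_iy_i$ cannot be bounded directly by $\langle u,y\rangle$. The fix is to use the gaps $h(y)-y_i\ge0$ and their small $u$-weighted sum from Step 3. I also need to double-check the $\epsilon$-subdifferential characterization for the max function.
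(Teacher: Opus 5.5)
Your proposal is correct and follows essentially the same route as the paper: Euler's identity, $\langle v,x\rangle\ge 0$ for $v\in\mathcal{N}_{\mathcal{C}}(x)$ (since $0\in\mathcal{C}$), Cauchy--Schwarz against $\|x\|\le\widetilde m_{\mathcal{C}}$, and the simplex description of $\partial_{\eta_1}h(c(x))$. Your Step~4 bounds the gaps $h(y)-y_i$ in aggregate and so loses only $2\eta_1\max_i a_i$, sharper than the paper's termwise $(m+1)\eta_1\max_i a_i$. Step~1 is unnecessary (multiply by $\widetilde m_{\mathcal{C}}$ rather than divide by $\|x\|$), and the middle inequality in Step~2 does not actually need $u\ge 0$.
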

	\begin{proof}
		Let $\eta_1$ be given in \eqref{eq-ex-eta-def}.
		Let $x\in \mathcal{C}$ with $h(c(x)) \ge - \eta_1$.
		We claim that \eqref{eq-cq-eta-lb} holds.
		
		Indeed, from the definition of $\epsilon$-subdifferential,
		we see that any $u \in \partial_{\eta_1} h(c(x))$ must belong to
		$
		\{u \in \mathbb{R}^{m}_{+} : \sum_{i=1}^{m}u_i = 1\}
		$
		and satisfy
		\begin{equation*}
			\eta_1 \ge h(c(x)) - \langle u, c(x)\rangle
			= \sum_{i=1}^{m}u_i (h(c(x)) -  c_i(x)).
		\end{equation*}
		Since $h(c(x)) -  c_i(x) \ge 0$ for all $i\in [m]$,
		the last display yields that
		\begin{equation}\label{eq-u-G-lb}
			u_i c_i(x) \ge u_i h(c(x)) - \eta_1 \ge - (1+u_i)\eta_1 \quad \forall i \in [m],
		\end{equation}
		where the last inequality holds because
		$h(c(x)) \ge -\eta_1$
		and $u\in \mathbb{R}^{m}_{+}$.
		
		Next, since $c(\cdot) = H(\cdot) - b$, we have $D c = D H$,
		which implies that
		\begin{equation}\label{eq-u-DG-x-lbd}
			\begin{split}
				& \langle u, Dc(x)x \rangle
				= \sum_{i \in [m]} u_i D H_i(x) x
				\overset{\mathrm{(a)}}{=} \sum_{i \in [m]} u_i a_i H_i(x)
				= \sum_{i \in [m]} u_i a_i (c_i(x) + b_i)
				\\ & \overset{\mathrm{(b)}}{\ge}  \sum_{i \in [m]}  a_i
				u_i b_i - \eta_1\sum_{i \in [m]} a_i (1+u_i)
				\overset{\mathrm{(c)}}{\ge} \min_{i\in [m]} a_i b_i -  (m+1)  \eta_1\max_{i\in [m]}a_i
				\overset{\mathrm{(d)}}{\ge} \tfrac{1}{2}\min_{i\in [m]}a_i b_i,
			\end{split}
		\end{equation}
		where (a) follows from Euler’s homogeneous function theorem (see, e.g., \cite[Theorem~16.5]{26Lukac}),
		(b) holds because of \eqref{eq-u-G-lb},
		(c) holds because $\sum_{i\in [m]} u_i = 1$ and $u \ge 0$,
		and (d) follows from the definition of $\eta_1$ in \eqref{eq-ex-eta-def}.
		
		Now, since $x\in \mathcal{C}$, we have, for any $\xi \in \mathcal{N}_{\mathcal{C}}(x)$,
		\begin{equation*}
			\begin{split}
				& \widetilde{m}_{\mathcal{C}} \|Dc(x)^*u + \xi\|   \ge \|Dc(x)^*u + \xi\| \|x\|  \ge \langle Dc(x)^*u + \xi, x\rangle
				\\ & = \langle u, Dc(x) x\rangle + \langle \xi, x\rangle \ge \langle u, Dc(x)x \rangle
				\ge  \tfrac{1}{2}\min_{i\in [m]} a_i b_i,
			\end{split}
		\end{equation*}
		where the third inequality
		holds because $\langle \xi, x\rangle \ge 0$ (since $\xi \in {\cal N}_{\cal C}(x)$ and $0\in {\cal C}$),
		and the last inequality follows from \eqref{eq-u-DG-x-lbd}.
		This implies that $\mathrm{dist}(0, Dc(x)^*u + \mathcal{N}_{\mathcal{C}}(x)) \ge \eta_{1}$, and hence \eqref{eq-cq-eta-lb} holds.
		
		In particular, we have also shown that \eqref{eq-infeas-cq} holds.
		This completes the proof.
	\end{proof}

\subsection{Smoothing approximation}
The following definition of smoothing approximation is taken from \cite[Definition~2.1]{12BT}, with minor changes in notation for the convenience of our analysis. Particularly, we explicitly include the possibly negative (sub)parameters $\alpha_{3, 1}^{\phi}$ and $\alpha_{3,2}^{\phi}$ in the specification of smoothing approximations, which will appear in our analysis in Sections~\ref{sec3} and~\ref{sec4}.
	
	\begin{definition}[Smoothing approximation]
		\label{def-sm}
		Let $\phi:{\mathbb Y}\to \mathbb{R}$ be a convex function and let $\alpha_1^{\phi}\ge 0$, $\alpha_2^{\phi} > 0$ and $\alpha_{3, 1}^{\phi}\in \mathbb{R}$ and $\alpha_{3,2}^{\phi}\in \mathbb{R}$ satisfy
		$\alpha_{3}^{\phi} \coloneqq \alpha_{3,1}^{\phi} + \alpha_{3,2}^{\phi}> 0$.
		The function $\phi$ is said to be $(\alpha_{1}^{\phi}, \alpha_{2}^{\phi}, \alpha_{3,1}^{\phi},\alpha_{3,2}^\phi)$-smoothable
		if for every $\mu>0$ there exists a convex function $\phi_{\mu} \in C^{1}({\mathbb Y})$
		such that
		\begin{enumerate}[{\rm(i)}]
			\item $\nabla \phi_{\mu}$ is $(\alpha_{1}^{\phi} + \alpha_{2}^{\phi} / \mu)$-Lipschitz continuous;
			
			\item $\phi(y) - \alpha_{3,1}^{\phi} \mu \leq \phi_{\mu}(y) \leq \phi(y) +  \alpha_{3,2}^{\phi} \mu$
			for every $y \in \mathbb{Y}$;
		\end{enumerate}
		Such a family $\{\phi_{\mu}\}_{\mu>0}$ is called a smoothing approximation
		(SA) of $\phi$ with parameters $(\alpha_1^{\phi}, \alpha_2^{\phi}, \alpha_{3, 1}^{\phi}, \alpha_{3, 2}^{\phi})$.
	\end{definition}
	
	The above notion of SA has been widely studied and adopted in the literature. For example, a special case of SA with $\alpha_{3,1}^{\phi} = 0$ was adopted in \cite[Definition~2.1]{25XPS}, and some notions of optimal smoothing were also proposed and studied in \cite{25SG} based on the above definition of SA. The next proposition discusses how an SA inherits certain properties of the original function.
	
	\begin{proposition}\label{prop-eps-subdiff}
		Let $\phi:{\mathbb Y}\rightarrow \mathbb{R}$ be a convex function.
		Let $\{\phi_{\mu}\}_{\mu>0}$ be an SA of
		$\phi$ with parameters $(\alpha_{1}^{\phi},\, \alpha_{2}^{\phi},\, \alpha_{3,1}^{\phi}, \alpha_{3, 2}^{\phi})$
		and $\alpha_3^{\phi} \coloneqq \alpha_{3,1}^{\phi} +\alpha_{3, 2}^{\phi}> 0$.
		Then
		\begin{align}\label{eq-nabla-h-mu-incl}
			& \nabla \phi_{\mu}(y) \in \partial_{\alpha_3^{\phi} \mu} \phi (y)
			\quad \forall y \in \mathbb{Y}, \mu>0,
			\\ &  \phi_{\mu}^{\infty} = \phi^{\infty}\quad \forall \mu>0. \label{eq-hmu-hzn}
		\end{align}
		Moreover, if $\phi$ is $M_{\phi}$-Lipschitz continuous for some $M_\phi\ge 0$, then
		$\phi_{\mu}$ is also $M_{\phi}$-Lipschitz continuous for every $\mu > 0$.
	\end{proposition}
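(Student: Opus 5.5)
We prove the three assertions separately, using only convexity of $\phi_\mu$, the sandwich inequality in Definition~\ref{def-sm}(ii), and elementary facts about horizon functions of convex functions.

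\emph{Step 1: the inclusion \eqref{eq-nabla-h-mu-incl}.} Fix $\mu>0$ and $y\in\mathbb{Y}$. Since $\phi_\mu$ is convex and differentiable, the gradient inequality gives $\phi_\mu(z)\ge \phi_\mu(y)+\langle\nabla\phi_\mu(y),z-y\rangle$ for all $z\in\mathbb{Y}$. Definition~\ref{def-sm}(ii) yields $\phi(z)\ge\phi_\mu(z)-\alpha_{3,2}^\phi\mu$ and $\phi_\mu(y)\ge \phi(y)-\alpha_{3,1}^\phi\mu$. Chaining these, we get
\[
\phi(z)\ge \phi(y)+\langle\nabla\phi_\mu(y),z-y\rangle-(\alpha_{3,1}^\phi+\alpha_{3,2}^\phi)\mu \quad \forall z\in\mathbb{Y}.
\]
This is precisely the statement $\nabla\phi_\mu(y)\in\partial_{\alpha_3^\phi\mu}\phi(y)$. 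The signs of $\alpha_{3,1}^\phi$ and $\alpha_{3,2}^\phi$ play no role here; only their sum appears.

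\emph{Step 2: the horizon identity \eqref{eq-hmu-hzn}.} For a proper closed convex function $\psi$ that is finite at $0$, we use the standard formula $\psi^\infty(d)=\lim_{t\to\infty}\bigl(\psi(td)-\psi(0)\bigr)/t$ (e.g., \cite[Theorem~3.21]{98RW}); any base point in the domain works, and both functions here are finite everywhere. Applying Definition~\ref{def-sm}(ii) at $td$ and at $0$, the difference quotients of $\phi_\mu$ and $\phi$ differ by at most $(|\alpha_{3,1}^\phi|+|\alpha_{3,2}^\phi|)\mu/t$. This error tends to $0$ as $t\to\infty$, so the two limits coincide in $(-\infty,\infty]$. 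Hence $\phi_\mu^\infty=\phi^\infty$. Alternatively, one can note that the epigraphs lie within vertical distance $O(\mu)$ of each other, so their recession cones coincide.

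\emph{Step 3: the Lipschitz claim.} If $\phi$ is $M_\phi$-Lipschitz, then $\phi^\infty(d)\le M_\phi\|d\|$, because $(\phi(td)-\phi(0))/t\le M_\phi\|d\|$. By Step 2, the same bound holds for $\phi_\mu^\infty$. For convex $\phi_\mu$, the gradient inequality applied along $y+td$ gives
\[
\langle\nabla\phi_\mu(y),d\rangle\le \frac{\phi_\mu(y+td)-\phi_\mu(y)}{t}\uparrow\phi_\mu^\infty(d)\le M_\phi\|d\|.
\]
Taking $d=\nabla\phi_\mu(y)$ gives $\|\nabla\phi_\mu(y)\|\le M_\phi$ for every $y$. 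The mean value theorem then yields that $\phi_\mu$ is $M_\phi$-Lipschitz.

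\emph{Main obstacle.} The only delicate point is Step 2: one must justify the limit formula for the horizon function and the fact that it is independent of the base point. This is where I would cite \cite{98RW} rather than reprove it. An alternative route for Step 3, via Step 1, gives only $\|\nabla\phi_\mu(y)\|\le M_\phi$ up to an additive $\epsilon$-error that vanishes as $\|z-y\|\to\infty$, which again amounts to the same horizon argument.
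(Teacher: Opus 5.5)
Your proof is correct. Steps 1 and 2 follow the paper's argument: the gradient inequality chained with the sandwich bound, then dividing the sandwich by $t$ and invoking \cite[Theorem~3.21]{98RW}.

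Step 3 takes a different route. The paper derives the Lipschitz claim from the inclusion \eqref{eq-nabla-h-mu-incl}. It cites \cite{02Zalinescu} for $\partial_{\epsilon}\phi(y)\subseteq \mathrm{dom}\,\phi^*\subseteq \mathrm{cl}(\mathrm{Im}\,\partial\phi)$. It then uses that all subgradients of an $M_\phi$-Lipschitz convex function lie in the $M_\phi$-ball, so every $\epsilon$-subgradient, in particular $\nabla\phi_\mu(y)$, has norm at most $M_\phi$. You derive it instead from the horizon identity \eqref{eq-hmu-hzn}, via $\langle\nabla\phi_\mu(y),d\rangle\le\phi_\mu^\infty(d)=\phi^\infty(d)\le M_\phi\|d\|$.

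Your version needs nothing beyond the difference-quotient formula already used in Step 2, together with its base-point independence, and it avoids conjugate duality. The paper's version gives, as a by-product, the smoothing-independent fact that all $\epsilon$-subgradients of a Lipschitz convex function are bounded by $M_\phi$, for every $\epsilon\ge 0$.

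Your closing remark undersells the route through Step 1. Take $u=\nabla\phi_\mu(y)\neq 0$ and $z=y+tu/\|u\|$. Then \eqref{eq-nabla-h-mu-incl} and Lipschitz continuity give $t\|u\|-\alpha_3^{\phi}\mu\le\phi(z)-\phi(y)\le M_\phi t$, i.e.\ $\|u\|\le M_\phi+\alpha_3^{\phi}\mu/t$ for all $t>0$. Letting $t\to\infty$ yields exactly $\|u\|\le M_\phi$, with no residual error. This is an elementary, citation-free form of the paper's argument and needs no horizon functions.
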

	\begin{proof}
		For every $\mu>0$ and $y\in\mathbb{Y}$, using the convexity of $\phi_{\mu}$, we have
		\begin{equation}\label{eq-subdiff}
			\phi_{\mu}(w) \ge \phi_{\mu}(y) + \langle \nabla \phi_{\mu}(y), w - y\rangle\quad \forall\, w \in \mathbb{Y}.
		\end{equation}
		From Definition~\ref{def-sm}(ii), it follows that
		$$\phi_{\mu}(w) \le \phi(w) + 	\alpha_{3,2}^{\phi} \mu
		\quad \text{and} \quad
		\phi(y) - \alpha_{3,1}^{\phi} \mu \le \phi_{\mu}(y),
		$$
		which, together with \eqref{eq-subdiff}, gives
		\begin{align*}
			\phi(w) & \ge \phi(y) + \langle \nabla \phi_{\mu}(y), w - y \rangle - (\alpha_{3, 1}^{\phi} + \alpha_{3, 2}^{\phi}) \mu
			\quad \forall\, w \in \mathbb{Y}.
		\end{align*}
		This shows $\nabla \phi_{\mu}(y) \in \partial_{\alpha_3^{\phi} \mu} \phi (y)$.
		
		Next, using Definition~\ref{def-sm}(ii) again, we have for every $\mu> 0$ and $y\in \mathbb{Y}$,
		\begin{equation*}
			\frac{\phi(ty)- \alpha_{3,1}^{\phi}\mu}{t}  \leq \frac{ \phi_{\mu}(ty)}{t} \leq \frac{\phi(ty) +  \alpha_{3,2}^{\phi} \mu}{t} \quad \forall t>0.
		\end{equation*}
		Passing to the limit as $t\to \infty$ in the above display and invoking \cite[Theorem~3.21]{98RW},
		we get \eqref{eq-hmu-hzn}.
		
		Finally, according to \cite[Theorem~2.4.2(iv)]{02Zalinescu},
		we have
		\begin{equation*}
			\partial_{\epsilon} \phi(y) \subseteq \mathrm{dom}\, \phi^* \subseteq {\rm cl}(\mathrm{Im}\, \partial \phi)
			\quad \forall \epsilon\ge 0,\, y\in \mathbb{Y},
		\end{equation*}
		where the second inclusion follows from \cite[Theorem~3.1.2]{02Zalinescu}, with ${\rm Im}\,\partial \phi\coloneqq \bigcup_{x\in {\rm dom}\,\phi}\partial\phi(x)$.
		Therefore, if $\phi$ is $M_{\phi}$-Lipschitz continuous,
		then we have
		\begin{equation*}
			\sup_{u \in \partial_{\epsilon} \phi(y)} \| u \| \le
			\sup_{u \in {\rm cl}(\mathrm{Im}\, \partial \phi)} \| u \| \le
			M_{\phi} \quad \forall y \in \mathbb{Y},\, \epsilon \ge 0.
		\end{equation*}
		The claimed Lipschitz continuity of $\phi_\mu$ now follows from the above display and \eqref{eq-nabla-h-mu-incl}.
	\end{proof}

We next recall a way of constructing SAs for globally Lipschitz convex functions based on the Moreau envelope. Such construction also leads to the important inequality \eqref{Moreau_inequality} for our subsequent analysis; this inequality has been instrumental in the analysis of various variable smoothing methods in the literature, see, e.g., \cite{18TDFC,20BB}.
\begin{proposition}[The Moreau proximal smoothing]\label{cor-mps}
    Let $\phi:\mathbb{Y}\to \mathbb{R}$ be a convex function that is $M_\phi$-Lipschitz continuous with $M_\phi > 0$.
	For every $\mu>0$, define
	\begin{equation*}
		\phi_{\mu}(y)  \coloneqq \inf_{w\in \mathbb{Y}}
		\left\{ \phi(w) + \tfrac{1}{2\mu}\|y - w\|^2\right\}
		\quad \forall y \in \mathbb{Y}.
	\end{equation*}
	Then, $\{\phi_{\mu}\}_{\mu>0}$ is an SA of $\phi$ with parameters $(\alpha_1^{\phi}, \alpha_2^{\phi}, \alpha_{3,1}^{\phi},\alpha_{3,2}^{\phi})=(0,1,\tfrac{1}{2}M_{\phi}^2, 0)$ and
    \begin{equation}\label{Moreau_inequality}
			\phi_{\mu_{1}}(y) \le \phi_{\mu_{0}}(y) + \tfrac{1}{2}M_{\phi}^2 (\mu_{0}-\mu_{1}) \quad \forall \mu_{0}\ge \mu_{1}>0, y\in \mathbb{Y}.
	\end{equation}
\end{proposition}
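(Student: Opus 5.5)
The plan is to verify the two requirements of Definition~\ref{def-sm} for the Moreau envelope directly, and then obtain \eqref{Moreau_inequality} by viewing $\phi_\mu(y)$ as a function of $\mu$.

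\emph{Convexity and smoothness.} First, $\phi_\mu$ is the infimal convolution of the convex function $\phi$ with the convex quadratic $\tfrac{1}{2\mu}\|\cdot\|^2$, so it is convex and finite-valued. Its gradient is the standard one,
\[
\nabla\phi_\mu(y) = \tfrac{1}{\mu}\bigl(y-\mathrm{prox}_{\mu\phi}(y)\bigr),
\]
see, e.g., \cite[Theorem~2.26]{98RW}. Because $\mathrm{prox}_{\mu\phi}$ is firmly nonexpansive, $I-\mathrm{prox}_{\mu\phi}$ is also firmly nonexpansive, and in particular $1$-Lipschitz. Hence $\nabla\phi_\mu$ is $\tfrac1\mu$-Lipschitz, which is condition (i) with $\alpha_1^\phi=0$ and $\alpha_2^\phi=1$.

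\emph{Approximation bounds.} Taking $w=y$ in the infimum gives $\phi_\mu(y)\le\phi(y)$, so $\alpha_{3,2}^\phi=0$. For the lower bound, use the Lipschitz continuity of $\phi$: for every $w$,
\[
\phi(w)+\tfrac1{2\mu}\|y-w\|^2 \ge \phi(y) - M_\phi t + \tfrac{t^2}{2\mu}, \qquad t\coloneqq\|y-w\|.
\]
Minimizing the right-hand side over $t\ge0$ gives the value $\phi(y)-\tfrac{\mu}{2}M_\phi^2$. Therefore $\phi_\mu(y)\ge\phi(y)-\tfrac12M_\phi^2\mu$, so $\alpha_{3,1}^\phi=\tfrac12M_\phi^2$. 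Since $M_\phi>0$, we get $\alpha_3^\phi=\tfrac12M_\phi^2>0$, and all the requirements of Definition~\ref{def-sm} hold.

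\emph{The inequality \eqref{Moreau_inequality}.} Fix $y$ and set $\lambda\coloneqq 1/\mu$. Then
\[
\phi_\mu(y)=\inf_w\bigl\{\phi(w)+\tfrac{\lambda}{2}\|y-w\|^2\bigr\}
\]
is an infimum of affine functions of $\lambda$, hence concave in $\lambda$. I would instead argue directly in $\mu$, via a derivative bound. Let $\rho(\mu)\coloneqq\phi_\mu(y)$ and $w_\mu\coloneqq\mathrm{prox}_{\mu\phi}(y)$.

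\begin{enumerate}[(a)]
\item \emph{Monotonicity.} Each term $\tfrac1{2\mu}\|y-w\|^2$ decreases in $\mu$, so $\rho$ is nonincreasing.
\item \emph{Difference bound.} For $\mu_0\ge\mu_1$, evaluate the infimum defining $\rho(\mu_1)$ at the point $w_{\mu_0}$:
\[
\rho(\mu_1)\le \phi(w_{\mu_0})+\tfrac1{2\mu_1}\|y-w_{\mu_0}\|^2 = \rho(\mu_0)+\tfrac12\|y-w_{\mu_0}\|^2\Bigl(\tfrac1{\mu_1}-\tfrac1{\mu_0}\Bigr).
\]
\item \emph{Gradient bound.} Since $\|y-w_{\mu_0}\|=\mu_0\|\nabla\phi_{\mu_0}(y)\|$, and Proposition~\ref{prop-eps-subdiff} gives $\|\nabla\phi_{\mu_0}(y)\|\le M_\phi$, we get $\|y-w_{\mu_0}\|\le\mu_0M_\phi$. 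This yields
\[
\rho(\mu_1)\le\rho(\mu_0)+\tfrac12M_\phi^2\,\mu_0^2\Bigl(\tfrac1{\mu_1}-\tfrac1{\mu_0}\Bigr) = \rho(\mu_0)+\tfrac12M_\phi^2\,\tfrac{\mu_0}{\mu_1}(\mu_0-\mu_1).
\]
\end{enumerate}

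\emph{Main obstacle.} The bound in step (c) carries an extra factor $\mu_0/\mu_1\ge1$, so it is too weak by itself. To remove it, I would apply the same argument with $\mu_1$ and $\mu$ replaced by $\mu$ and $\mu+h$ and let $h\downarrow0$. This gives a one-sided derivative bound $-\rho'(\mu)\le\tfrac12M_\phi^2$. Alternatively, the envelope formula gives $\rho'(\mu)=-\tfrac{1}{2\mu^2}\|y-w_\mu\|^2$ directly, and this lies in $[-\tfrac12M_\phi^2,0]$.

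To turn the derivative bound into \eqref{Moreau_inequality}, I need $\rho$ to be locally Lipschitz on $(0,\infty)$, so that integrating $\rho'$ over $[\mu_1,\mu_0]$ recovers $\rho(\mu_1)-\rho(\mu_0)$. This follows from the concavity of $\rho$ in $\lambda=1/\mu$, since a finite concave function is locally Lipschitz and $\lambda\mapsto 1/\lambda$ is smooth on $(0,\infty)$. Integrating then gives $\rho(\mu_1)-\rho(\mu_0)\le\tfrac12M_\phi^2(\mu_0-\mu_1)$, which is \eqref{Moreau_inequality}.
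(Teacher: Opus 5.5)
Your proof is correct, and it takes a different, self-contained route. The paper proves nothing here itself: it cites \cite[Lemma~4.2, Corollary~4.1]{12BT} for the SA parameters and \cite[Lemma~2.6]{20BB} (see also \cite[Lemma~10]{18TDFC}) for \eqref{Moreau_inequality}. Your direct check of Definition~\ref{def-sm} reproduces what the paper imports from \cite{12BT}.

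For \eqref{Moreau_inequality} you take the long way, and every step of it is valid:
\begin{enumerate}[(1)]
\item a primal upper bound that carries the lossy factor $\mu_0/\mu_1$;
\item a limit $h\downarrow0$, which strictly bounds the lower right Dini derivative of $\rho$ (this coincides with $\rho'$ wherever $\rho$ is differentiable);
\item local Lipschitzness of $\rho$ from its concavity in $1/\mu$;
\item integration over $[\mu_1,\mu_0]$.
\end{enumerate}

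The factor disappears if you bound the infimum for $\mu_0$ from below, rather than inserting a point into the infimum for $\mu_1$. Use the affine minorant of $\phi$ at $w_{\mu_1}$: with $u_1\coloneqq\nabla\phi_{\mu_1}(y)=(y-w_{\mu_1})/\mu_1\in\partial\phi(w_{\mu_1})$,
\[
\phi_{\mu_0}(y)\ge\inf_{w\in\mathbb{Y}}\Bigl\{\phi(w_{\mu_1})+\langle u_1,w-w_{\mu_1}\rangle+\tfrac{1}{2\mu_0}\|y-w\|^2\Bigr\}=\phi_{\mu_1}(y)-\tfrac{\mu_0-\mu_1}{2}\|u_1\|^2,
\]
and $\|u_1\|\le M_\phi$ finishes. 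Equivalently, $\mu\mapsto\phi_\mu(y)=\max_u\{\langle u,y\rangle-\phi^*(u)-\tfrac{\mu}{2}\|u\|^2\}$ is convex as a maximum of affine functions of $\mu$, and the display is its tangent inequality at $\mu_1$.

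That route takes one line, needs no regularity argument, and gives the sharper, gradient-dependent bound $\phi_{\mu_1}(y)\le\phi_{\mu_0}(y)+\tfrac{\mu_0-\mu_1}{2}\|\nabla\phi_{\mu_1}(y)\|^2$. Yours uses only elementary primal facts but pays with the integration step. You could also avoid that step by telescoping your step (c) over a fine partition of $[\mu_1,\mu_0]$, since the ratios of consecutive nodes then tend to $1$.
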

\begin{proof}
  The fact that $\{\phi_\mu\}_{\mu > 0}$ is an SA with the prescribed parameters follows from \cite[Lemma~4.2]{12BT} and \cite[Corollary~4.1]{12BT}. The inequality \eqref{Moreau_inequality} was proved in \cite[Lemma~2.6]{20BB}; see also \cite[Lemma~10]{18TDFC}.
\end{proof}

In our algorithmic development, we need to make use of SAs of $h$ in \eqref{opt-dc-hc} (under Assumption~\ref{ass-gen}) and the sublinear function $[\cdot]_+$ that satisfy the additional inequality \eqref{Moreau_inequality}; the existence of such SAs is guaranteed by Proposition~\ref{cor-mps}. For ease of reference, we discuss the details in the next remark.
\begin{remark}[{SAs for $h$ in \eqref{opt-dc-hc} and $[\cdot]_+$}]\label{ass-sm}
For the rest of this paper, under Assumption~\ref{ass-gen}, we let $\{h_{\mu}\}_{\mu>0}$ be an SA of $h$ in \eqref{opt-dc-hc} with parameters $(\alpha_1^{h}, \alpha_2^{h}, \alpha_{3,1}^{h},\alpha_{3,2}^{h})$ and $\alpha_{3}^{h} \coloneqq \alpha_{3, 1}^{h}+\alpha_{3, 2}^{h}> 0$ such that there exists a constant $\alpha_{4}^{h}\ge 0$ satisfying
		\begin{equation}\label{eq-gmu-upper-Lip-cont}
			h_{\mu_{1}}(y) \le h_{\mu_{0}}(y) + \alpha_{4}^{h} (\mu_{0}-\mu_{1}) \quad \forall \mu_{0}\ge \mu_{1}>0, y\in \mathbb{Y};
		\end{equation}
we also let $\{\varphi_{\mu}\}_{\mu>0}$ be an SA of the function $[\cdot]_{+}$ with parameters
		$(\alpha_1^{\varphi}, \alpha_2^{\varphi}, \alpha_{3, 1}^{\varphi}, \alpha_{3,2}^{\varphi})$ and $\alpha_{3}^{\varphi} \coloneqq \alpha_{3, 1}^{\varphi}+\alpha_{3, 2}^{\varphi}> 0$ such that there exists a constant $\alpha_{4}^{\varphi}\ge 0$ satisfying
		\begin{equation}\label{eq-varphi-upper-Lip-cont}
			\varphi_{\mu_{1}}(s) \le \varphi_{\mu_{0}}(s) + \alpha_{4}^{\varphi} (\mu_{0}-\mu_{1}) \quad \forall \mu_{0}\ge \mu_{1}>0, s\in \mathbb{R}.
		\end{equation}
The existence of such SAs follows immediately from Proposition~\ref{cor-mps}. In addition, for the SA $\{\varphi_{\mu}\}_{\mu>0}$, according to Proposition~\ref{prop-eps-subdiff}, we have for every $\mu> 0$,
    \begin{equation}\label{eq-eps-subd-ReLU}
        \varphi_{\mu}^{\prime}(s) \in \partial_{\alpha_3^{\varphi}\mu} [s]_{+}
        = \begin{cases}
            [\max\{0, 1 - \alpha_3^{\varphi}\mu/s\}, 1] & \mbox{if}\ s>0,
            \\ [0, \min\{1, -\alpha_3^{\varphi}\mu/s\}] & \mbox{if}\ s<0,
            \\ [0, 1] & \mbox{if}\ s = 0.
        \end{cases}
    \end{equation}
\end{remark}

\begin{remark}[Computability of SAs]
  We briefly comment on the computability of the SAs mentioned in Remark~\ref{ass-sm}, i.e., whether one can construct SAs whose objective values and gradients are efficiently computable. In this regard, if the proximal mapping of $\mu h$ can be efficiently computed for all $\mu > 0$, then according to Proposition~\ref{cor-mps}, the required SAs $\{h_\mu\}_{\mu > 0}$ and $\{\varphi_\mu\}_{\mu > 0}$ can be constructed respectively as Moreau envelopes of $\mu h$ and $\mu [\cdot]_+$ for $\mu > 0$ and therefore can be efficiently computed.
\end{remark}

The next lemma shows that the composition of the SA $\{\varphi_\mu\}_{\mu > 0}$ with the SA $\{h_\mu\}_{\mu > 0}$ from Remark~\ref{ass-sm} gives an SA of $h^+$.

\begin{lemma}[An SA for $h^+$]\label{lm-Hmu}
	Consider \eqref{opt-dc-hc}, and suppose
	that Assumption~\ref{ass-gen} holds. Let $\{\varphi_\mu\}_{\mu > 0}$ and $\{h_\mu\}_{\mu > 0}$ be the SA given in Remark~\ref{ass-sm}.
	Define
	\begin{equation}\label{eq-H-mu-def}
		H_{\mu} \coloneqq \varphi_{\mu} \circ h_{\mu} \quad \forall \mu>0.
	\end{equation}
	Then, $\{H_{\mu}\}_{\mu>0}$ is an SA of $h^{+}$ with parameters $(\alpha_1, \alpha_2, \alpha_{3, 1}, \alpha_{3,2})$, and satisfies
    \begin{equation*}
		H_{\mu_{1}}(y) \le H_{\mu_{0}}(y) + \alpha_{4} (\mu_{0}-\mu_{1}) \quad \forall \mu_{0}\ge \mu_{1}>0, y\in \mathbb{Y},
	\end{equation*}
    where
	\begin{align}\label{eq-def-alpha}
		& \alpha_{1} = \alpha_{1}^{h} + (M_{h})^2 \alpha_{1}^{\varphi},
		\quad \alpha_2 = \alpha_2^{h} +  (M_{h})^2 \alpha_2^{\varphi},
		\quad \alpha_{3, 1} = [\alpha_{3, 1}^{h}]_{+} + \alpha_{3, 1}^{\varphi},
        \\ & \alpha_{3, 2} = [\alpha_{3, 2}^{h}]_{+} + \alpha_{3, 2}^{\varphi},
        \quad \alpha_3 = [\alpha_{3, 1}^{h}]_{+}
        +  [\alpha_{3, 2}^{h}]_{+} + \alpha_{3}^{\varphi} >0
        \quad \mbox{and} \quad
        \alpha_4 = \alpha_{4}^{h}  + \alpha_4^{\varphi}. \label{eq-def-alpha4}
	\end{align}
\end{lemma}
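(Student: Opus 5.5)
We verify, for each $\mu>0$, the requirements of Definition~\ref{def-sm} for $H_\mu=\varphi_\mu\circ h_\mu$ and the target $h^+$, and then the monotonicity-type inequality. The facts used about $\varphi_\mu$ are: it is convex, nondecreasing, $1$-Lipschitz, and has $\varphi_\mu'(s)\in[0,1]$. These follow from \eqref{eq-eps-subd-ReLU} and Proposition~\ref{prop-eps-subdiff}, since $[\cdot]_+$ is $1$-Lipschitz. Using Proposition~\ref{prop-eps-subdiff} again, $h_\mu$ is convex, $C^1$ and $M_h$-Lipschitz, so $\|\nabla h_\mu\|\le M_h$.

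\textbf{Convexity and smoothness.} A nondecreasing convex function composed with a convex function is convex, so $H_\mu$ is convex. It is $C^1$ with $\nabla H_\mu(y)=\varphi_\mu'(h_\mu(y))\nabla h_\mu(y)$. For the Lipschitz bound we split
\[
\nabla H_\mu(y)-\nabla H_\mu(w)=\varphi_\mu'(h_\mu(y))\bigl(\nabla h_\mu(y)-\nabla h_\mu(w)\bigr)+\bigl(\varphi_\mu'(h_\mu(y))-\varphi_\mu'(h_\mu(w))\bigr)\nabla h_\mu(w).
\]
The first term is at most $(\alpha_1^h+\alpha_2^h/\mu)\|y-w\|$ because $0\le\varphi'_\mu\le 1$. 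The second term is at most $(\alpha_1^\varphi+\alpha_2^\varphi/\mu)M_h\cdot M_h\|y-w\|$, using the Lipschitz continuity of $\varphi_\mu'$, the $M_h$-Lipschitz continuity of $h_\mu$, and $\|\nabla h_\mu\|\le M_h$. Adding the two bounds gives exactly $\alpha_1+\alpha_2/\mu$.

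\textbf{Approximation bounds.} For the upper bound we write
\[
H_\mu(y)\le [h_\mu(y)]_+ +\alpha_{3,2}^\varphi\mu\le [h(y)+\alpha_{3,2}^h\mu]_+ +\alpha^\varphi_{3,2}\mu\le h^+(y)+([\alpha_{3,2}^h]_+ +\alpha_{3,2}^\varphi)\mu .
\]
This uses that $[\cdot]_+$ is monotone and subadditive, and that $[a+t]_+\le[a]_++[t]_+$. For the lower bound, $H_\mu(y)\ge [h_\mu(y)]_+-\alpha_{3,1}^\varphi\mu$ and $h_\mu(y)\ge h(y)-\alpha_{3,1}^h\mu\ge h(y)-[\alpha_{3,1}^h]_+\mu$. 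Since $[\cdot]_+$ is monotone and $1$-Lipschitz, $[h_\mu(y)]_+\ge h^+(y)-[\alpha^h_{3,1}]_+\mu$. This yields $\alpha_{3,1}$, and $\alpha_3>0$ holds because $\alpha_3^\varphi>0$.

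The positive parts are the delicate point here. The parameters $\alpha_{3,i}^h$ may be negative, and a negative shift does not transfer through $[\cdot]_+$. Replacing them by their positive parts is precisely what makes the displayed constants valid.

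\textbf{Monotonicity inequality.} For $\mu_0\ge\mu_1$ we first apply \eqref{eq-gmu-upper-Lip-cont}, then the monotonicity and $1$-Lipschitz continuity of $\varphi_{\mu_1}$, and then \eqref{eq-varphi-upper-Lip-cont}:
\[
H_{\mu_1}(y)=\varphi_{\mu_1}(h_{\mu_1}(y))\le\varphi_{\mu_1}(h_{\mu_0}(y)+\alpha_4^h(\mu_0-\mu_1))\le \varphi_{\mu_1}(h_{\mu_0}(y))+\alpha_4^h(\mu_0-\mu_1)\le H_{\mu_0}(y)+(\alpha_4^h+\alpha_4^\varphi)(\mu_0-\mu_1).
\]

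The step I expect to be the main obstacle is justifying the properties of $\varphi_\mu$: that it is nondecreasing with derivative in $[0,1]$. I would derive this from \eqref{eq-eps-subd-ReLU}, since $\partial_\epsilon[\cdot]_+(s)\subseteq[0,1]$ for every $s$, so $\varphi_\mu'\in[0,1]$, which gives both monotonicity and $1$-Lipschitz continuity.
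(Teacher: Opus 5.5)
Your proof is correct and follows essentially the same route as the paper: the same splitting of $\nabla H_\mu(y)-\nabla H_\mu(w)$, the same chain of inequalities through $[\cdot]_+$ (with positive parts) for the approximation bounds, and the same use of \eqref{eq-gmu-upper-Lip-cont} and \eqref{eq-varphi-upper-Lip-cont} for the final inequality. The differences are cosmetic. You explicitly check convexity and $C^1$-smoothness of $H_\mu$, which the paper leaves implicit. In the last step you use monotonicity plus $1$-Lipschitz continuity of $\varphi_{\mu_1}$, whereas the paper uses the gradient inequality $\varphi_{\mu_1}(a)\le\varphi_{\mu_1}(b)+\varphi_{\mu_1}'(a)(a-b)$ together with $\varphi_{\mu_1}'\in[0,1]$.
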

\begin{proof}
	It holds that, for every $y\in \mathbb{Y},\, w \in \mathbb{Y}$ and $\mu>0$,
	\begin{align*}
		& \| \nabla H_{\mu}(y) - \nabla H_{\mu}(w) \|
		= \| \varphi_{\mu}^{\prime} ( h_{\mu}(y) ) \nabla h_{\mu}(y) - \varphi_{\mu}^{\prime} ( h_{\mu}(w) ) \nabla h_{\mu}(w) \|
		\\& \le | \varphi_{\mu}^{\prime} ( h_{\mu}(y) )|  \|  \nabla h_{\mu}(y) -  \nabla h_{\mu}(w)\|
		+  \|\nabla h_{\mu}(w)\| |\varphi_{\mu}^{\prime} ( h_{\mu}(y) ) - \varphi_{\mu}^{\prime} ( h_{\mu}(w) ) |
		\\& \le  \|  \nabla h_{\mu}(y) -  \nabla h_{\mu}(w)\| + M_{h} |\varphi_{\mu}^{\prime} ( h_{\mu}(y) ) - \varphi_{\mu}^{\prime} ( h_{\mu}(w) ) |
		\\& \le
		(\alpha_{1}^{h} + \alpha_2^{h} /\mu ) \| y - w \| + M_{h} (\alpha_{1}^{\varphi} + \alpha_2^{\varphi} /\mu )
		\| h_{\mu}(y) - h_{\mu}(w) \|
		\\& \le
 \left(\alpha_{1}^{h} + (M_{h})^2 \alpha_{1}^{\varphi}  + \left(\alpha_2^{h} +  (M_{h})^2 \alpha_2^{\varphi}\right)/\mu \right)
		   \| y - w \|,
	\end{align*}
	where the second inequality follows from the Lipschitz continuity of $\varphi_{\mu}$ and $h_{\mu}$ (see Proposition~\ref{prop-eps-subdiff}),
	the third inequality holds because of the Lipschitz continuity of $\nabla \varphi_\mu$ and $\nabla h_{\mu}$,
	and the last inequality holds thanks to the Lipschitz continuity of $h_{\mu}$.
	
	Next, using Definition~\ref{def-sm}(ii), we have, for every $\mu>0$ and $y\in \mathbb{Y}$,
	\begin{align*}
		\varphi_{\mu}(h_\mu(y)) & \le [h_\mu(y)]_{+} + \alpha_{3, 2}^{\varphi} \mu
		\le [h(y) + \alpha_{3, 2}^{h} \mu]_{+} + \alpha_{3, 2}^{\varphi} \mu
		\\ & \le  [h(y)]_{+} + ([\alpha_{3, 2}^{h}]_{+} + \alpha_{3, 2}^{\varphi}) \mu
	\end{align*}
	and
	\begin{align*}
		\varphi_{\mu}(h_\mu(y)) & \ge [h_\mu(y)]_{+} - \alpha_{3, 1}^{\varphi} \mu
		\ge [h(y) - \alpha_{3, 1}^{h} \mu]_{+} - \alpha_{3, 1}^{\varphi} \mu
		\\ & \ge  [h(y)]_{+} - ([\alpha_{3, 1}^{h}]_{+} + \alpha_{3, 1}^{\varphi}) \mu.
	\end{align*}
	
	Finally, for every $y\in \mathbb{Y}$ and $\mu_0 \ge \mu_1 >0$, using the convexity of $\varphi_{\mu_1}$, we have
	\begin{align*}
		& \varphi_{\mu_1}(h_{\mu_1}(y)) \le \varphi_{\mu_1} (h_{\mu_0}(y)) + \varphi_{\mu_1}^{\prime} ( h_{\mu_1}(y) ) ( h_{\mu_1}(y) - h_{\mu_0}(y) )
		\\ & \le \varphi_{\mu_1} (h_{\mu_0}(y)) + \alpha_{4}^{h} (\mu_{0}-\mu_{1})
		\le \varphi_{\mu_0} (h_{\mu_0}(y)) +  (\alpha_{4}^{h}  + \alpha_4^{\varphi}) (\mu_0 - \mu_1),
	\end{align*}
	where the second inequality holds because of \eqref{eq-gmu-upper-Lip-cont} and \eqref{eq-eps-subd-ReLU},
    and the last inequality holds thanks to \eqref{eq-varphi-upper-Lip-cont}.
\end{proof}

Before ending this section, we present a lemma concerning properties of the composition of $H_\mu$ with $c$, where $\{H_\mu\}_{\mu > 0}$ is the SA of $h^+$ in Lemma~\ref{lm-Hmu}.
\begin{lemma}\label{lm-g-mu}
	Consider \eqref{opt-dc-hc}, and suppose
	that Assumption~\ref{ass-gen} holds.
	Let $\{H_{\mu}\}_{\mu>0}$ be defined as in \eqref{eq-H-mu-def}, and let $\alpha_1, \alpha_2, \alpha_{3, 1}, \alpha_{3,2}$, $\alpha_3$ and $\alpha_4$ be given in \eqref{eq-def-alpha} and \eqref{eq-def-alpha4}.
	Define
	\begin{equation}\label{eq-g-mu-def}
		g_{\mu} \coloneqq  H_{\mu} \circ c \quad \forall \mu>0.
	\end{equation}
	Then the following statements hold.
	\begin{enumerate}[{\rm (i)}]
		\item We have
		\begin{align}
			& g^{+}(x) - \alpha_{3, 1} \mu \le g_{\mu}(x)\le g^{+}(x) + \alpha_{3,2} \mu\quad \forall\, x\in \mathbb{X},\,
			\mu>0, \label{eq-gmu-appr}\\
			& g_{\mu_1}(x) \le g_{\mu_{0}}(x) + \alpha_{4} (\mu_{0} - \mu_1)\quad \forall\,
			x\in \mathbb{X},\, \mu_{0} \ge \mu_1>0. \label{eq-gmu-Lips}
		\end{align}
		\item For every $\mu>0$,
		\begin{equation*}
			g_{\mu}(x) \ge g_{\mu}(z) + \langle \nabla g_{\mu}(z), x - z\rangle - \frac{L_{c}M_{h}}{2} \|x - z\|^2
			\quad \forall\, x,z\in \mathcal{C}.
		\end{equation*}
		\item For every $\mu>0$, $\nabla g_{\mu}$ is $L_{\mu}$-Lipschitz continuous on $\mathcal{C}$, where
		\begin{equation}\label{eq-L-mu}
			L_{\mu} \coloneqq M_{h}L_{c} + \alpha_{1} ({M}_{c})^2 + \alpha_2 ({M}_{c})^2/\mu.
		\end{equation}
         \item If $c$ is $(-\mathrm{hzn}\, h)$-convex,
        then $g$, $g^{+}$ and $g_{\mu}$ are convex for any $\mu>0$.
	\end{enumerate}
\end{lemma}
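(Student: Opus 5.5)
For (i), I will compose the bounds of Lemma~\ref{lm-Hmu} with $c$. Since $\{H_\mu\}$ is an SA of $h^+$ with parameters $(\alpha_1,\alpha_2,\alpha_{3,1},\alpha_{3,2})$, Definition~\ref{def-sm}(ii) at $y=c(x)$ gives $h^+(c(x))-\alpha_{3,1}\mu\le H_\mu(c(x))\le h^+(c(x))+\alpha_{3,2}\mu$. Because $g^+(x)=h^+(c(x))$, this is \eqref{eq-gmu-appr}. Likewise, the monotonicity-type inequality in Lemma~\ref{lm-Hmu}, evaluated at $y=c(x)$, is exactly \eqref{eq-gmu-Lips}.

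For (ii) and (iii), I will use the chain rule $\nabla g_\mu(x)=Dc(x)^*\nabla H_\mu(c(x))$.

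For (ii), fix $x,z\in\mathcal C$. Convexity of $H_\mu$ gives
\[
g_\mu(x)\ge g_\mu(z)+\langle\nabla H_\mu(c(z)),c(x)-c(z)\rangle .
\]
Write $c(x)-c(z)=Dc(z)(x-z)+r$. By $L_c$-smoothness of $c$ on the convex set $\mathcal C$ (using the integral form of the mean value theorem along the segment, which stays in $\mathcal C$), we have $\|r\|\le \tfrac{L_c}{2}\|x-z\|^2$. Proposition~\ref{prop-eps-subdiff} shows $\varphi_\mu$ and $h_\mu$ are $1$- and $M_h$-Lipschitz, respectively; the former because $[\cdot]_+$ is $1$-Lipschitz. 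Hence $\|\nabla H_\mu\|\le |\varphi_\mu'|\,\|\nabla h_\mu\|\le M_h$, using $\varphi_\mu'\in[0,1]$ from \eqref{eq-eps-subd-ReLU}. So $\langle \nabla H_\mu(c(z)),r\rangle\ge -\tfrac{L_cM_h}{2}\|x-z\|^2$, which proves (ii).

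For (iii), I will split
\begin{align*}
\|\nabla g_\mu(x)-\nabla g_\mu(z)\| \le{}& \|(Dc(x)-Dc(z))^*\nabla H_\mu(c(x))\| \\
&+\|Dc(z)^*(\nabla H_\mu(c(x))-\nabla H_\mu(c(z)))\| .
\end{align*}
The first term is at most $L_cM_h\|x-z\|$. For the second, $\|Dc(z)\|\le M_c$ on $\mathcal C$, which holds by Lipschitz continuity of $c$ on the convex set $\mathcal C$; strictly this bounds the derivative along feasible directions, and I take it as meaning an operator norm bound. Combined with the Lipschitz constant $\alpha_1+\alpha_2/\mu$ of $\nabla H_\mu$ and $\|c(x)-c(z)\|\le M_c\|x-z\|$, the second term is at most $(\alpha_1+\alpha_2/\mu)M_c^2\|x-z\|$. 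Together these give $L_\mu$.

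For (iv), I expect the main obstacle. The convexity of a convex function composed with a map that is convex with respect to a cone requires the outer function to be monotone with respect to that cone. I read $(-\mathrm{hzn}\,h)$-convexity of $c$ as
\[
c(\lambda x+(1-\lambda)z)-\lambda c(x)-(1-\lambda)c(z)\in -\mathrm{hzn}\,h
\]
for $\lambda\in[0,1]$. This means $\lambda c(x)+(1-\lambda)c(z)=c(\lambda x+(1-\lambda)z)+d$ with $d\in\mathrm{hzn}\,h$. By definition of $\mathrm{hzn}\,h$, $h(y+d)\le h(y)$, so
\[
h(c(\lambda x+(1-\lambda)z))=h\bigl(\lambda c(x)+(1-\lambda)c(z)-d\bigr).
\]
I need $h(y-d)\le h(y)$, and for this I must check the sign convention: the direction that decreases $h$ must be the one whose sign matches the cone in the definition. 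If the paper's convention is $\lambda c(x)+(1-\lambda)c(z)-c(\cdot)\in-\mathrm{hzn}\,h$, then $c(\cdot)=\lambda c(x)+(1-\lambda)c(z)+d$ with $d\in\mathrm{hzn}\,h$. In that case
\[
h(c(\cdot))\le h(\lambda c(x)+(1-\lambda)c(z))\le\lambda g(x)+(1-\lambda)g(z).
\]
So $g$ is convex, and $g^+=\max\{g,0\}$ is convex as well.

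For $g_\mu$ I apply the same argument to $H_\mu=\varphi_\mu\circ h_\mu$. This needs $\mathrm{hzn}\,h\subseteq\mathrm{hzn}\,H_\mu$. Since for a real-valued convex function $\mathrm{hzn}\,\phi=\{d:\phi^\infty(d)\le0\}$, \eqref{eq-hmu-hzn} gives $\mathrm{hzn}\,h_\mu=\mathrm{hzn}\,h$. Moreover $\varphi_\mu$ is nondecreasing because $\varphi_\mu'\ge0$, so any $d$ with $h_\mu(y+d)\le h_\mu(y)$ also satisfies $H_\mu(y+d)\le H_\mu(y)$. Finally, $H_\mu$ is convex, being a nondecreasing convex function composed with a convex function, and the composition argument above then yields convexity of $g_\mu$.
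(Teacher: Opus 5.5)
Your proposal is correct and follows the paper's proof. Item (i) is Lemma~\ref{lm-Hmu} evaluated at $y=c(x)$, and (ii)--(iii) are the standard chain-rule estimates the paper delegates to \cite{25XPS}. Item (iv) uses the same ingredients as the paper, namely $\mathrm{hzn}\,h_\mu=\mathrm{hzn}\,h$ via \eqref{eq-hmu-hzn} and monotonicity of $\varphi_\mu$; you just prove by hand the composite-convexity fact the paper cites from \cite[Theorem~1]{21BTN}. The convention you settle on, $c(\lambda x+(1-\lambda)z)-\lambda c(x)-(1-\lambda)c(z)\in\mathrm{hzn}\,h$, is the standard one: for $h=\max_i y_i$ it says each $c_i$ is convex.

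The caveat you flag in (iii) is real, and the paper's constant $(M_c)^2$ relies on it just as implicitly. The bound $\|Dc(z)\|\le M_c$ on $\mathcal C$ follows from $M_c$-Lipschitz continuity when $\mathrm{int}\,\mathcal C\neq\emptyset$, by continuity of $Dc$. Otherwise directions orthogonal to $\mathrm{aff}\,\mathcal C$ are uncontrolled, and $M_c$ must be read as a bound on $\sup_{z\in\mathcal C}\|Dc(z)\|$.
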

\begin{proof}
	Item (i) follows from Definition~\ref{def-sm}(ii), Lemma~\ref{lm-Hmu}, and the definition of $g_{\mu}$ in \eqref{eq-g-mu-def}.
	
	Clearly, $h^{+}$ is $M_{h}$-Lipschitz continuous, which together with Proposition~\ref{prop-eps-subdiff} and Lemma~\ref{lm-Hmu} implies that $H_{\mu}$ is $M_{h}$-Lipschitz continuous for every $\mu>0$.
	Combining this fact with the $L_c$-smoothness of $c$ on ${\cal C}$ (see Assumption~\ref{ass-gen}(ii)) and proceeding as in the proof of \cite[Proposition~2.6(iii)]{25XPS}, we see that item (ii) holds.
	
	
	Item (iii) can be established in a way similar to \cite[Lemma~3.1]{25XPS}, using the compactness of $\mathcal{C}$ together with Definition~\ref{def-sm}(i) and Proposition~\ref{prop-eps-subdiff}.

    We now prove item (iv). For any $\mu>0$, from \eqref{eq-hmu-hzn}, it follows that $h^{\infty} = h_{\mu}^{\infty}$,
    which implies that $\mathrm{hzn}\, h = \mathrm{hzn}\, h_{\mu}$.
    Using this fact, and invoking \cite[Theorem~1]{21BTN}, we obtain that $h\circ c$ and $h_{\mu}\circ c$ are convex on $\mathbb{X}$ for every $\mu>0$.
    Moreover, for every $\mu>0$, we see from \eqref{eq-eps-subd-ReLU} that $\varphi_{\mu}$ is nondecreasing on $\mathbb{R}$,
    which, together with the convexity of $\varphi_{\mu}$ and $h_{\mu}\circ c$, implies that $g_{\mu} = \varphi_{\mu} \circ h_{\mu}\circ c$ is convex on $\mathbb{X}$. Similarly, $(h\circ c)^+$ is convex because $[\cdot]_+$ is convex and nondecreasing and $h\circ c$ is convex.
\end{proof}

\section{Smoothing ESQM}\label{sec3}
\subsection{Algorithm}
We now present our smoothing extended sequential quadratic method (\emph{s}ESQM)
for solving \eqref{opt-dc-hc},
summarized in Algorithm~\ref{alg-sl-sm-esqm} below.
\begin{algorithm}[h]
	\caption{{\em s}ESQM: a smoothing ESQM for \eqref{opt-dc-hc} under Assumption~\ref{ass-gen}} \label{alg-sl-sm-esqm}
	\begin{algorithmic}[0]
		\State {\bf Require.} Given $x^{0}\! \in\! \mathcal{C}$,
		$\mu_{0}>0$, $\overline{L} \ge \underline{L}>0$,
		${c_1}>0$, $c_2>0$, a decreasing positive
		sequence $\{{\widehat \gamma}_{k}\}$ (i.e., $\widehat \gamma_{k} > \widehat\gamma_{k+1} > 0$ for all $k$), and the SAs $\{h_{\mu}\}_{\mu>0}$ and $\{\varphi_{\mu}\}_{\mu>0}$ in Remark~\ref{ass-sm}.
		Set $t_0 = 0$ and $k = 0$.
		\State {\bf Step 1.}
		Take $\xi^k\in\partial P_2(x^k)$
		and $L_{k,0} \in [\underline{L}, \overline{L}]$.
		Set  $\gamma_{k} = {\widehat \gamma}_{t_{k}}$ and $\widetilde{L} = L_{k,0}$.
		\State {\bf Step 2.}
		Compute
			\begin{align}
				\quad\quad\qquad	\widetilde{x}
				=\underset{x\in \mathbb{X}}{\mathrm{arg\, min}}  \Big\{\psi_{k, \widetilde{L}}(x)
				& \coloneqq \gamma_{k}
				(\langle \nabla f(x^{k}) - \xi^k,\, x\rangle + P_1(x)) \notag
				\\ & \ \ \ \,
				+  \ell_{k}(x)
				+ \tfrac{\widetilde{L}}{2\mu_{k}}\|x-x^{k}\|^2 + \delta_{\mathcal{C}}(x)
				\Big\},\label{opt-sub}
			\end{align}
		\quad \quad \quad \quad where
		\begin{equation}\label{eq-ell-k}
			\ell_{k}(x) \coloneqq g_{\mu_{k}}(x^{k}) +  \langle \nabla g_{\mu_{k}}(x^{k}), x - x^{k} \rangle;
		\end{equation}
		\quad \quad \quad \quad here $\{g_{\mu}\}_{\mu>0}$ is defined as in \eqref{eq-g-mu-def}.
		\State {\bf Step 3.} If
		\begin{equation}\label{eq-ls-desc}
				\qquad \ \ \
				\gamma_{k} \psi(\widetilde{x})  +  g_{\mu_{k}}(\widetilde{x})
				\le \gamma_{k} \psi(x^{k})  +   g_{\mu_{k}}(x^{k})
				-  \tfrac{{c_1}}{2\mu_{k}}\|\widetilde{x} - x^{k}\|^2,
		\end{equation}
		\quad \quad \quad \quad then go to {\bf Step 4}; otherwise, let $\widetilde{L} \leftarrow 2 \widetilde{L}$ and go to {\bf Step 2.}
		
		\State {\bf Step 4.}
		Let $x^{k+1} = \widetilde{x}$ and $L_{k} = \widetilde{L}$.
		If the conditions
		\begin{equation}\label{eq-gamma-cond}
			h_{\mu_{k}}(c(x^{k+1}) )
			 > 2 \alpha_{3}^{\varphi} \mu_k  \quad \text{and}\quad
			\|x^{k+1} - x^{k}\| \le c_2\mu_{k} \gamma_{k}
		\end{equation}
		\quad \quad \quad \quad hold,
		then let $t_{k+1} = t_{k} + 1$; otherwise let $t_{k+1} = t_{k}$.
		
		\State {\bf Step 5.} Choose $\mu_{k+1}\in (0, \mu_{k}]$. Update $k\leftarrow k+1$ and go to {\bf Step 1.}
	\end{algorithmic}
\end{algorithm}

	In essence, our method is based on the following auxiliary function:
	$$
	f(x) + P_1(x) - P_2 (x) +\delta_{\mathcal{C}}(x) + \gamma^{-1} H_{\mu}(c(x)),
	$$
	where $\gamma>0$ and $\mu>0$,
	and the SA $\{H_{\mu}\}_{\mu>0}$ is given in \eqref{eq-H-mu-def}.
	
	In each iteration, the smooth components
	$f$ and $g_{\mu_{k}}=H_{\mu_k}\circ c$ are linearized
	at the current iterate $x^{k}$.
	In addition, following the standard DC algorithm,
	the $P_2$ is linearized using its subgradient at $x^{k}$,
	while the convex term $P_1+\delta_{\mathcal{C}}$ is kept.
	A quadratic proximal regularization term is then added to ensure strong convexity of the subproblem objective. Consequently, the subproblem \eqref{opt-sub} corresponds to a computation of the proximal mapping of $\frac{\gamma_k\mu_k}{\widetilde L}P_1 + \delta_{\cal C}$.
	
	A linesearch is performed in {\bf Step~3} to enforce sufficient descent of a certain merit function.
	Moreover, we decrease the parameters $\{\gamma_k\}$ when both conditions in \eqref{eq-gamma-cond} hold.
	The first condition explicitly involves $\alpha_3^{\varphi}$, a parameter of the SA $\{\varphi_{\mu}\}_{\mu>0}$ in Remark~\ref{ass-sm}.
	The second condition plays a key role in guaranteeing a positive lower bound of $\{\gamma_k\}$ (see Theorem~\ref{th-gamma-bd} below) and is essential for the convergence analysis. We next argue the well-definedness of the algorithm; more precisely, we need to show that at each iteration, {\bf Step 3} is only invoked finitely many times.

\begin{lemma}[Well-definedness of Algorithm~\ref{alg-sl-sm-esqm}]\label{lm-well-def}
	Consider \eqref{opt-dc-hc}, and suppose that Assumption~\ref{ass-gen} holds.
	Let $x^{k}$ be generated at the beginning of the $k$th iteration of Algorithm~\ref{alg-sl-sm-esqm} for some $k\ge 0$.
	Then the following statements hold.
	Particularly, item {\rm (ii)} implies that Algorithm~\ref{alg-sl-sm-esqm} is well defined.
	\begin{enumerate}[\rm (i)]
		\item It holds that $x^{k} \in \mathcal{C}$.
		\item For every $\widetilde{L}>0$, problem \eqref{opt-sub} has a unique solution $\widetilde{x} \in \mathcal{C}$.
		Moreover,  the inner loop terminates finitely with some $L_k$ satisfying
		\begin{equation}\label{eq-ML}
			\underline{L} \le L_{k} \le M_{L} \coloneqq \max\{\overline{L}, {c_1} + \gamma_{0} \mu_{0} L_{f} + \mu_{0} L_{\mu_{0}}\},
		\end{equation}
		where $L_{\mu}$ is given in \eqref{eq-L-mu}. In particular, an $x^{k+1}$ can be generated at the end of the $k$th iteration.
		
		\item For every  $x \in \mathcal{C}$,
		\begin{align}
			\gamma_{k} \psi(x^{k+1})
			+  g_{\mu_k}(x^{k+1})
			& \le  \gamma_{k} (f(x^{k}) + \langle \nabla f(x^{k}) - \xi^k, x - x^{k} \rangle + P_1(x) - P_2(x^k) ) \notag
			\\ & \ \ + \ell_{k}(x)
			+ \frac{L_{k}}{2\mu_{k}} (\|x^{k} - x\|^2 - \|x^{k+1} - x\|^2) \notag
			\\ & \ \ + \frac{\mu_k \gamma_k L_f + \mu_k L_{\mu_{k}} - L_{k}}{2\mu_{k}} \| x^{k+1} - x^{k}\|^2, \label{eq-des-xk1}
		\end{align}
        where $L_{\mu}$ is given in \eqref{eq-L-mu}.
		Moreover, it holds that
		\begin{align}
			0 &\in \gamma_{k} (\nabla f(x^{k}) - \xi^k + \partial P_1(x^{k+1})) 	
			+  \nabla g_{\mu_{k}}(x^{k})
			 + \tfrac{L_{k}}{\mu_{k}}(x^{k+1} - x^{k})  	
			+ \mathcal{N}_{\mathcal{C}}(x^{k+1}). \label{eq-sub-kkt-k1}
		\end{align}

		\item It holds that
		\begin{equation}\label{eq-des-Q}
			Q_{k+1}
			\le Q_{k}
			-  \frac{{c_1}}{2\mu_{k}}\|x^{k+1} - x^{k}\|^2,
		\end{equation}
		where
		\begin{equation}\label{eq-Q-def}
			Q_{k} \coloneqq \gamma_{k}(\psi(x^{k}) - \bar{m}) +  g_{\mu_{k}}(x^{k}) + \alpha_{4}  \mu_{k}
		\end{equation}
		with $\bar{m}\coloneqq \inf\{\psi(x) : x \in \mathbb{X}\}>-\infty$ and $\alpha_4$ given in \eqref{eq-def-alpha4}.
	\end{enumerate}
\end{lemma}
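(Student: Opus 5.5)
The argument is an induction on $k$, establishing (i)--(iv) together. For (i), $x^0\in\mathcal{C}$ by assumption. Once (ii) is known at iteration $k$, we get $x^{k+1}=\widetilde x\in\mathcal{C}$, because $\delta_{\mathcal C}$ appears in \eqref{opt-sub}.

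\textbf{Item (ii).} For fixed $\widetilde L>0$, the objective $\psi_{k,\widetilde L}$ is the sum of three pieces:
\begin{itemize}
\item the proper closed convex function $\gamma_kP_1+\delta_{\mathcal C}$, with $\mathcal C$ compact and $P_1$ real-valued;
\item an affine function;
\item the $\widetilde L/\mu_k$-strongly convex quadratic.
\end{itemize}
Hence $\psi_{k,\widetilde L}$ is strongly convex, has a unique minimizer, and that minimizer lies in $\mathcal{C}$. The optimality condition is Fermat's rule combined with the sum rule, where $P_1$ is real-valued and $\nabla \ell_k=\nabla g_{\mu_k}(x^k)$. This gives \eqref{eq-sub-kkt-k1} with $\widetilde L$ in place of $L_k$. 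Strong convexity gives, for every $x\in\mathcal C$,
\[
\psi_{k,\widetilde L}(\widetilde x)+\tfrac{\widetilde L}{2\mu_k}\|x-\widetilde x\|^2\le\psi_{k,\widetilde L}(x).
\]

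\textbf{Descent inequality.} I will add three ingredients to the display above:
\begin{itemize}
\item the descent lemma for $f$ on $\mathcal C$, namely $f(\widetilde x)\le f(x^k)+\langle\nabla f(x^k),\widetilde x-x^k\rangle+\frac{L_f}{2}\|\widetilde x-x^k\|^2$;
\item the descent lemma for $g_{\mu_k}$ from Lemma~\ref{lm-g-mu}(iii), namely $g_{\mu_k}(\widetilde x)\le \ell_k(\widetilde x)+\frac{L_{\mu_k}}{2}\|\widetilde x-x^k\|^2$;
\item the subgradient inequality $-P_2(\widetilde x)\le -P_2(x^k)-\langle\xi^k,\widetilde x-x^k\rangle$.
\end{itemize}
Multiplying the $f$ and $P_2$ parts by $\gamma_k$ and summing yields \eqref{eq-des-xk1} with $\widetilde L$ in place of $L_k$, for every $x\in\mathcal C$. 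In this sum the linear terms in $\widetilde x$ are absorbed into $\psi_{k,\widetilde L}(\widetilde x)$, and the constant $\gamma_k\langle \nabla f(x^k)-\xi^k,x^k\rangle$ is handled by rewriting inner products relative to $x^k$.

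\textbf{Finite termination.} Take $x=x^k$ in this inequality. Since $\ell_k(x^k)=g_{\mu_k}(x^k)$, we get
\[
\gamma_k\psi(\widetilde x)+g_{\mu_k}(\widetilde x)\le \gamma_k\psi(x^k)+g_{\mu_k}(x^k)+\frac{\mu_k\gamma_kL_f+\mu_kL_{\mu_k}-2\widetilde L}{2\mu_k}\|\widetilde x-x^k\|^2 .
\]
Thus \eqref{eq-ls-desc} holds as soon as $2\widetilde L\ge c_1+\mu_k\gamma_kL_f+\mu_kL_{\mu_k}$. Now $\mu L_\mu=\mu(M_hL_c+\alpha_1M_c^2)+\alpha_2M_c^2$ is nondecreasing in $\mu$, $\mu_k\le\mu_0$, and $\gamma_k\le\gamma_0$ because $\{\widehat\gamma_t\}$ is decreasing. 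So it suffices that $\widetilde L\ge c_1+\gamma_0\mu_0L_f+\mu_0L_{\mu_0}$. The doubling rule starts from $L_{k,0}\in[\underline L,\overline L]$ and only doubles while the test fails, so it stops at some $L_k\le\max\{\overline L,\,c_1+\gamma_0\mu_0L_f+\mu_0L_{\mu_0}\}$. This is \eqref{eq-ML}, up to tracking the factor of $2$ carefully; the doubling overshoot is compensated by the factor $2$ in $2\widetilde L$. The lower bound $L_k\ge\underline L$ is trivial. With $\widetilde L=L_k$, the inequalities above then give (iii) directly.

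\textbf{Item (iv).} From \eqref{eq-ls-desc} with $x^{k+1}=\widetilde x$, subtract $\gamma_k\bar m$ from both sides. Three further facts are needed:
\begin{itemize}
\item $\gamma_{k+1}\le\gamma_k$ and $\psi(x^{k+1})-\bar m\ge0$, so $\gamma_{k+1}(\psi(x^{k+1})-\bar m)\le\gamma_k(\psi(x^{k+1})-\bar m)$;
\item by \eqref{eq-gmu-Lips}, $g_{\mu_{k+1}}(x^{k+1})\le g_{\mu_k}(x^{k+1})+\alpha_4(\mu_k-\mu_{k+1})$;
\item adding $\alpha_4\mu_{k+1}$ to both sides turns the right-hand side into $Q_k-\frac{c_1}{2\mu_k}\|x^{k+1}-x^k\|^2$.
\end{itemize}
The finiteness $\bar m>-\infty$ holds because $\psi$ is continuous on the compact set $\mathcal C$ and $+\infty$ off it.

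The main obstacle is bookkeeping rather than a deep step. The two points needing care are the monotonicity of $\mu L_\mu$ in $\mu$ behind the uniform bound $M_L$, and the exact constant accounting so that the doubling rule respects the stated $M_L$.
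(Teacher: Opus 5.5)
Your proposal is correct and follows essentially the same route as the paper. You use strong convexity of $\psi_{k,\widetilde L}$, the descent lemmas for $f$ and $g_{\mu_k}$ on $\mathcal{C}$, and the subgradient inequality for $P_2$, then specialize at $x=x^k$, and finally use \eqref{eq-gmu-Lips} with the monotonicity of $\{\gamma_k\}$ and $\{\mu_k\}$ for (iv). The factor of $2$ you flag is handled exactly as in the paper: the test \eqref{eq-ls-desc} already holds for every $\widetilde L\ge M_L/2$, so the accepted value is either $L_{k,0}\le\overline L$ or twice a rejected value below $M_L/2$, and hence $L_k\le M_L$.
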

\begin{proof}
	Clearly, $x^{k} \in \mathcal{C}$. Moreover, notice that for every $\widetilde{L}>0$, the objective of \eqref{opt-sub} is a proper closed strongly convex function. Consequently, this problem has a unique solution, and hence the $\widetilde{x}\in \mathcal{C}$ in \eqref{opt-sub} is well-defined.
	
	We now prove that the inner loop terminates finitely and \eqref{eq-ML} holds.
	Since $\psi_{k, \widetilde{L}}$ in \eqref{opt-sub} is ${\widetilde{L}}/({2\mu_{k}})$-strongly convex, we have that
	$$
	\psi_{k, \widetilde{L}}(\widetilde{x})
	\le \psi_{k, \widetilde{L}}(x) -\tfrac{\widetilde{L}}{2\mu_{k}} \|x - \widetilde{x}\|^2
	\quad \forall x\in \mathbb{X},
	$$
	which implies that, for every $x\in \mathcal{C}$,
		\begin{align}
			& \gamma_{k} (\langle \nabla f(x^{k}) - \xi^k, \widetilde{x} - x \rangle + P_1(\widetilde{x}))
			+  \ell_{k}(\widetilde{x}) \notag
			\\ & \le \gamma_k P_1(x) + \ell_{k}(x)
			+ \tfrac{\widetilde{L}}{2\mu_{k}}( \|x - x^{k}\|^2
			-  \|x - \widetilde{x}\|^2
			-  \| \widetilde{x} - x^{k}\|^2).
            \label{eq-str-conv}
		\end{align}
	Next, since $g_{\mu_{k}}$ is $L_{\mu_{k}}$-smooth on $\mathcal{C}$ (see Lemma~\ref{lm-g-mu}(iii)), and noting that both $\widetilde{x}\in \mathcal{C}$ and $ x^{k}\in \mathcal{C}$, we obtain
	\begin{equation}\label{eq-g-desc-lm}
		g_{\mu_{k}}(\widetilde{x}) \le \ell_{k}(\widetilde{x}) + \tfrac{L_{\mu_{k}}}{2} \|\widetilde{x} - x^{k}\|^2.
	\end{equation}
    Now, using the $L_f$-smoothness of $f$ on ${\cal C}$ and the convexity of $P_2$, we have
		\begin{align*}
			 \gamma_{k} \left(f(\widetilde{x}) + P_1(\widetilde{x}) - P_2(\widetilde{x})\right) +  g_{\mu_k}(\widetilde{x})
			&  \le
			\gamma_{k} (f(x^{k}) +  \langle \nabla f(x^{k}), \widetilde{x} - x^{k} \rangle + \tfrac{L_f}{2} \|\widetilde{x} - x^{k}\|^2)
			\\ & ~~~~ +  \gamma_{k} (P_1(\widetilde{x}) - P_2(x^{k}) - \langle\xi^k, \widetilde{x} - x^k\rangle) +  g_{\mu_k}(\widetilde{x})
			\\ & \le  \gamma_{k} (f(x^{k}) +  \langle \nabla f(x^{k}) - \xi^k, \widetilde{x} - x^{k} \rangle + P_1(\widetilde{x}) - P_2(x^k))
			\\ & ~~~~ + \ell_{k}(\widetilde{x}) + \frac{\gamma_k L_f + L_{\mu_k}}{2} \|\widetilde{x} - x^{k}\|^2,
		\end{align*}
	where the last inequality holds because of \eqref{eq-g-desc-lm}.
	
	From the last display, \eqref{eq-str-conv} and the fact that $\widetilde{x} \in \mathcal{C}$,
	we see that for every $x\in \mathcal{C}$,
		\begin{align}
			&\gamma_{k} \psi(\widetilde{x})
			+  g_{\mu_k}(\widetilde{x}) \notag
			\\& \le  \gamma_{k} (f(x^{k}) + \langle \nabla f(x^{k}) - \xi^k, x - x^{k} \rangle + P_1(x) - P_2(x^k) ) + \ell_{k}(x) \notag
			\\ & \ \ + \frac{\widetilde{L}}{2\mu_{k}} (\|x - x^{k}\|^2 - \|x - \widetilde{x}\|^2) + \frac{\mu_k \gamma_k L_f + \mu_k L_{\mu_{k}} - \widetilde{L}}{2\mu_{k}} \| \widetilde{x} - x^{k}\|^2. \label{eq-des-xk-tilde}
		\end{align}
	Upon setting $x = x^{k}$ in \eqref{eq-des-xk-tilde}, and using \eqref{eq-ell-k} and the fact that $x^{k}\in \mathcal{C}$,
	we have
	\begin{align}\label{eq-des}
		\gamma_{k} \psi(\widetilde{x}) +  g_{\mu_{k}}(\widetilde{x})
		\le \gamma_{k} \psi(x^{k}) +  g_{\mu_{k}}(x^{k}) - \frac{2\widetilde{L} - \gamma_{k} \mu_{k} L_f - \mu_{k} L_{\mu_{k}} }{2\mu_{k}} \|\widetilde{x}
		- x^{k}\|^2.
	\end{align}
	
	Now, using $\mu_{k} \le \mu_{0}$, $\mu_{k} L_{\mu_{k}} \le \mu_{0} L_{\mu_{0}}$ (see \eqref{eq-L-mu}) and $\gamma_{k}\le \gamma_{0}$, we have that for every $\widetilde{L} \ge M_{L}/2$,
	\begin{equation*}
		2\widetilde{L} - \gamma_{k} \mu_{k} L_f - \mu_{k} L_{\mu_{k}} \ge M_{L} - ( \gamma_{0} \mu_{0} L_{f} + \mu_{0} L_{\mu_{0}})
		\ge c_1,
	\end{equation*}
	where the last inequality follows from
	the definition of $M_{L}$ in \eqref{eq-ML}.
	Combining this with \eqref{eq-des},
	we obtain that
	condition \eqref{eq-ls-desc} holds for all $\widetilde{L}\ge M_{L}/2$.
	Consequently, at the $k$th iteration, the inner loop (i.e., {\bf Step 2} and {\bf Step 3}) of Algorithm~\ref{alg-sl-sm-esqm} is finitely terminated,
	i.e., there exists an $l_{k}\in \mathbb{N}$
	such that $L_{k} = 2^{l_{k}}L_{k,0}$. The bound \eqref{eq-ML} now follows immediately from the fact that \eqref{eq-ls-desc} holds for all $\widetilde{L}\ge M_{L}/2$ and the rule for updating $\widetilde L$ in {\bf Step 3} of Algorithm~\ref{alg-sl-sm-esqm}.
	
	Next, we consider item (iii).
	Replacing $(\widetilde{x}, \widetilde{L})$
	by $(x^{k+1}, L_k)$ in \eqref{eq-des-xk-tilde}, we obtain \eqref{eq-des-xk1}.
	In addition, \eqref{eq-sub-kkt-k1} follows immediately upon noting that $x^{k+1}$ is a solution of \eqref{opt-sub}
	with $\widetilde{L} = L_{k}$ and invoking the first-order optimality condition.
	
	Finally, we turn to item (iv).
	Using \eqref{eq-gmu-Lips} and $\mu_{k+1}\le \mu_k$, together with $0<\gamma_{k+1}\le \gamma_{k}$ and $\psi(x^{k+1}) - \bar{m} \ge 0$, we have
	\begin{align*}
		& \gamma_{k+1}(\psi(x^{k+1}) - \bar{m}) +
		g_{\mu_{k+1}}(x^{k+1}) +  \alpha_{4}\mu_{k+1} \notag
		\\ & \le \gamma_{k} (\psi(x^{k+1}) - \bar{m}) +  g_{\mu_{k}}(x^{k+1}) +  \alpha_{4}\mu_{k} \notag
		\\ & \le \gamma_{k}(\psi(x^{k}) - \bar{m}) +
		g_{\mu_{k}}(x^{k}) + \alpha_{4}  \mu_{k} - \tfrac{{c_1}}{2\mu_{k}}\|x^{k+1} - x^{k}\|^2,
	\end{align*}
	where the last inequality holds because of \eqref{eq-ls-desc}
	with $\widetilde{x} = x^{k+1}$.
	This completes the proof.
\end{proof}

\subsection{Complexity analysis}

In this subsection, we analyze the complexity of Algorithm~\ref{alg-sl-sm-esqm} for obtaining an $(\epsilon_1,\epsilon_2)$-KKT point of \eqref{opt-dc-hc} under the following assumption on the sequences $\{\mu_k\}$ and $\{\widehat\gamma_k\}$ in Algorithm~\ref{alg-sl-sm-esqm}. Intuitively, we require these sequences to vanish, but do not allow them to go to zero too fast. An explicit choice of $\{\mu_k\}$ can be found in \cite[Proposition~3.5]{25XPS}.
\begin{assumption}\label{ass-mu}
	\begin{enumerate}[{\rm (i)}]
		\item The sequence $\{\mu_{k}\}$ is positive and nonincreasing, and satisfies
		\begin{equation*}
			\textstyle \lim_{k\to \infty} \mu_{k} = 0
			\quad \mbox{and} \quad
			S_{K} \coloneqq \sum_{k = \lceil K/2 \rceil}^{K} \mu_{k}
			\rightarrow \infty \quad \mbox{as}\ K \to \infty.
		\end{equation*}
		\item $\lim_{k\to\infty}\widehat{\gamma}_{k} = 0$, and there exists $M_{\widehat\gamma} > 1$ such that
		\begin{equation}
			\label{eq-gamma-hat}
			0< \widehat{\gamma}_{k+1} < \widehat{\gamma}_{k} \le M_{\widehat\gamma} \widehat{\gamma}_{k+1}
			\quad \forall k\in \mathbb{N}_{0}.
		\end{equation}
	\end{enumerate}
\end{assumption}

We now present our complexity analysis for Algorithm~\ref{alg-sl-sm-esqm}.
We start with an auxiliary lemma and will need to make use of the following quantities, which are all finite under Assumption~\ref{ass-gen}:
\begin{equation}\label{eq-def-M-fPi}
	M_f \coloneqq \sup_{x\in \mathcal{C}} \|\nabla f(x)\|
	\quad \mbox{and} \quad
	M_{P_i} \coloneqq \sup\{\|\xi\| : \xi \in \partial P_i(x), x\in \mathcal{C}\} \quad \forall i=1,2.
\end{equation}
\begin{lemma}\label{lm-properties}
	Consider \eqref{opt-dc-hc}, and suppose that Assumption~\ref{ass-gen} holds.
	Let $\{x^{k}\}$ and $\{\gamma_{k}\}$ be generated by Algorithm~\ref{alg-sl-sm-esqm}.
    Let $\alpha_3$, $L_{\mu}$ and $M_{L}$ be given in \eqref{eq-def-alpha4}, \eqref{eq-L-mu} and \eqref{eq-ML}, respectively.
	Let $M_{f}$, $M_{P_1}$, and $M_{P_2}$ be defined in \eqref{eq-def-M-fPi}.
    Then the following statements hold for every $k\in \mathbb{N}_{0}$.
	\begin{enumerate}[{\rm (i)}]
        \item It holds that
        \begin{align}
        & \lambda_{k+1} \coloneqq  \gamma_{k}^{-1} \varphi_{\mu_k}^{\prime} \left( h_{\mu_k}(c(x^{k+1})) \right) \in [0,  \gamma_{k}^{-1}],	
        \label{eq-lambda-def}
        \\ & u^{k+1} \coloneqq \nabla h_{\mu_k}(c(x^{k+1})) \in \partial_{\alpha_{3}^{h}\mu_k} h (c(x^{k+1})), \label{eq-nabla-h-mu-part}
        \\ & \label{eq-patial-barB-k1}
		v^{k+1} \coloneqq \nabla H_{\mu_k}(c(x^{k+1})) \in
		\partial_{\alpha_{3}\mu_k} h^{+}(c(x^{k+1})),
		\\ & \delta_{k+1} \coloneqq
		\mathrm{dist}(0,
		\nabla f(x^{k+1})  - \partial P_{2}(x^{k})
		+ \partial P_1(x^{k+1}) \notag
		\\ & \qquad\qquad\qquad \ \
		+ \lambda_{k+1} Dc(x^{k+1})^* u^{k+1} + \mathcal{N}_{\mathcal{C}}(x^{k+1}) )
		\le M_1 \gamma_{k}^{-1}\omega_{k+1},
		\label{eq-delta-upbd-omega}
	\end{align}
    where
        \begin{align}
        M_1 \coloneqq M_L + \mu_{0} {\widehat{\gamma}_{0}} L_{f} + \mu_{0} L_{\mu_{0}}
		\quad \mbox{and} \quad
        \omega_{k+1} \coloneqq \mu_k^{-1} \|x^{k+1} - x^{k}\|
		\quad \forall k\in \mathbb{N}_0. \label{eq-omega}
        \end{align}

		\item If $h_{\mu_k}(c(x^{k+1})) > 2 \alpha_{3}^{\varphi} \mu_k$, then
		\begin{align}
			\label{eq-gB-res}
			& \mathrm{dist}(0,  Dc(x^{k+1})^* u^{k+1} + \mathcal{N}_{\mathcal{C}}(x^{k+1})) \notag
			\\ & \qquad \le
			2(M_{f} + M_{P_1} + M_{P_2}) \gamma_{k}
			+ 2(M_{L}
			+\mu_{0}L_{\mu_{0}}) \omega_{k+1}.
		\end{align}

		\item If $h_{\mu_k}(c(x^{k+1})) \le 2 \alpha_{3}^{\varphi} \mu_k$, then
		\begin{align}
			& g_{\mu_{k}}(x^{k+1}) \le (2 \alpha_{3}^{\varphi} + \alpha_{3,2}^{\varphi})\mu_k,
			\quad g^{+}(x^{k+1})
			 \le (3\alpha_{3}^{\varphi} + [\alpha_{3,1}^h]_{+}) \mu_k, \label{eq-feas-upbd}
			\\&   |\lambda_{k+1} g(x^{k+1})|
			\le \gamma_{k}^{-1}(3 \alpha_{3}^{\varphi} + [\alpha_{3,1}^{h}]_+ + [\alpha_{3,2}^{h}]_+) \mu_k. \label{eq-slack-upb}
		\end{align}
	\end{enumerate}
\end{lemma}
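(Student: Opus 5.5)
The plan is to derive everything from the chain rule $\nabla g_{\mu}(x)=Dc(x)^*\varphi_\mu'(h_\mu(c(x)))\nabla h_\mu(c(x))=Dc(x)^*\nabla H_\mu(c(x))$, the subproblem optimality condition \eqref{eq-sub-kkt-k1}, and the $\epsilon$-subdifferential facts in Proposition~\ref{prop-eps-subdiff} and \eqref{eq-eps-subd-ReLU}.

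\textbf{Item (i).} By \eqref{eq-eps-subd-ReLU}, $\varphi_{\mu}'(s)\in[0,1]$ for all $s$, so $\lambda_{k+1}\in[0,\gamma_k^{-1}]$. The inclusions \eqref{eq-nabla-h-mu-part} and \eqref{eq-patial-barB-k1} follow from \eqref{eq-nabla-h-mu-incl}, applied to $h_\mu$ and to the SA $H_\mu$ of $h^+$ from Lemma~\ref{lm-Hmu}. For \eqref{eq-delta-upbd-omega}:
\begin{enumerate}[{\rm (a)}]
\item Divide \eqref{eq-sub-kkt-k1} by $\gamma_k$; the normal cone is invariant under positive scaling. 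This produces $\zeta\in\partial P_1(x^{k+1})$ and $\nu\in\mathcal N_{\mathcal C}(x^{k+1})$ with $\nabla f(x^k)-\xi^k+\zeta+\gamma_k^{-1}\nabla g_{\mu_k}(x^k)+\tfrac{L_k}{\gamma_k\mu_k}(x^{k+1}-x^k)+\nu=0$.
\item Use the identity $\gamma_k^{-1}\nabla g_{\mu_k}(x^{k+1})=\lambda_{k+1}Dc(x^{k+1})^*u^{k+1}$, and note $\xi^k\in\partial P_2(x^k)$.
\item Bound the distance by $\|\nabla f(x^{k+1})-\nabla f(x^k)\|+\gamma_k^{-1}\|\nabla g_{\mu_k}(x^{k+1})-\nabla g_{\mu_k}(x^k)\|+\tfrac{L_k}{\gamma_k\mu_k}\|x^{k+1}-x^k\|$.
\item Apply $L_f$-smoothness and Lemma~\ref{lm-g-mu}(iii). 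This gives $\gamma_k^{-1}(L_k+\gamma_k\mu_kL_f+\mu_kL_{\mu_k})\omega_{k+1}$.
\item Conclude with $L_k\le M_L$ from \eqref{eq-ML}, $\gamma_k\le\widehat\gamma_0$, $\mu_k\le\mu_0$, and the fact that $\mu L_\mu$ is nondecreasing in $\mu$ by \eqref{eq-L-mu}.
\end{enumerate}

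\textbf{Item (ii).} Set $s=h_{\mu_k}(c(x^{k+1}))>2\alpha_3^\varphi\mu_k$. Then \eqref{eq-eps-subd-ReLU} gives $\varphi_{\mu_k}'(s)\ge 1-\alpha_3^\varphi\mu_k/s\ge\tfrac12$. Instead of dividing \eqref{eq-sub-kkt-k1} by $\gamma_k$, keep it as is and replace $\nabla g_{\mu_k}(x^k)$ by $\nabla g_{\mu_k}(x^{k+1})=\varphi'_{\mu_k}(s)Dc(x^{k+1})^*u^{k+1}$, at a cost of $L_{\mu_k}\|x^{k+1}-x^k\|$. The remaining terms $\gamma_k(\nabla f(x^k)-\xi^k+\zeta)$ are bounded by $\gamma_k(M_f+M_{P_1}+M_{P_2})$ via \eqref{eq-def-M-fPi}. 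This yields
\[
\mathrm{dist}\big(0,\varphi'_{\mu_k}(s)Dc(x^{k+1})^*u^{k+1}+\mathcal N_{\mathcal C}(x^{k+1})\big)\le \gamma_k(M_f+M_{P_1}+M_{P_2})+(M_L+\mu_0L_{\mu_0})\omega_{k+1}.
\]
Dividing by $\varphi'_{\mu_k}(s)\ge\tfrac12$, again using cone invariance, gives \eqref{eq-gB-res}.

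\textbf{Item (iii).} Let $s\le 2\alpha_3^\varphi\mu_k$.
\begin{enumerate}[{\rm (a)}]
\item By monotonicity of $[\cdot]_+$ and Definition~\ref{def-sm}(ii), $g_{\mu_k}(x^{k+1})=\varphi_{\mu_k}(s)\le[s]_++\alpha_{3,2}^\varphi\mu_k\le(2\alpha_3^\varphi+\alpha_{3,2}^\varphi)\mu_k$.
\item Then $g^+\le g_{\mu_k}+\alpha_{3,1}\mu_k$ by \eqref{eq-gmu-appr}, and $\alpha_{3,1}^\varphi+\alpha_{3,2}^\varphi=\alpha_3^\varphi$, giving the second bound in \eqref{eq-feas-upbd}.
\end{enumerate}
For the complementarity bound \eqref{eq-slack-upb}, I split into cases.
\begin{enumerate}[{\rm (a)}]
\item If $g(x^{k+1})\ge0$, use $\lambda_{k+1}\le\gamma_k^{-1}$ together with the bound on $g^+$.
\item If $g<0$ and $s\ge0$, then $g\ge s-\alpha_{3,2}^h\mu_k$, so $|g|\le[\alpha_{3,2}^h]_+\mu_k$, and $\lambda_{k+1}\le\gamma_k^{-1}$ finishes.
\item If $g<0$ and $s<0$, this is the delicate case. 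Here $|g|\le|s|+[\alpha_{3,2}^h]_+\mu_k$, and \eqref{eq-eps-subd-ReLU} gives $\varphi'(s)\le\min\{1,\alpha_3^\varphi\mu_k/|s|\}$. Multiplying yields $\lambda_{k+1}|g|\le\gamma_k^{-1}(\alpha_3^\varphi+[\alpha_{3,2}^h]_+)\mu_k$.
\end{enumerate}
All three cases are dominated by the stated bound.

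I expect this last case to be the main obstacle. The naive bound $\lambda\le\gamma_k^{-1}$ fails there because $|g|$ can be large, so one must use the decay $\varphi'(s)\le\alpha_3^\varphi\mu_k/|s|$ precisely.
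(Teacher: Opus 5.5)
Your proposal is correct and follows essentially the same route as the paper. Items (i)--(ii) combine the chain-rule identity $\gamma_k^{-1}\nabla g_{\mu_k}(x^{k+1})=\lambda_{k+1}Dc(x^{k+1})^*u^{k+1}$ with \eqref{eq-sub-kkt-k1}, the smoothness bounds, and (for (ii)) $\varphi_{\mu_k}'(s)\ge\frac12$. Item (iii) uses \eqref{eq-eps-subd-ReLU} and Definition~\ref{def-sm}(ii). The one cosmetic difference is that you split \eqref{eq-slack-upb} into three sign cases. The paper instead proves the two one-sided bounds directly: the upper bound via $g\le g^+$, and the lower bound via $h(c(x^{k+1}))\ge s-\alpha_{3,2}^h\mu_k$ together with $-\varphi_{\mu_k}'(s)\,s\le\alpha_3^\varphi\mu_k$. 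These are the same estimates.
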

\begin{proof}
    Let $k\in \mathbb{N}_0$.
    First, we consider item (i).
    The relation \eqref{eq-lambda-def} follows from \eqref{eq-eps-subd-ReLU}, while the relations \eqref{eq-nabla-h-mu-part} and \eqref{eq-patial-barB-k1} follow directly from
    Remark~\ref{ass-sm}, Lemma~\ref{lm-Hmu}, together with \eqref{eq-nabla-h-mu-incl}.

    Now, we turn to establish \eqref{eq-delta-upbd-omega}.
	From \eqref{eq-sub-kkt-k1} and the fact $\xi^{k} \in \partial P_2(x^{k})$, we have
		\begin{align*}
			& -L_{k}\mu_k^{-1}(x^{k+1} - x^{k})
			+  \gamma_{k} (\nabla f(x^{k+1}) - \nabla f(x^{k})) 	
			+   \nabla g_{\mu_{k}}(x^{k+1}) -  \nabla g_{\mu_{k}}(x^{k})
			\\ & \in
			\gamma_{k} \left(\nabla f(x^{k+1}) - \partial P_2(x^{k}) + \partial P_1(x^{k+1})
			+ \gamma_{k}^{-1}\nabla g_{\mu_{k}}(x^{k+1})
			+ \mathcal{N}_{\mathcal{C}}(x^{k+1}) \right),
		\end{align*}
	which, together with $\nabla g_{\mu_{k}}(x^{k+1}) = \varphi_{\mu_k}^{\prime} \left( h_{\mu_k}(c(x^{k+1})) \right) Dc(x^{k+1})^* u^{k+1}$,
    implies that
		\begin{align*}
			\gamma_{k}\delta_{k+1} & \le L_{k}\mu_{k}^{-1} \| x^{k+1} - x^{k} \|
			+ \gamma_{k} \| \nabla f(x^{k+1}) - \nabla f(x^{k}) \|
			\\ & \ \ +    \| \nabla g_{\mu_{k}}(x^{k+1}) -  \nabla g_{\mu_{k}}(x^{k}) \|
			\\ & \le \left(L_{k} + \mu_{k} \gamma_{k} L_{f} + \mu_{k} L_{\mu_{k}} \right) \mu_{k}^{-1} \| x^{k+1} - x^{k} \|
			\\ & \le (M_L + \mu_{0} \widehat{\gamma}_{0} L_{f} + \mu_{0} L_{\mu_{0}} ) \mu_{k}^{-1} \| x^{k+1} - x^{k} \|,
		\end{align*}
	where the second inequality holds thanks to the $L_f$- and $L_{\mu_{k}}$-smoothness of $f$ and $g_{\mu_{k}}$ on $\mathcal{C}$ and the fact that $\{x^{k}\}\subseteq \mathcal{C}$,
	and the last inequality holds because of \eqref{eq-ML}, $0<\gamma_{k} \le \gamma_0 = \widehat{\gamma}_0$, $0<\mu_{k} \le \mu_{0}$
	and \eqref{eq-L-mu}.
	This proves \eqref{eq-delta-upbd-omega}.

	Next, we consider item (ii).
	Assume that $s_k \coloneqq h_{\mu_k}(c(x^{k+1})) > 2 \alpha_{3}^{\varphi} \mu_k$.
    It then follows from \eqref{eq-eps-subd-ReLU}, with $s_k>0$, that
	\begin{equation}\label{eq-lambda-low-bd}
		\varphi_{\mu_k}^{\prime} ( s_k ) \in \partial_{\alpha_3^{\varphi} \mu_k }[\cdot]_{+}( s_k )
		= [\max\{0, 1 - \tfrac{\alpha_3^{\varphi} \mu_k}{s_k} \}, 1]
		\subseteq [\tfrac{1}{2}, 1].
	\end{equation}
	Moreover, by \eqref{eq-sub-kkt-k1},
	there exists a $\zeta^{k+1} \in \partial P_1(x^{k+1})$
	such that
	\begin{align*}
			& -\gamma_{k} (\nabla f(x^{k}) - \xi^{k} + \zeta^{k+1})	
			- L_{k}\mu_{k}^{-1}(x^{k+1} - x^{k})  	
			+  \nabla g_{\mu_{k}}(x^{k+1}) - \nabla g_{\mu_{k}}(x^{k})
			\\ &\in \nabla g_{\mu_{k}}(x^{k+1}) + \mathcal{N}_{\mathcal{C}}(x^{k+1})
			= \varphi_{\mu_k}^{\prime} ( s_k ) Dc(x^{k+1})^* u^{k+1}
			+ \mathcal{N}_{\mathcal{C}}(x^{k+1}),
	\end{align*}
	which, together with \eqref{eq-lambda-low-bd}, implies
		\begin{align*}
			& \frac{1}{2}\mathrm{dist}(0,  Dc(x^{k+1})^* u^{k+1} + \mathcal{N}_{\mathcal{C}}(x^{k+1}))
			\\ & \le \gamma_{k} (\|\nabla f(x^{k}) \|
			+ \|\xi^{k}\| + \|\zeta^{k+1}\| )
			+ L_{k}\mu_{k}^{-1} \| x^{k+1} - x^{k}\|	
			+ \| \nabla g_{\mu_{k}}(x^{k+1}) - \nabla g_{\mu_{k}}(x^{k})\|
			\\& \le \gamma_{k} (\|\nabla f(x^{k}) \|
			+ \|\xi^{k}\| + \|\zeta^{k+1}\| )
			+ (L_{k}+\mu_{k}L_{\mu_{k}}) \omega_{k+1}	
			\\ & \le
			(M_{f} + M_{P_1} + M_{P_2}) \gamma_{k} + (M_{L}+\mu_{0}L_{\mu_{0}} ) \omega_{k+1},
		\end{align*}
	where the second inequality follows from Lemma~\ref{lm-g-mu}(iii)
	and the definition of $\omega_{k+1}$ in \eqref{eq-omega},
	and the last inequality holds thanks to
	\eqref{eq-ML}, \eqref{eq-def-M-fPi}
	and $\mu_{k} L_{\mu_{k}} \le \mu_{0} L_{\mu_{0}}$ (by \eqref{eq-L-mu} and $\mu_k \le \mu_0$).
	This proves \eqref{eq-gB-res}.
	
	Finally, we prove item (iii).
	Suppose that $s_k \coloneqq h_{\mu_k}(c(x^{k+1})) \le 2 \alpha_{3}^{\varphi} \mu_k$.
    By definition, we have
	\begin{align}\label{eq-gmu-xk1}
		g_{\mu_{k}}(x^{k+1}) = \varphi_{\mu_k}( s_k )
		\le [s_k]_{+} + \alpha_{3,2}^{\varphi} \mu_k
		\le (2 \alpha_{3}^{\varphi} + \alpha_{3,2}^{\varphi}) \mu_k,
	\end{align}
    where the first inequality follows from Definition~\ref{def-sm}(ii).
	Furthermore, by \eqref{eq-gmu-appr}, we have
		\begin{align}\label{eq-gB-up}
			g^{+}(x^{k+1})
			\le g_{\mu_{k}}(x^{k+1}) + \alpha_{3, 1} \mu_{k}
			\le (\alpha_{3, 1} + 2 \alpha_{3}^{\varphi} + \alpha_{3,2}^{\varphi}) \mu_k
            = ([\alpha_{3, 1}^{h}]_{+} + 3 \alpha_{3}^{\varphi} ) \mu_k,
		\end{align}
	where the second inequality follows from \eqref{eq-gmu-xk1}, and the equality holds thanks to \eqref{eq-def-alpha}.
	The above two displays give \eqref{eq-feas-upbd}.

    Now, we prove \eqref{eq-slack-upb}. From the definition of $\lambda_{k+1}$ in \eqref{eq-lambda-def}, it follows that
	\begin{equation}\label{eq-lam-g-updb}
		\lambda_{k+1} g(x^{k+1}) \!=\! \gamma_{k}^{-1} \varphi_{\mu_k}^{\prime} ( s_k)
		g(x^{k+1})\!\le\! \gamma_{k}^{-1} \varphi_{\mu_k}^{\prime} ( s_k)
		g^+(x^{k+1}) \!\le\!
		 \gamma_{k}^{-1}([\alpha_{3, 1}^{h}]_{+} + 3 \alpha_{3}^{\varphi}) \mu_k,
	\end{equation}
	where the two inequalities hold thanks to \eqref{eq-eps-subd-ReLU} and \eqref{eq-gB-up}.
    Moreover, it holds that
	\begin{align}\label{eq-lam-g-lbd}
		& \lambda_{k+1} g(x^{k+1})
		= \gamma_{k}^{-1} \varphi_{\mu_k}^{\prime} ( s_k) h(c(x^{k+1}))
		 \ge \gamma_{k}^{-1} \varphi_{\mu_k}^{\prime} ( s_k) \left( s_k - \alpha_{3, 2}^{h} \mu_k\right),
	\end{align}
    where the inequality holds in view of Remark~\ref{ass-sm} and Definition~\ref{def-sm}(ii).
	
	If $s_k < 0$, then using \eqref{eq-eps-subd-ReLU}, we obtain
	\begin{equation*}
		\varphi_{\mu_k}^{\prime} ( s_k ) \in \partial_{\alpha_3^{\varphi} \mu_k }[\cdot]_{+}( s_k )
		= \left[0, \min\{1, \tfrac{\alpha_3^{\varphi} \mu_k}{-s_k} \}\right],
	\end{equation*}
	which implies $-\varphi_{\mu_k}^{\prime} ( s_k )s_k \le \alpha_3^{\varphi} \mu_k.$
	Clearly, this inequality also holds when $s_k \ge 0$.
	Using these facts and \eqref{eq-lam-g-lbd}, we obtain that
	\[
    \lambda_{k+1} g(x^{k+1}) \ge
    -\gamma_k^{-1}\alpha_3^{\varphi}\mu_k- \gamma_{k}^{-1} \varphi_{\mu_k}^{\prime} ( s_k)\alpha_{3, 2}^{h} \mu_k
    \ge - \gamma_{k}^{-1}(\alpha_3^{\varphi} + [\alpha_{3, 2}^{h}]_+ ) \mu_k.
    \]
	Combining this with \eqref{eq-lam-g-updb},
    we conclude that \eqref{eq-slack-upb} holds.
	This completes the proof.
\end{proof}

We show in the next theorem that there is a positive lower bound on the $\{\gamma_k\}$ generated by Algorithm~\ref{alg-sl-sm-esqm}. This observation is consistent with the behavior of the corresponding sequence generated by ESQM and its variants in \cite{13Auslender,15Auslender,25ZPX}; see \cite[Theorem~3.1(b)]{13Auslender}, \cite[Corollary~1]{15Auslender} and \cite[Theorem~4.2]{25ZPX}.

\begin{theorem}[Lower bound for $\{\gamma_k\}$]\label{th-gamma-bd}
	Consider \eqref{opt-dc-hc}.
	Suppose that Assumptions~\ref{ass-gen}, \ref{ass-cq} and~\ref{ass-mu} hold.
	Let $\{\gamma_{k}\}$ and $\{t_k\}$ be generated by Algorithm~\ref{alg-sl-sm-esqm}.
	Let $\eta_{1}$ be given in Lemma~\ref{lm-eta-cq}.
	Let $L_{\mu}$ and $M_{L}$ be given in \eqref{eq-L-mu} and \eqref{eq-ML}, respectively.
    Let $M_{f}$, $M_{P_1}$, and $M_{P_2}$ be defined by \eqref{eq-def-M-fPi}.
	Then it holds that, for all $k \in \mathbb{N}_{0}$,
	\begin{equation}\label{eq-gamma-pos-low-bd}
		\gamma_{k} \ge \eta_{\gamma} \coloneqq \min\left\{ \frac{\eta_{1}}{2 M_{\widehat\gamma}(M_{f}
			+ M_{P_1} + M_{P_2} + c_2(M_{L}+\mu_{0}L_{\mu_{0}}))},
		{\widehat \gamma}_{k_0+1} \right\} > 0,
	\end{equation}
	where
	\begin{align}
		k_{0}  \coloneqq \mathrm{min}\left\{k \in \mathbb{N}_{0} :  \mu_{k} \le \eta_{1} /  \max\{
		\alpha_{3}^{h}, [\alpha_{3,2}^{h}]_+
		\}  \right\}\in [0,\infty). \label{eq-k0-def}
	\end{align}
	Moreover, the index set $\mathfrak{I} \coloneqq \{k \in \mathbb{N}_{0} : t_{k+1} \neq t_{k}\}$ is finite and satisfies
	\begin{equation}\label{eq-I-bd}
		|\mathfrak{I}| \le \max\{t\in \mathbb{N}_0 : \widehat{\gamma}_{t} \ge \eta_{\gamma}\} =: M_{\bar t} \in [k_0+1,\infty).
	\end{equation}
\end{theorem}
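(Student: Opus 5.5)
The plan is to show that, once $k\ge k_0$, the update condition \eqref{eq-gamma-cond} cannot hold at an iteration where $\gamma_k$ is already small. This makes $\{\gamma_k\}$ unable to decrease below the threshold in \eqref{eq-gamma-pos-low-bd}. The argument is by contradiction with the reformulated CQ in Lemma~\ref{lm-eta-cq}.

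Fix $k\ge k_0$ at which both conditions in \eqref{eq-gamma-cond} hold. The first condition, $h_{\mu_k}(c(x^{k+1}))>2\alpha_3^\varphi\mu_k>0$, together with Definition~\ref{def-sm}(ii) (via Remark~\ref{ass-sm}), gives
\[
g(x^{k+1})\ge h_{\mu_k}(c(x^{k+1}))-[\alpha_{3,2}^h]_+\mu_k>-[\alpha_{3,2}^h]_+\mu_k\ge -\eta_1 .
\]
The last inequality uses $\mu_k\le\mu_{k_0}$ and the definition of $k_0$ in \eqref{eq-k0-def}. Moreover, by \eqref{eq-nabla-h-mu-part}, $u^{k+1}\in\partial_{\alpha_3^h\mu_k}h(c(x^{k+1}))\subseteq\partial_{\eta_1}h(c(x^{k+1}))$, again because $\alpha_3^h\mu_k\le\eta_1$. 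Lemma~\ref{lm-eta-cq} then yields $\mathrm{dist}(0,Dc(x^{k+1})^*u^{k+1}+\mathcal{N}_{\mathcal{C}}(x^{k+1}))\ge\eta_1$.

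On the other side, Lemma~\ref{lm-properties}(ii) applies because of the first condition. The second condition in \eqref{eq-gamma-cond} gives $\omega_{k+1}\le c_2\gamma_k$. Substituting into \eqref{eq-gB-res} gives
\[
\eta_1\le 2\bigl(M_f+M_{P_1}+M_{P_2}+c_2(M_L+\mu_0L_{\mu_0})\bigr)\gamma_k .
\]
Hence, for $k\ge k_0$, an update $t_{k+1}=t_k+1$ can occur only if $\gamma_k\ge \eta_1/(2(M_f+\cdots))=:M_{\widehat\gamma}\bar\eta$, where $\bar\eta$ denotes the first entry of the min in \eqref{eq-gamma-pos-low-bd}.

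Next, I turn this into the lower bound by induction on $k$. For $k\le k_0+1$, at most $k$ updates have occurred, so $t_k\le k\le k_0+1$ and $\gamma_k=\widehat\gamma_{t_k}\ge\widehat\gamma_{k_0+1}\ge\eta_\gamma$. For $k\ge k_0+1$, suppose $\gamma_k\ge\eta_\gamma$. If no update occurs at $k$, then $\gamma_{k+1}=\gamma_k$. If an update occurs, then $\gamma_k=\widehat\gamma_{t_k}\ge M_{\widehat\gamma}\bar\eta$, and \eqref{eq-gamma-hat} gives $\gamma_{k+1}=\widehat\gamma_{t_k+1}\ge\widehat\gamma_{t_k}/M_{\widehat\gamma}\ge\bar\eta\ge\eta_\gamma$. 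This proves \eqref{eq-gamma-pos-low-bd}.

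Since $\widehat\gamma_t\to0$ is strictly decreasing, the set $\{t:\widehat\gamma_t\ge\eta_\gamma\}$ is a finite initial segment. Every $t_k$ lies in it because $\widehat\gamma_{t_k}=\gamma_k\ge\eta_\gamma$. As $t_k$ increases by one exactly at indices in $\mathfrak{I}$, we get $|\mathfrak{I}|=\lim_k t_k\le M_{\bar t}$. Finally, $\eta_\gamma\le\widehat\gamma_{k_0+1}$ implies $M_{\bar t}\ge k_0+1$.

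The main obstacle is the bookkeeping that makes the CQ applicable. One must check that the first condition in \eqref{eq-gamma-cond} indeed forces $g(x^{k+1})\ge-\eta_1$ through the smoothing error $[\alpha_{3,2}^h]_+\mu_k$. One must also check that $k_0$ is finite, which holds since $\mu_k\to0$ by Assumption~\ref{ass-mu}.
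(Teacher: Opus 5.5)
Your proof is correct and follows essentially the paper's argument. It uses the same CQ-based estimate $\eta_1\le 2(M_f+M_{P_1}+M_{P_2}+c_2(M_L+\mu_0L_{\mu_0}))\gamma_k$ at update indices beyond $k_0$, the same base case via $t_k\le k\le k_0+1$, and the same use of \eqref{eq-gamma-hat} to pass to $\gamma_{k+1}$. The only difference is organizational: you run a forward induction on $\gamma_k$ and then read off finiteness of $\mathfrak{I}$, whereas the paper first bounds $\gamma_{k+1}$ for $k\in\mathfrak{I}$, deduces that $\mathfrak{I}$ is finite, and then uses that $\{\gamma_k\}$ is nonincreasing.
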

\begin{remark}\label{rem:mugamma}
If we choose the sequences $\{\mu_k\}$ and $\{\widehat{\gamma}_k\}$ in Algorithm~\ref{alg-sl-sm-esqm} a priori according to Assumption~\ref{ass-mu}, then the constants
$k_0$ in \eqref{eq-k0-def}, $\eta_{\gamma}$ in \eqref{eq-gamma-pos-low-bd}, and $M_{\bar t}$ in \eqref{eq-I-bd}
are independent of the sequences $\{x^{k}\}$ and $\{\gamma_k\}$ generated by Algorithm~\ref{alg-sl-sm-esqm}, and can be computed explicitly via their defining formulae prior to running the algorithm.
\end{remark}
\begin{proof}[Proof of Theorem~\ref{th-gamma-bd}]
	Since $\lim_{k\to\infty}\mu_k = 0$, we see that $k_{0}$ is well defined (i.e., finite). In addition, since $\lim_{k\to\infty}\widehat{\gamma}_k = 0$ and $\widehat \gamma_{k_0 + 1}\ge \eta_\gamma$, we see that $M_{\bar t}$ is well defined and $M_{\bar t}\ge k_0 + 1$.
	If $\mathfrak{I}= \emptyset$, then $\gamma_{k} \equiv {\widehat \gamma}_{0} \ge {\widehat \gamma}_{k_0+1}$ for all $k\in \mathbb{N}_{0}$ and hence \eqref{eq-gamma-pos-low-bd} and \eqref{eq-I-bd} hold.
	
	Now, suppose that $\mathfrak{I} \neq \emptyset$.
	We claim that $\gamma_{k+1} \ge \eta_{\gamma}$ for all $k\in \mathfrak{I}$.
	Granting this, we see that $\sup_{k\in \mathfrak{I}} t_{k+1} < \infty$ (otherwise, we have
	$\gamma_{k+1} = \widehat{\gamma}_{t_{k+1}} \to 0$ as $k\to \infty$, a contradiction).
	Combining this with {\bf Step 4} in Algorithm~\ref{alg-sl-sm-esqm},
	we obtain that $\mathfrak{I}$ is finite and
	$|\mathfrak{I}| = \sup_{k\in \mathfrak{I}} t_{k+1} = t_{\bar{k}+1}$,
	where $\bar{k} = \max \mathfrak{I}$.
	Since $\widehat{\gamma}_{t_{\bar{k}+1}} = \gamma_{{\bar k}+1} \ge \eta_{\gamma}$,
	it follows from the definition of $M_{\bar t}$ in \eqref{eq-I-bd}
	that $t_{\bar{k}+1} \le M_{\bar t}$,
    which implies \eqref{eq-I-bd}.
    Moreover, from the definition of $\mathfrak{I}$,
	we have $t_{k+1} \equiv t_{\bar{k}+1}$ for all $k\ge \bar{k}$,
	which implies that
	$\gamma_{k+1} = \widehat{\gamma}_{t_{k+1}}
	= \widehat{\gamma}_{t_{\bar{k} + 1}} \ge \eta_{\gamma}$
	for all $k\ge \bar{k}$.
	Since $\{\gamma_{k}\}$ is nonincreasing,
	we have that
	$\gamma_{k} \ge \gamma_{\bar{k}+1} \ge \eta_{\gamma}$ for all $k\le \bar{k}$,
    which implies \eqref{eq-gamma-pos-low-bd}.
	
	We now show that  $\gamma_{k+1} \ge \eta_{\gamma}$ for all $k\in \mathfrak{I}$. To this end, let $k$ be any integer in $\mathfrak{I}$.
	If $k\le k_{0}$, then we have
	\begin{equation}\label{eq-gamma-k0}
		\gamma_{k+1} = {\widehat \gamma}_{t_{k+1}}
		\ge {\widehat \gamma}_{t_{k_0+1}}
		\ge {\widehat \gamma}_{k_0+1},
	\end{equation}
    where the last inequality holds thanks to $t_{k_0+1}\leq k_0+1$ and the fact that $\{{\widehat \gamma}_k\}$ is decreasing.
	
	Now, we suppose that $k>k_{0}$.	
	From {\bf Step 4} in Algorithm~\ref{alg-sl-sm-esqm}
	and the fact $k\in \mathfrak{I}$, it follows that
	\eqref{eq-gamma-cond} holds, which, together with the definition of $\omega_{k+1}$ in \eqref{eq-omega}, implies that
	\begin{equation}\label{eq-omega-upbd}
		h_{\mu_{k}}(c(x^{k+1}) )
		> 2 \alpha_{3}^{\varphi} \mu_k
		\quad \mbox{and} \quad
		\omega_{k+1} = \mu_{k}^{-1}\|x^{k+1} - x^{k}\| \le c_2 \gamma_{k}.
	\end{equation}

    We have from \eqref{eq-nabla-h-mu-part} that
	\begin{equation}\label{eq-v-eta2}
		u^{k+1} = \nabla h_{\mu_k}(c(x^{k+1}))\in \partial_{\alpha_{3}^{h}\mu_k} h(c(x^{k+1}))
		\subseteq \partial_{\eta_{1}} h(c(x^{k+1})),
	\end{equation}
	where the last inclusion holds because of $\alpha_{3}^{h} \mu_k \le \alpha_{3}^{h} \mu_{k_0} \le \eta_{1}$ in view of the definition of $k_0$ in \eqref{eq-k0-def}.
	In addition, we have
	\begin{align*}
		g(x^{k+1}) & = h(c(x^{k+1}))  \ge h_{\mu_k}(c(x^{k+1})) - \alpha_{3, 2}^{h} \mu_k
        \ge  - [\alpha_{3, 2}^{h}]_+ \mu_{k}
		 \ge  - [\alpha_{3, 2}^{h}]_+ \mu_{k_0} \ge - \eta_{1},
	\end{align*}
	where the first inequality follows from Definition~\ref{def-sm}(ii), the second inequality holds because of the first relation in \eqref{eq-omega-upbd}, the third inequality holds because $\mu_k \le \mu_{k_0}$,  and
	the last inequality follows from the definition of $k_0$ in \eqref{eq-k0-def}.
	
	Combining the last display with \eqref{eq-v-eta2}, and invoking Lemma~\ref{lm-eta-cq},
	we conclude that
	\begin{align}
		\eta_1 & \le \mathrm{dist}(0,  Dc(x^{k+1})^* u^{k+1} + \mathcal{N}_{\mathcal{C}}(x^{k+1}))\notag
		\\ & \overset{\mathrm{(a)}}{\le} 2(M_{f} + M_{P_1} + M_{P_2}) \gamma_{k} + 2(M_{L}+\mu_{0}L_{\mu_{0}}) \omega_{k+1}\notag
		\\ & \overset{\mathrm{(b)}}{\le}
		2(M_{f} + M_{P_1} + M_{P_2} + c_2(M_{L}+\mu_{0}L_{\mu_{0}})) \gamma_k\notag
		\\ & \overset{\mathrm{(c)}}{\le}
		2 M_{\widehat\gamma} (M_{f} + M_{P_1} + M_{P_2}
		+ c_2(M_{L}+\mu_{0}L_{\mu_{0}})) \gamma_{k+1},\label{eq-gamma-lb->k0}
	\end{align}
	where (a) follows from \eqref{eq-gB-res},
	(b) holds thanks to
	the second relation in \eqref{eq-omega-upbd},
	and (c) holds by the fact that
	$$
	\gamma_{k} = \widehat{\gamma}_{t_k}
	\le M_{\widehat\gamma} \widehat{\gamma}_{t_k + 1}
	\le M_{\widehat\gamma} \widehat{\gamma}_{t_{k + 1}}
	=  M_{\widehat\gamma} \gamma_{k+1};
	$$
	here the first inequality is a consequence of \eqref{eq-gamma-hat}.
	
	Finally, combining \eqref{eq-gamma-k0} and \eqref{eq-gamma-lb->k0} with
	the definition of $\eta_{\gamma}$ in \eqref{eq-gamma-pos-low-bd}, we conclude that
	$\gamma_{k+1} \ge \eta_{\gamma}$ for all $k\in \mathfrak{I}$.
	This completes the proof.
\end{proof}

We next present our complexity bound for the iterates generated by Algorithm~\ref{alg-sl-sm-esqm}.
Note that the guarantee we establish is only valid when $K$ exceeds some threshold involving (upper bounds of) $M_{\bar t}$ in \eqref{eq-I-bd} and $Q_0$ in \eqref{eq-Q-def}. Notice that (an upper bound of) $M_{\bar t}$ can be computed based on problem data and algorithmic parameters explicitly {\em prior to} running the algorithm if $\{\mu_k\}$ is chosen a priori as discussed in Remark~\ref{rem:mugamma}, while (an upper bound of) $Q_0$ can be computed based on the initial point and an estimate of a lower bound for $\inf_{x\in \mathbb{X}}\psi(x)$, which are also obtainable {\em without} running the algorithm.

\begin{theorem}[Complexity bound in nonconvex setting]\label{th-complexity-S-mu}
	Consider \eqref{opt-dc-hc}.
	Suppose that Assumptions~\ref{ass-gen}, \ref{ass-cq} and~\ref{ass-mu} hold.
	Let $\{x^{k}\}$ and $\{\gamma_k\}$ be generated by Algorithm~\ref{alg-sl-sm-esqm}.
	Let $M_1$, $\eta_{\gamma}$ and $M_{\bar t}$ be given in \eqref{eq-omega}, \eqref{eq-gamma-pos-low-bd} and \eqref{eq-I-bd}, respectively.
    Then
    \begin{equation}\label{definition_M2}
    M_2 \coloneqq 1 + 2 (M_{\bar t} + 1) (Q_{0} + [\alpha_{3,1}]_+\mu_0)/c_1 > 0,
    \end{equation}
    where $Q_0$ is defined in \eqref{eq-Q-def} and $\alpha_{3,1}$ is given in \eqref{eq-def-alpha}.
	Moreover, for any $K\in \mathbb{N}_0$ satisfying
	\begin{equation}\label{eq-K-cond}
		K\ge 2 M_{\bar t}
		\quad \mbox{and} \quad
		S_{K}^{-1} \le
		(\eta_{\gamma}c_2)^{2}/ M_2,
	\end{equation}
	there exists an integer $\hat{k} \in [\lceil K/2 \rceil - M_{\bar t}, K]$ such that
	\begin{align}
		& x^{\hat{k}+1} \in \mathcal{C}, \quad \lambda_{{\hat k}+1} \in  [0, \eta_{\gamma}^{-1}],\quad
        u^{\hat{k}+1} \in
		\partial_{\alpha_3^{h}
			\mu_{\lceil K/2\rceil - M_{\bar t}}} h(c(x^{\hat{k}+1})),
		\label{eq-compl-x-v}
		\\ & \|x^{\hat{k}+1} - x^{\hat{k}}\| \le \sqrt{ M_2}\mu_{\lceil K/2\rceil - M_{\bar t}} S_{K}^{-\frac{1}{2}},
		\quad \delta_{\hat{k}+1} \le \eta_{\gamma}^{-1}M_1 \sqrt{ M_2}S_{K}^{-\frac{1}{2}},
		\label{eq-compl-delta}
		\\ &
		g^{+}(x^{\hat{k} + 1})
		\le (3 \alpha_{3}^{\varphi} + [\alpha_{3, 1}^{h}]_{+}) \mu_{\lceil K/2\rceil - M_{\bar t}}, \quad
		 |\lambda_{\hat{k} +1}g(x^{\hat{k} + 1})|
		\le M_3 \mu_{\lceil K/2\rceil - M_{\bar t}},
		\label{eq-compl-slack}
	\end{align}
	where $\{\lambda_{k+1}\}$, $\{u^{k+1}\}$ and $\{\delta_{k+1}\}$ are given respectively in \eqref{eq-lambda-def},
    \eqref{eq-nabla-h-mu-part} and \eqref{eq-delta-upbd-omega}, and
    \begin{equation}\label{eq-def-M3}
        M_3 \coloneqq(3 \alpha_{3}^{\varphi} + [\alpha_{3, 1}^{h}]_{+} + [\alpha_{3,2}^{h}]_+)/\eta_{\gamma}.
    \end{equation}
\end{theorem}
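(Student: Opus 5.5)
The plan is a telescoping argument on the merit function $Q_k$ from Lemma~\ref{lm-well-def}(iv), run over the window $\{\lceil K/2\rceil,\dots,K\}$. Then I pick an index in this window where both conditions in \eqref{eq-gamma-cond} fail, and apply Lemma~\ref{lm-properties}.

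\textbf{Step 1: positivity of $M_2$ and a uniform bound on $Q_k$.} Positivity of $M_2$ follows once $Q_0+[\alpha_{3,1}]_+\mu_0\ge 0$. Since $\psi\ge\bar m$ and $g_{\mu_0}\ge g^+-\alpha_{3,1}\mu_0\ge -[\alpha_{3,1}]_+\mu_0$ by \eqref{eq-gmu-appr}, we get $Q_0\ge -[\alpha_{3,1}]_+\mu_0$ (using $\alpha_4\ge 0$). The same argument gives $Q_k\ge -[\alpha_{3,1}]_+\mu_k\ge -[\alpha_{3,1}]_+\mu_0$ for every $k$. Summing \eqref{eq-des-Q} from $k=0$ to $K$ then yields
\[
\sum_{k=0}^{K}\tfrac{c_1}{2\mu_k}\|x^{k+1}-x^k\|^2\le Q_0+[\alpha_{3,1}]_+\mu_0 .
\]
Writing the left side as $\tfrac{c_1}{2}\sum_k\mu_k\omega_{k+1}^2$, we obtain $\sum_{k=0}^K\mu_k\omega_{k+1}^2\le (M_2-1)/(2(M_{\bar t}+1))$.

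\textbf{Step 2: choosing $\hat k$.} By Theorem~\ref{th-gamma-bd}, $|\mathfrak I|\le M_{\bar t}$ and $\gamma_k\ge\eta_\gamma$. The window $\{\lceil K/2\rceil-M_{\bar t},\dots,K\}$ lies in $\mathbb N_0$ because $K\ge 2M_{\bar t}$, and it contains at most $M_{\bar t}$ indices of $\mathfrak I$. I plan to show that some $\hat k$ in $[\lceil K/2\rceil-M_{\bar t},K]\setminus\mathfrak I$ satisfies $\omega_{\hat k+1}^2\le M_2/S_K$.

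Suppose instead that every non-$\mathfrak I$ index $k$ in the window has $\omega_{k+1}^2> M_2/S_K$. Since $\mu_k$ is nonincreasing, discarding the at most $M_{\bar t}$ indices of $\mathfrak I$ and shifting leaves a sum of $\mu_k$ over the remaining indices that is at least $\sum_{k=\lceil K/2\rceil}^{K}\mu_k=S_K$. Hence $\sum\mu_k\omega_{k+1}^2> M_2$, which contradicts Step 1.

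This counting is the main obstacle. It must be done carefully so that exactly the extended window of $M_{\bar t}$ extra indices compensates for the removed ones, and so that the constants $M_2$ and $(M_{\bar t}+1)$ come out as stated; the factor $2(M_{\bar t}+1)$ in $M_2$ leaves slack.

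\textbf{Step 3: verifying the conclusions.} At $\hat k$ we have $\omega_{\hat k+1}\le\sqrt{M_2}S_K^{-1/2}\le\eta_\gamma c_2\le c_2\gamma_{\hat k}$ by \eqref{eq-K-cond}. So the second condition in \eqref{eq-gamma-cond} holds, and since $\hat k\notin\mathfrak I$, the first must fail: $h_{\mu_{\hat k}}(c(x^{\hat k+1}))\le 2\alpha_3^\varphi\mu_{\hat k}$. Lemma~\ref{lm-properties}(iii), together with $\gamma_{\hat k}\ge\eta_\gamma$ and $\mu_{\hat k}\le\mu_{\lceil K/2\rceil-M_{\bar t}}$, gives \eqref{eq-compl-slack}.

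Next, $\|x^{\hat k+1}-x^{\hat k}\|=\mu_{\hat k}\omega_{\hat k+1}$ gives the first bound in \eqref{eq-compl-delta}. Combining \eqref{eq-delta-upbd-omega} with $\gamma_{\hat k}^{-1}\le\eta_\gamma^{-1}$ gives the second. Finally, \eqref{eq-lambda-def} and \eqref{eq-nabla-h-mu-part}, with monotonicity of $\epsilon$-subdifferentials in $\epsilon$, give \eqref{eq-compl-x-v}; membership $x^{\hat k+1}\in\mathcal C$ is Lemma~\ref{lm-well-def}(i).
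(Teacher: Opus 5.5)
Your proof is correct. Steps 1 and 3 coincide with the paper's; the difference lies in how you locate $\hat k$.

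The paper works with sliding-window sums. Using $\mu_k\le\mu_j$ for $j\le k$, it bounds $\mu_k\sum_{j=k-M_{\bar t}}^{k}\omega_{j+1}^2$ by $\frac{2}{c_1}(Q_{k-M_{\bar t}}-Q_{k+1})$ and sums over $k=\lceil K/2\rceil,\dots,K$. Each $Q$-difference is then counted up to $M_{\bar t}+1$ times, which is exactly where the factor $M_{\bar t}+1$ in $M_2$ comes from. This yields a block $\{\tilde k-M_{\bar t},\dots,\tilde k\}$ of $M_{\bar t}+1$ consecutive indices with every $\omega_{l+1}^2\le M_2/S_K$. Only then does the paper use $|\mathfrak I|\le M_{\bar t}$ to pick $\hat k$ in that block outside $\mathfrak I$.

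You instead telescope once over $\{0,\dots,K\}$ and run a weighted pigeonhole on the extended window $\{\lceil K/2\rceil-M_{\bar t},\dots,K\}$ with the $\mathfrak I$-indices removed. Your claim that the surviving weights sum to at least $S_K$ is right, and you should write out the one-line justification rather than leaving it as ``shifting''. List the survivors as $r_1>r_2>\cdots$; there are at least $K-\lceil K/2\rceil+1$ of them, and $r_i\le K-i+1$. Hence $\mu_{r_i}\ge\mu_{K-i+1}$ for $i=1,\dots,K-\lceil K/2\rceil+1$.

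Your route is shorter and shows the factor $M_{\bar t}+1$ is unnecessary. The theorem stays true with $M_2$ replaced by the smaller constant $1+2(Q_0+[\alpha_{3,1}]_+\mu_0)/c_1$, in both the hypothesis \eqref{eq-K-cond} and the conclusions. The paper's route gives the extra, unused information that a whole consecutive block of steps is small.

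There is one harmless arithmetic slip. From $\sum_{k=0}^K\frac{c_1}{2}\mu_k\omega_{k+1}^2\le Q_0+[\alpha_{3,1}]_+\mu_0$ you get $\sum_{k=0}^K\mu_k\omega_{k+1}^2\le (M_2-1)/(M_{\bar t}+1)$, not $(M_2-1)/(2(M_{\bar t}+1))$. Your contradiction only needs this bound to be strictly below $M_2$, so nothing breaks.
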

\begin{proof}
    Notice that for all $k\in \mathbb{N}_0$, we have
    \begin{align}
    Q_0\overset{\rm (a)}\ge Q_k &= \gamma_{k}(\psi(x^{k}) - \bar{m}) +  g_{\mu_{k}}(x^{k}) + \alpha_{4}  \mu_{k} \overset{\rm (b)}\ge g_{\mu_{k}}(x^{k}) + \alpha_{4}  \mu_k\notag\\
    &\overset{\rm (c)}\ge g^+(x^k) + (\alpha_4 - \alpha_{3,1}) \mu_k
    \ge -[\alpha_{3,1}]_+\mu_k
    \overset{\rm (d)}\ge -[\alpha_{3,1}]_+\mu_0,\label{Qk_relation}
    \end{align}
    where (a) follows from \eqref{eq-des-Q}, (b) follows from the definition of $\bar m$ and the fact that $\{x^k\}\subset {\cal C}$, (c) follows from \eqref{eq-gmu-appr}, and (d) holds because $\mu_k\le \mu_0$. Hence, $M_2$ defined in \eqref{definition_M2} is positive.

	We fix any $K\in \mathbb{N}_0$ satisfying \eqref{eq-K-cond} from now on. From Lemma~\ref{lm-well-def}(i), we have $x^{k} \in \mathcal{C}$ for all $k\in \mathbb{N}_{0}$.
	Moreover, from \eqref{eq-gamma-pos-low-bd} and \eqref{eq-lambda-def}, it follows that $\lambda_{k+1} \in [0, \eta_{\gamma}^{-1}]$ for all $k\in \mathbb{N}_{0}$.
	
	Now, let $\{Q_{k}\}$ and $\{\omega_{k+1}\}$ be defined as in \eqref{eq-Q-def} and \eqref{eq-omega}, respectively.
	Since $\{\mu_k\}$ is nonincreasing, we have, for each $k \ge M_{\bar t}$,
	\begin{equation*}
		\mu_{k}  \sum_{j=k-M_{\bar t}}^{k} \omega_{j+1}^2
		\le \sum_{j=k-M_{\bar t}}^{k} \mu_j \omega_{j+1}^2
		\le \frac{2}{c_1}\sum_{j=k-M_{\bar t}}^{k} (Q_{j} - Q_{j+1})
		= \frac{2}{c_1}(Q_{k-M_{\bar t}} - Q_{k+1}),
	\end{equation*}
	where the second inequality is deduced from \eqref{eq-des-Q}.	
	Since $K\ge 2M_{\bar t}$, we can sum the last display for $k=\lceil K/2 \rceil, \ldots, K$ and rearrange terms to obtain		
		\begin{align}\label{eq-sum-res-dx}
			\min_{\lceil K/2\rceil \le k \le K}  \sum_{j=k-M_{\bar t}}^{k} \omega_{j+1}^2&
            \le \frac{2 \sum_{k=\lceil K/2\rceil}^K (Q_{k-M_{\bar t}} - Q_{k+1})}
			{c_1 S_{K}}\notag\\
            &= \frac{2 \sum_{j=0}^{M_{\bar t}}
				(Q_{\lceil K/2 \rceil - M_{\bar t} + j} - Q_{K - M_{\bar t} + j+1})}
			{c_1 S_{K}}
			\le \frac{ M_2}
			{S_{K}},
		\end{align}
	where the last inequality follows from \eqref{Qk_relation} and the definition of $M_2$ in \eqref{definition_M2}.

	Consequently, we see from \eqref{eq-sum-res-dx} that there exists an integer
	$\tilde{k}\in [\lceil K/2\rceil, K]$ such that
		\begin{align*}
			\textstyle \omega_{l+1}^2 \le \sum_{j=\tilde{k}-M_{\bar t}}^{\tilde{k}} \omega_{j+1}^2
			\le  M_2 / S_{K}
			\quad \forall l \in \{ \tilde{k} - M_{\bar t}, \tilde{k} - M_{\bar t}+1, \ldots, \tilde{k}\}.
		\end{align*}
	
	Now, let $\mathfrak{I} = \{k\in \mathbb{N}_{0} : t_{k+1} \neq t_{k}\}$.
	From \eqref{eq-I-bd}, we have $|\mathfrak{I}| \le M_{\bar t}$.
	Thus, there exists a $\hat{k} \in \{\tilde{k}-M_{\bar t}, \ldots, \tilde{k}\}\setminus \mathfrak{I} \neq \emptyset$.
	Combining this with the last display and the nonincreasing property of $\{\mu_k\}$,
	we have
	\begin{equation}
		\label{eq-omega-mu-khat}
		\omega_{\hat{k} +1} \le \sqrt{ M_2}S_{K}^{-\frac{1}{2}}
		\quad \mbox{and} \quad
		\mu_{\hat{k}} \le \mu_{\lceil K/2\rceil - M_{\bar t}}.
	\end{equation}
	From \eqref{eq-delta-upbd-omega}, we deduce that
	\begin{align*}
		& \delta_{\hat{k}+1}
		\le M_1 \omega_{\hat{k} + 1} \gamma_{\hat{k}}^{-1}
		\le \eta_{\gamma}^{-1}M_1 \sqrt{ M_2}S_{K}^{-\frac{1}{2}},		
	\end{align*}
	where the last inequality holds thanks to \eqref{eq-gamma-pos-low-bd} and \eqref{eq-omega-mu-khat}. This proves the second relation in \eqref{eq-compl-delta}.
	Furthermore, using \eqref{eq-nabla-h-mu-part} and the second relation in \eqref{eq-omega-mu-khat}, we obtain the third relation in \eqref{eq-compl-x-v}.
	
	Moreover, from the definition of $\{\omega_{k+1}\}$ in \eqref{eq-omega},
	we have
	$$
	\|x^{\hat{k} + 1} - x^{\hat k}\| = \mu_{\hat{k}} \omega_{\hat{k}+1}
	\le \sqrt{ M_2}\mu_{\lceil K/2\rceil - M_{\bar t}} S_{K}^{-\frac{1}{2}},
	$$
	where the last inequality holds because of \eqref{eq-omega-mu-khat}. This proves the first relation in \eqref{eq-compl-delta}.
	
	Finally, using \eqref{eq-omega-mu-khat} again,
	we have
	\begin{equation*}
		\|x^{\hat{k} + 1} - x^{\hat k}\| = \mu_{\hat{k}} \omega_{\hat{k}+1}
		\le \mu_{\hat{k}} \sqrt{ M_2}S_{K}^{-\frac{1}{2}}
		\le c_2 \eta_{\gamma} \mu_{\hat{k}} \le
		c_2 \gamma_{\hat{k}} \mu_{\hat{k}},
	\end{equation*}
	where the second inequality holds thanks to \eqref{eq-K-cond}, and the
	last inequality holds because of \eqref{eq-gamma-pos-low-bd}.
	Since $\hat{k} \notin \mathfrak{I}$, from
	{\bf Step 4} in Algorithm~\ref{alg-sl-sm-esqm}
	and the last display,
	we see that $h_{\mu_{\hat k}}(c(x^{\hat k+1}) )
	\le 2 \alpha_{3}^{\varphi} \mu_{\hat k}$,
	which, together with Lemma~\ref{lm-properties}(iii), \eqref{eq-gamma-pos-low-bd} and
	\eqref{eq-omega-mu-khat}, proves \eqref{eq-compl-slack}.
    This completes the proof.
\end{proof}

\begin{remark}[Limit points of $\{x^k\}$ from Algorithm~\ref{alg-sl-sm-esqm}]\label{rm-stationary-pen}
    Consider \eqref{opt-dc-hc}.
	Suppose that Assumptions~\ref{ass-gen}, \ref{ass-cq} and~\ref{ass-mu} hold.
	Let $\{x^{k}\}$ and $\{\gamma_{k}\}$ be generated by Algorithm~\ref{alg-sl-sm-esqm}.
    From \eqref{eq-gamma-pos-low-bd} and the nonincreasing properties of $\{\gamma_k\}$, it follows that
    $\bar \gamma \coloneqq \lim_{k\to\infty} \gamma_k \in [\eta_{\gamma}, \infty).$
    We claim that there exists an accumulation point $\bar{x}$ of $\{x^{k}\}$ being a stationary point of \eqref{opt-dc-hc}, and moreover, it holds that
    \begin{equation*}
         0 \in \nabla f(\bar x) + \partial P_1(\bar x) - \partial P_2(\bar x)
         + \bar \gamma^{-1} Dc(\bar x)^* \bar v + \mathcal{N}_{\mathcal{C}}(\bar x)
    \end{equation*}
    for some $\bar v \in \partial h^{+} (c(\bar x))$.

    Indeed, we have that $\lambda_{k+1} Dc(x^{k+1})^* u^{k+1} = \gamma_{k}^{-1} Dc(x^{k+1})^* v^{k+1}$ for all $k \in \mathbb{N}_{0}$,
    where $\lambda_{k+1}$, $u^{k+1}$ and $v^{k+1}$ are given in
    \eqref{eq-lambda-def}, \eqref{eq-nabla-h-mu-part} and \eqref{eq-patial-barB-k1}, respectively.
    Combining this fact with \eqref{eq-patial-barB-k1} and the definition of
    $\{\delta_{k+1}\}$ in \eqref{eq-delta-upbd-omega},
    and invoking Theorem~\ref{th-complexity-S-mu}, the desired conclusion follows immediately.
\end{remark}

\begin{corollary}[Complexity for obtaining $(\epsilon_1, \epsilon_2)$-KKT points]\label{corol-compl-nconv}
	Consider \eqref{opt-dc-hc}.
	Suppose that Assumptions~\ref{ass-gen}, \ref{ass-cq} and~\ref{ass-mu} hold.
	Let $\{x^{k}\}$ be generated by Algorithm~\ref{alg-sl-sm-esqm}.
    Let $M_1$, $\eta_{\gamma}$, $M_{\bar t}$, $M_2$ and $M_3$ be given in \eqref{eq-omega},
    \eqref{eq-gamma-pos-low-bd}, \eqref{eq-I-bd}, \eqref{definition_M2} and \eqref{eq-def-M3}, respectively.
	Let $\epsilon_1>0$, $\epsilon_2>0$ and
	integer $K\ge 2 M_{\bar t}$ satisfy
	\begin{align}\label{eq-SK-cond}
		& S_{K}^{-1} \le M_2^{-1} \eta_{\gamma}^2  \min\left\{ c_2^{2}, M_1^{-2} \epsilon^2_{1}\right\},
    \\ & \mu_{\lceil K/2\rceil - M_{\bar t}}
    \le \min\left\{ \frac{1}{\alpha_3^{h}}, \frac{1}{M_3}, \frac{M_1}{\eta_{\gamma}},
    \frac{1}{3\alpha_{3}^{\varphi} + [\alpha_{3, 1}^{h}]_{+}}
    \right\} \epsilon_2 \eqcolon M_4 \epsilon_2.\label{eq-muK-cond}
	\end{align}
	Then there exists an integer $\hat{k} \in [\lceil K/2 \rceil - M_{\bar t}, K]$ such that
	$x^{\hat{k}+1}$ is an $(\epsilon_1, \epsilon_2)$-KKT point of \eqref{opt-dc-hc}.
\end{corollary}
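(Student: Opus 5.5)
The plan is to apply Theorem~\ref{th-complexity-S-mu} directly and check the requirements of Definition~\ref{def_AKKT} one at a time. Condition \eqref{eq-SK-cond} gives in particular $S_K^{-1}\le (\eta_\gamma c_2)^2/M_2$. Together with $K\ge 2M_{\bar t}$, this shows that \eqref{eq-K-cond} holds. Theorem~\ref{th-complexity-S-mu} then supplies an index $\hat k\in[\lceil K/2\rceil - M_{\bar t},K]$ for which \eqref{eq-compl-x-v}--\eqref{eq-compl-slack} hold. We write $\bar\mu\coloneqq\mu_{\lceil K/2\rceil - M_{\bar t}}$, so that $\bar\mu\le M_4\epsilon_2$ by \eqref{eq-muK-cond}.

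In Definition~\ref{def_AKKT} I will take $x\coloneqq x^{\hat k+1}\in\mathcal{C}$, $z\coloneqq x^{\hat k}\in\mathcal{C}$ (Lemma~\ref{lm-well-def}(i)), $u\coloneqq u^{\hat k+1}$ and $\lambda\coloneqq\lambda_{\hat k+1}\ge0$.

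\textbf{Stationarity.} By the definition of $\delta_{\hat k+1}$ in \eqref{eq-delta-upbd-omega}, where $\partial P_2$ is evaluated at $x^{\hat k}=z$, the distance in the first KKT condition equals $\delta_{\hat k+1}$. By \eqref{eq-compl-delta} and the second part of \eqref{eq-SK-cond},
\[
\delta_{\hat k+1}\le \eta_\gamma^{-1}M_1\sqrt{M_2}\,S_K^{-1/2}\le \epsilon_1 .
\]

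\textbf{Feasibility, $\epsilon$-subgradient and complementarity.}
\begin{itemize}
\item Feasibility: $g(x)\le g^+(x)\le(3\alpha_3^\varphi+[\alpha_{3,1}^h]_+)\bar\mu\le\epsilon_2$, by \eqref{eq-compl-slack} and the fourth entry of $M_4$.
\item Subgradient: $u\in\partial_{\alpha_3^h\bar\mu}h(c(x))\subseteq\partial_{\epsilon_2}h(c(x))$, since $\alpha_3^h\bar\mu\le\epsilon_2$ by the first entry of $M_4$.
\item Complementarity: $|\lambda g(x)|\le M_3\bar\mu\le\epsilon_2$, by the second entry of $M_4$.
\end{itemize}

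\textbf{Proximity of $z$ to $x$.} This is the step needing the most care, because it requires the product $\epsilon_1\epsilon_2$. By \eqref{eq-compl-delta},
\[
\|x-z\|\le\sqrt{M_2}\,\bar\mu\, S_K^{-1/2}.
\]
The second part of \eqref{eq-SK-cond} gives $\sqrt{M_2}\,S_K^{-1/2}\le \eta_\gamma M_1^{-1}\epsilon_1$. The third entry of $M_4$ gives $\bar\mu\le (M_1/\eta_\gamma)\epsilon_2$. Multiplying these two bounds, the factors $M_1$ and $\eta_\gamma$ cancel and we obtain $\|x-z\|\le\epsilon_1\epsilon_2$.

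All conditions of Definition~\ref{def_AKKT} are therefore met, so $x^{\hat k+1}$ is an $(\epsilon_1,\epsilon_2)$-KKT point of \eqref{opt-dc-hc}. The only real work is this bookkeeping of which entry in the minima of \eqref{eq-SK-cond} and \eqref{eq-muK-cond} handles which condition; no new estimates are required beyond Theorem~\ref{th-complexity-S-mu}.
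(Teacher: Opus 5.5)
Your proposal is correct and follows the paper's proof exactly. Like the paper, you check that \eqref{eq-SK-cond} implies \eqref{eq-K-cond}, invoke Theorem~\ref{th-complexity-S-mu}, and match each entry of the minima in \eqref{eq-SK-cond} and \eqref{eq-muK-cond} to one KKT condition, using $z = x^{\hat k}$ and the cancellation of $M_1/\eta_\gamma$ in the proximity bound $\|x-z\|\le\epsilon_1\epsilon_2$.
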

\begin{proof}
Note that \eqref{eq-SK-cond} implies the second relation in \eqref{eq-K-cond}. Hence, we can invoke
	Theorem~\ref{th-complexity-S-mu} to conclude that
	there exists an integer $\hat{k} \in [\lceil K/2 \rceil - M_{\bar t}, K]$ such that
	\eqref{eq-compl-x-v}, \eqref{eq-compl-delta} and \eqref{eq-compl-slack} hold.
	Applying this result together with \eqref{eq-SK-cond}  and \eqref{eq-muK-cond},
	one readily sees that
    	\begin{align*}
		 & \|x^{\hat{k}+1} - x^{\hat{k}}\| \le \sqrt{ M_2}\mu_{\lceil K/2\rceil - M_{\bar t}} S_{K}^{-\frac{1}{2}}
        \le \eta_{\gamma}M_1^{-1} \epsilon_1 \mu_{\lceil K/2\rceil - M_{\bar t}} \le \epsilon_1 \epsilon_2,
		\\	& \delta_{\hat{k}+1} \le \eta_{\gamma}^{-1}M_1 \sqrt{ M_2}S_{K}^{-\frac{1}{2}} \le \epsilon_1,\quad
        u^{\hat{k}+1} \in
		\partial_{\epsilon_2} h(c(x^{\hat{k}+1})),
		\\ &
		g^{+}(x^{\hat{k} + 1})
		\le (3 \alpha_{3}^{\varphi} + [\alpha_{3, 1}^{h}]_{+}) \mu_{\lceil K/2\rceil - M_{\bar t}}  \le \epsilon_2,
		\\ & |\lambda_{\hat{k} +1}g(x^{\hat{k} + 1})|
		\le M_3 \mu_{\lceil K/2\rceil - M_{\bar t}} \le \epsilon_2.
	\end{align*}
	This completes the proof.
\end{proof}

\begin{remark}[Complexity under an explicit choice of $\{\mu_k\}$]
	Consider \eqref{opt-dc-hc}.
	Suppose that Assumptions~\ref{ass-gen}, \ref{ass-cq} and~\ref{ass-mu}(ii) hold.
	Let $\mu_0 > 0$ and $\mu_k = \mu_0 (1+k)^{-r}$
	for all $k\in \mathbb{N}$ with some constant $r\in (0, 1)$.
	Let $\{x^{k}\}$ be generated by Algorithm~\ref{alg-sl-sm-esqm}.
	Let $M_1$, $\eta_{\gamma}$, $M_{\bar t}$, $M_2$, $M_4$ be given in \eqref{eq-omega},
    \eqref{eq-gamma-pos-low-bd}, \eqref{eq-I-bd}, \eqref{definition_M2} and \eqref{eq-muK-cond}, respectively.
	Let $\epsilon_1 \in (0, 1)$, $\epsilon_2\in(0, 1)$ and
	$K\in \mathbb{N}_0$ satisfy
	\begin{align*}
		& K\ge \max\left\{
		\left(\frac{\mu_{0}}{2^{2r+1}}\right)^{-\frac{1}{1-r}}
		\min\left\{ \frac{\eta_{\gamma}^2 \epsilon^2_{1}}{M_1^2 M_2}, \frac{\eta_{\gamma}^2 c_2^{2}}{M_2}
		\right\}^{-\frac{1}{1-r}},\, 2M_{\bar t}+
		2 M_4^{-\frac{1}{r}}
		\left(\frac{\epsilon_2}{\mu_0}\right)^{-\frac{1}{r}} \right\}.
	\end{align*}
    It follows directly from \cite[Proposition~3.5]{25XPS} and Corollary~\ref{corol-compl-nconv} that there exists an integer $\hat{k} \in [\lceil K/2 \rceil - M_{\bar t}, K]$ such that
	$x^{\hat{k}+1}$ is an $(\epsilon_1, \epsilon_2)$-KKT point of \eqref{opt-dc-hc}.
    In other words, within $O(\max\{\epsilon_1^{-\frac{2}{1-r}}, \epsilon_2^{-\frac{1}{r}}\})$ iterations,
	Algorithm~\ref{alg-sl-sm-esqm} obtains an $(\epsilon_1, \epsilon_2)$-KKT point of \eqref{opt-dc-hc}.

    In particular, by setting $r=\frac{1}{3}$,
		Algorithm~\ref{alg-sl-sm-esqm} gets
		an $\epsilon_1$-KKT point of \eqref{opt-dc-hc}
		within $O(\epsilon^{-3}_1)$ iterations.
		This result matches the complexity of variable smoothing methods for unconstrained weakly convex composite minimization problems in \cite{21BW}.
		We refer the reader to \cite{11CGT,14CGT,22LMX,23KMM} and the references therein for related work on the complexity analysis of first-order methods that rely on subproblem solvers for constrained minimization problems.
\end{remark}

\section{Convex setting}\label{sec4}
In this section, we make the following assumption,
under which problem \eqref{opt-dc-hc} reduces to a convex optimization problem, thanks to Lemma~\ref{lm-g-mu}(iv).
\begin{assumption}\label{ass-conv}
	In \eqref{opt-dc-hc}, $f$ is convex, $P_2 = 0$, and $c$ is $(- \mathrm{hzn}\, h)$-convex.
\end{assumption}
Our aim is to establish the convergence of the whole sequence generated by Algorithm~\ref{alg-sl-sm-esqm} and study its local convergence rate under additional assumptions. Since we are interested in the asymptotic behavior, it suffices to study the behavior of the sequences generated by Algorithm~\ref{alg-sl-sm-esqm} for all {\em sufficiently large} $k$. Specifically, we consider the {\em last} iteration at which $\{t_k\}$ is increased, i.e.,
\begin{equation}\label{bark}
    \bar k\coloneqq \min \{ k \in \mathbb{N}_0 : t_j = t_k, \forall j\ge k\} - 1,
\end{equation}
and look at $x^k$ for all $k > \bar k$.
Observe from Theorem~\ref{th-gamma-bd} that $\bar{k} < \infty$ under Assumptions~\ref{ass-gen},
	\ref{ass-cq} and
	\ref{ass-mu}. 
    Note that unlike the $M_{\bar t}$ in \eqref{eq-I-bd} that arises in \eqref{eq-K-cond} as a lower bound for $K/2$, the $\bar k$ in \eqref{bark} cannot be computed without running the algorithm in general. Fortunately, we do not need the precise value of $\bar k$ here as we are focusing on the asymptotic behavior of our algorithm.

\begin{lemma}\label{lm-kbar-conv}
	Consider \eqref{opt-dc-hc}.
	Suppose that Assumptions~\ref{ass-gen},
	\ref{ass-cq},
	\ref{ass-mu} and \ref{ass-conv} hold.
	Let $\psi_{*}$ and $\Omega^*$ be the optimal value and the solution set of \eqref{opt-dc-hc}, respectively.
	Let $\{x^{k}\}$ and $\{\gamma_k\}$
	be generated by Algorithm~\ref{alg-sl-sm-esqm}.
    Let $\alpha_{3,1}$, $\alpha_{3,2}$, $\alpha_3$ and $\alpha_4$ be given in \eqref{eq-def-alpha} and \eqref{eq-def-alpha4}.
    Let $L_{\mu}$ and $M_{L}$
	be given in \eqref{eq-L-mu} and \eqref{eq-ML}, respectively.
	Let  $\bar{\gamma} \coloneqq \gamma_{\bar{k}+1}$, where $\bar k$ is defined in \eqref{bark}.
	Then the following statements hold.
\begin{enumerate}[{\rm (i)}]
    \item There exists an accumulation point of $\{x^{k}\}$ that solves both \eqref{opt-dc-hc}
    and the following minimization problem:
    \begin{equation}\label{opt-pen}
        \min_{x\in \mathbb{X}}\ \ \  \psi(x) + \bar{\gamma}^{-1} g^{+}(x).
    \end{equation}

	\item For every integer $k> \bar{k}$,
	\begin{equation}\label{eq-des-P-2}
		{\widetilde P}_{k+1}
		\le {\widetilde P}_{k}
		-  \frac{{c_1}}{2 \bar{\gamma} \mu_{k}}\|x^{k+1} - x^{k}\|^2
	\end{equation}
	where
	\begin{equation}\label{eq-P-def}
		{\widetilde P}_{k} \coloneqq \psi(x^{k})
		+ \bar{\gamma}^{-1} ( g_{\mu_{k}}(x^{k}) + \alpha_{4}  \mu_{k} ).
	\end{equation}
	Moreover, $\lim_{k\to\infty} {\widetilde P}_{k} = \psi_{*}$.
	
	\item For every integer $k> \bar{k}$ and every $x^*\in \Omega^*$,
	\begin{align} \label{eq-key-conv-1}
		&  C_0 \mu_{k} {\widehat Q}_{k+1}
        \le  C_1 \mu_{k}^2 +   C_2 (\mu_k {\widehat Q}_{k} - \mu_{k+1}{\widehat Q}_{k+1})
		+  \| x^{k} - x^*\|^2 - \|  x^{k+1} - x^*\|^2,
	\end{align}
	where
	\begin{align}\label{eq-def-psi-k}
        & {\widehat Q}_{k} \coloneqq \psi(x^{k})
		+ \bar{\gamma}^{-1}
		g^{+}(x^{k}) - \psi_{*} \ge 0,
		\\&
		C_0 = \frac{2\bar{\gamma}}{M_{L}}, \quad
        C_1 =  \frac{2\alpha_{3} c_1 + 2 a_1 ( [\alpha_{3,1}]_{+} + \alpha_{3,2} + \alpha_{4})}{c_1 \underline{L}},
         \quad
		C_2 = \frac{2  \bar{\gamma} a_1}{c_1 \underline{L}} \label{eq-def-C012}
	\end{align}
    with $a_1 \coloneqq [\mu_0 \widehat{\gamma}_{0} L_f + \mu_0 L_{\mu_{0}} - \underline{L}]_{+}$.	
	\item For every integer $k > \bar{k}$ and every $x^*\in \Omega^*$,	
	\begin{align}
		& \mu_{k}  ( {\widetilde P}_{k+1} - \psi_{*}
		+ \bar{\gamma}^{-1} \omega_{k+1}^2) \notag
		\\ & \le C_3 \mu_k^2
		+  C_4  ({\widetilde P}_{k} - {\widetilde P}_{k+1})
		+ \frac{M_{L}}{2 \bar{\gamma}} (\| x^{k} - x^*\|^2 - \|  x^{k+1} - x^*\|^2),
	\label{eq-key-conv-2}
	\end{align}
	where ${\widetilde P}_k$ is defined in \eqref{eq-P-def}, $\omega_{k+1}$ is given in \eqref{eq-omega}, and
	\begin{equation}\label{eq-C-3-4}
		C_3 =  \frac{([\alpha_{3,2}]_{+} + \alpha_4) M_L}{\eta_{\gamma} \underline{L}}, \quad
		C_4 = \frac{2 \underline{L} + \mu_{0} a_1 M_L }{c_1 \underline{L}},
	\end{equation}
    with $a_1$ defined in item {\rm (iii)}.
\end{enumerate}
\end{lemma}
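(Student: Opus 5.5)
The plan is to work entirely on the tail $k>\bar k$. There $\gamma_k\equiv\bar\gamma\ge\eta_\gamma$, so dividing \eqref{eq-des-Q} by $\bar\gamma$ gives the merit function ${\widetilde P}_k$ of \eqref{eq-P-def}, up to the additive constant $-\bar m$. Convexity then enters through Lemma~\ref{lm-g-mu}(iv), and the basic inequality \eqref{eq-des-xk1} is evaluated at $x=x^*\in\Omega^*$. I will prove the items in the order (ii), (iii), (iv), (i), because (i) follows from the summability estimates in (iii)--(iv).

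\emph{Item (ii), descent part.} For $k>\bar k$ we have $\gamma_k=\gamma_{k+1}=\bar\gamma$. Hence $Q_k=\bar\gamma({\widetilde P}_k-\bar m)$, and \eqref{eq-des-Q} divided by $\bar\gamma$ is exactly \eqref{eq-des-P-2}. The limit statement is postponed until after (iii).

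\emph{Item (iii).} Take $x=x^*$ in \eqref{eq-des-xk1} with $P_2=0$.
\begin{enumerate}[(a)]
\item Convexity of $f$ gives $f(x^k)+\langle\nabla f(x^k),x^*-x^k\rangle\le f(x^*)$.
\item Convexity of $g_{\mu_k}$ (Lemma~\ref{lm-g-mu}(iv)) gives $\ell_k(x^*)\le g_{\mu_k}(x^*)\le g^+(x^*)+\alpha_{3,2}\mu_k=\alpha_{3,2}\mu_k$, since $x^*$ is feasible.
\item On the left-hand side, \eqref{eq-gmu-appr} gives $g_{\mu_k}(x^{k+1})\ge g^+(x^{k+1})-\alpha_{3,1}\mu_k$.
\end{enumerate}
Combining these yields
\[
\bar\gamma{\widehat Q}_{k+1}\le \alpha_3\mu_k+\tfrac{L_k}{2\mu_k}\big(\|x^k-x^*\|^2-\|x^{k+1}-x^*\|^2\big)+\tfrac{a_1}{2\mu_k}\|x^{k+1}-x^k\|^2 .
\]
Here I used $\mu_k\gamma_kL_f+\mu_kL_{\mu_k}-L_k\le a_1$. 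Multiply by $2\mu_k/L_k$ and use $\underline L\le L_k\le M_L$; this produces the constants $C_0=2\bar\gamma/M_L$ and $C_1\supseteq 2\alpha_3/\underline L$.

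The remaining term $\frac{a_1}{L_k}\|x^{k+1}-x^k\|^2$ is bounded through \eqref{eq-des-P-2}: $\|x^{k+1}-x^k\|^2\le \frac{2\bar\gamma\mu_k}{c_1}({\widetilde P}_k-{\widetilde P}_{k+1})$. By \eqref{eq-gmu-appr}, ${\widetilde P}_k$ and ${\widehat Q}_k+\psi_*$ differ by at most $\bar\gamma^{-1}([\alpha_{3,1}]_++\alpha_{3,2}+\alpha_4)\mu_k$. This converts $\mu_k({\widetilde P}_k-{\widetilde P}_{k+1})$ into $\mu_k{\widehat Q}_k-\mu_{k+1}{\widehat Q}_{k+1}$, up to errors of order $\mu_k^2$; I also use $\mu_{k+1}\le\mu_k$ and ${\widehat Q}_{k+1}\ge0$ to replace $\mu_k{\widehat Q}_{k+1}$ by $\mu_{k+1}{\widehat Q}_{k+1}$ in the right direction. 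These errors generate the second part of $C_1$ and the constant $C_2$. Nonnegativity of ${\widehat Q}_k$ holds because every $x\in\mathcal C$ satisfies $\psi(x)+\bar\gamma^{-1}g^+(x)\ge\psi_*$. I expect this to be the main obstacle. It follows from exact penalization, since the stationarity condition of Remark~\ref{rm-stationary-pen} at a feasible accumulation point gives a multiplier $\le\bar\gamma^{-1}$. To avoid circularity, I will instead establish (i) first, directly from Theorem~\ref{th-complexity-S-mu}: a limit point $\bar x$ is KKT for the convex penalty problem \eqref{opt-pen}, hence solves it, and it is feasible, hence $\psi(\bar x)=\psi_*$ and the penalty optimal value equals $\psi_*$.

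\emph{Item (iv).} This follows the same route, but keeps ${\widetilde P}_{k+1}$ on the left. Use $g_{\mu_k}(x^{k+1})+\alpha_4\mu_k\ge g_{\mu_{k+1}}(x^{k+1})+\alpha_4\mu_{k+1}$ from \eqref{eq-gmu-Lips}. Also keep the $-L_k\|x^{k+1}-x^k\|^2/(2\mu_k)$ part split, which produces $\bar\gamma^{-1}\mu_k\omega_{k+1}^2$; absorb the rest via \eqref{eq-des-P-2} to get $C_4$, and bound $\ell_k(x^*)$ by $[\alpha_{3,2}]_+\mu_k$ to get $C_3$.

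\emph{Completing (ii).} For $\lim{\widetilde P}_k=\psi_*$, sum \eqref{eq-key-conv-2}: $\sum\mu_k({\widetilde P}_{k+1}-\psi_*)<\infty$ only if $\mu_k$ tends to zero slowly. Instead, I note that ${\widetilde P}_k$ is monotone in the tail. Along the subsequence from (i), ${\widetilde P}_{k}\to\psi(\bar x)+\bar\gamma^{-1}g^+(\bar x)=\psi_*$, using $\mu_k\to0$. Since ${\widetilde P}_k\ge\psi_*-O(\mu_k)$, the whole sequence converges to $\psi_*$.
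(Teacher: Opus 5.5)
\paragraph{Overall structure.} Your architecture matches the paper's. Item (i) comes from the stationarity in Remark~\ref{rm-stationary-pen} plus feasibility and convexity. This is the order you settle on; the initially announced derivation of (i) from (iii)--(iv) would be circular, as you noticed, since (iii) needs ${\widehat Q}_k\ge 0$. The descent in (ii) comes from freezing $\gamma_k=\gamma_{k+1}=\bar\gamma$ in \eqref{eq-des-Q}, and the limit from monotonicity along the subsequence given by (i). Item (iii) comes from \eqref{eq-des-xk1} at $x=x^*$ with the convexity bounds on $f$ and $\ell_k$. Your bookkeeping in (iii) reproduces $C_0$, $C_1$, $C_2$ exactly.

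\paragraph{The gap: your mechanism for (iv) fails.} To get the coefficient $\frac{M_L}{2\bar\gamma}$ on $\|x^k-x^*\|^2-\|x^{k+1}-x^*\|^2$, you must divide \eqref{eq-des-xk1} by $\bar\gamma$ and multiply by $\mu_k M_L/L_k$. The split-off term $\frac{L_k}{2\mu_k}\|x^{k+1}-x^k\|^2$ then becomes $\frac{M_L}{2\bar\gamma}\|x^{k+1}-x^k\|^2=\frac{M_L\mu_k}{2}\cdot\bar\gamma^{-1}\mu_k\omega_{k+1}^2$. This falls short of the required $\bar\gamma^{-1}\mu_k\omega_{k+1}^2$ by the factor $M_L\mu_k/2\to 0$. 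After this scaling, no term coming from \eqref{eq-des-xk1} carries the extra factor $1/\mu_k$ present in $\mu_k\omega_{k+1}^2=\|x^{k+1}-x^k\|^2/\mu_k$.

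There is a second problem. Once $-L_k$ is split off, the leftover coefficient $\mu_k\bar\gamma L_f+\mu_k L_{\mu_k}$ is bounded only by $\mu_0\widehat\gamma_0L_f+\mu_0L_{\mu_0}$, not by $a_1$. Absorbing it through \eqref{eq-des-P-2} therefore gives the factor $\mu_0(\mu_0\widehat\gamma_0L_f+\mu_0L_{\mu_0})M_L/(c_1\underline L)$. This is in general larger than the $\mu_0a_1M_L/(c_1\underline L)$ appearing in the stated $C_4$.

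\paragraph{Repair (the paper's route).} Do not split.
\begin{itemize}
\item Bound $\mu_k\bar\gamma L_f+\mu_kL_{\mu_k}-L_k\le a_1$ exactly as in (iii). Absorb that term via \eqref{eq-des-P-2} into $\frac{\mu_0 a_1 M_L}{c_1\underline L}({\widetilde P}_k-{\widetilde P}_{k+1})$.
\item Obtain the $\omega$-term separately from \eqref{eq-des-P-2} itself: $\bar\gamma^{-1}\mu_k\omega_{k+1}^2=\bar\gamma^{-1}\|x^{k+1}-x^k\|^2/\mu_k\le\frac{2}{c_1}({\widetilde P}_k-{\widetilde P}_{k+1})$. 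This is exactly the $2\underline L/(c_1\underline L)$ part of $C_4$.
\end{itemize}

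\paragraph{The rescaling needs an exact lower bound.} Rescaling by $M_L/L_k\ge 1$ requires $\mu_k({\widetilde P}_{k+1}-\psi_*)\le \frac{M_L\mu_k}{L_k}({\widetilde P}_{k+1}-\psi_*)$, i.e., the exact inequality ${\widetilde P}_{k+1}\ge\psi_*$. The bound ${\widetilde P}_k\ge\psi_*-O(\mu_k)$ that you record would leave an $O(\mu_k^2)$ error not covered by $C_3$. The exact inequality follows from the monotone convergence in (ii). Alternatively, it follows directly from (i) and $g_\mu\ge g^+-\alpha_4\mu$, obtained by letting $\mu_1\downarrow 0$ in \eqref{eq-gmu-Lips}. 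So your ``completing (ii)'' step should come before (iv). It uses only (i) and the descent, so this creates no circularity.
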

\begin{proof}
    Using Theorem~\ref{th-gamma-bd}, we obtain that $\bar{k} < \infty$ is well-defined,
    and that $\gamma_{k} \equiv \bar{\gamma}$ for all $k> \bar{k}$.
    Item (i) follows directly from Remark~\ref{rm-stationary-pen}, together with Assumption~\ref{ass-conv}.

    We now turn to proving items (ii)-(iv).
    In what follows, let $k$ be an integer greater than $\bar{k}$.

	From \eqref{eq-gmu-Lips} and $\mu_{k+1}\le \mu_k$,
	it follows that
	\begin{align*}
		{\widetilde P}_{k+1}
		& = \psi(x^{k+1})  +  \bar{\gamma}^{-1}  (g_{\mu_{k+1}}(x^{k+1})
		+ \alpha_4 \mu_{k+1})
		\\ & \le \psi(x^{k+1})  +   \bar{\gamma}^{-1}
		(g_{\mu_{k}}(x^{k+1}) + \alpha_4 \mu_k)
		\\ & \le \psi(x^{k})
		+ \bar{\gamma}^{-1} ( g_{\mu_{k}}(x^{k}) + \alpha_{4}  \mu_{k} )
		 - \frac{{c_1}}{2 \bar{\gamma} \mu_{k}}\|x^{k+1} - x^{k}\|^2,
	\end{align*}
	where we use \eqref{eq-ls-desc} with $\widetilde{x} = x^{k+1}$ to get the last inequality.
    This implies \eqref{eq-des-P-2}.
	
	Furthermore, by Theorem~\ref{th-complexity-S-mu}
	and Assumption~\ref{ass-conv},
	there exists an infinite subsequence
	$\{x^{k_j}\}$ such that $\lim_{j\to \infty}x^{k_j} = \bar{x}$
	is a solution of \eqref{opt-dc-hc}.
	Since $\{{\widetilde P}_{k}\}$ is nonincreasing, we have
	\begin{equation*}
		\lim_{k\to \infty} 	{\widetilde P}_{k}
		= \lim_{j\to \infty}  (\psi(x^{k_j})
		+  \bar{\gamma}^{-1} ( g_{\mu_{k_j}}(x^{k_j}) + \alpha_{4}  \mu_{k_j} ))
		= \psi_{*},
	\end{equation*}
	where the last equality holds because $\lim_{j\to \infty}g_{\mu_{k_j}}(x^{k_j})
	= g^{+}(\bar{x}) = 0$ (see \eqref{eq-gmu-appr}).
    This shows item (ii).

    Before proceeding to prove items (iii) and (iv), we first derive the auxiliary inequality in \eqref{eq-gammak-fk1} below.
    To this end, note from the convexity of $g_{\mu_k}$ (see Lemma~\ref{lm-g-mu}(iv)) that,
    for every $x\in \mathcal{F}$,
		\begin{align*}
			\ell_{k}(x)
			& = g_{\mu_{k}}(x^{k}) +  \langle \nabla g_{\mu_{k}}(x^{k}), x - x^{k} \rangle
			 \le g_{\mu_{k}}(x)
			\le g^{+}(x)  + \alpha_{3,2} \mu_k = \alpha_{3,2} \mu_k,
		\end{align*}
	where the second inequality holds thanks to
	\eqref{eq-gmu-appr}, and the last equality follows from the fact that $x\in \mathcal{F}$.
	Moreover,
	using the convexity of $f$, we have that
	\begin{equation*}
		f(x^{k}) + \langle \nabla f(x^{k}), x - x^{k} \rangle
		\le f(x) \quad \forall x\in \mathcal{F}.
	\end{equation*}
	The last two displays, together with \eqref{eq-des-xk1}
	and $P_2 \equiv 0$, yield that, for every $x\in \mathcal{F}$,
    		\begin{align}
			& \psi(x^{k+1})
			+  \bar{\gamma}^{-1}g_{\mu_k}(x^{k+1})
			-  \psi(x) - \alpha_{3, 2} \bar{\gamma}^{-1}  \mu_{k}
            - \frac{L_k}{2\mu_{k}\bar{\gamma}} ( \| x^{k} - x\|^2 - \|  x^{k+1} - x\|^2)
            \notag
			\\ & \le \frac{\mu_k \bar{\gamma} L_f + \mu_k L_{\mu_{k}} - L_{k}}{2\mu_{k}\bar{\gamma}} \| x^{k+1} - x^{k}\|^2
			 \le   \frac{a_1}{2 \mu_k \bar{\gamma}}
			\| x^{k+1} - x^{k}\|^2  \le \frac{a_1}{c_1} ({\widetilde P}_{k} - {\widetilde P}_{k+1}),
            \label{eq-gammak-fk1}
		\end{align}
	where $a_1 = [\mu_0 \widehat{\gamma}_{0} L_f + \mu_0 L_{\mu_{0}} - \underline{L}]_{+}$,
	the second inequality holds thanks to
	$\mu_k \le \mu_{0}$, $\bar{\gamma} \le \widehat{\gamma}_{0}$,
    $\mu_k L_{\mu_k} \le \mu_{0} L_{\mu_{0}}$ (see \eqref{eq-L-mu}) and $L_k\ge \underline{L}$ (see \eqref{eq-ML}),
	and the last inequality follows from \eqref{eq-des-P-2}.

	Now, we are ready to prove item (iii).
    According to item (i), $\psi_{*}$ is also the optimal value of \eqref{opt-pen},
    which implies the nonnegativity claim in \eqref{eq-def-psi-k}.
    Combining this with $L_k\le M_L$ (see \eqref{eq-ML}), we have, for every $x^*\in \Omega^*$,
	\begin{align}
		& \frac{2\mu_{k}\bar{\gamma}}{M_{L}} {\widehat Q}_{k+1} \le
        \frac{2\mu_{k}\bar{\gamma}}{L_k} \left(\psi(x^{k+1})
		+  \bar{\gamma}^{-1}
		g^{+}(x^{k+1})
		-  \psi(x^*) \right) \notag
		\\ &  \le \frac{2\mu_{k}\bar{\gamma}}{L_k}
		\left(\psi(x^{k+1})
		+  \bar{\gamma}^{-1}
		(g_{\mu_{k}}(x^{k+1}) + \alpha_{3,1} \mu_{k})
		-  \psi(x^*) \right) \notag
		\\ & \le  \frac{2(\alpha_{3,1}  + \alpha_{3, 2})}{L_k}
		\mu_{k}^2 +
		\frac{2 a_1 \bar{\gamma}}{c_1 L_k} \mu_{k}({\widetilde P}_{k} - {\widetilde P}_{k+1})
		+  \| x^{k} - x^*\|^2 - \|  x^{k+1} - x^*\|^2 \notag
        \\  & \le  \frac{2\alpha_{3}}{\underline{L}}
		\mu_{k}^2 +
		\frac{2 a_1 \bar{\gamma}}{c_1 \underline{L}} \mu_{k}({\widetilde P}_{k} - {\widetilde P}_{k+1})
		+  \| x^{k} - x^*\|^2 - \|  x^{k+1} - x^*\|^2 \notag
        \\ & \le \left( \frac{2\alpha_{3}}{\underline{L}}
        + \frac{2 a_1 ( \alpha_{3,2} + [\alpha_{3,1}]_{+} + \alpha_{4})}{c_1 \underline{L}}
        \right)
		\mu_{k}^2
       + \frac{2 a_1 \bar{\gamma}}{c_1 \underline{L}} \mu_{k}({\widehat Q}_{k} - {\widehat Q}_{k+1}) \notag
		\\ & \qquad +  \| x^{k} - x^*\|^2 - \|  x^{k+1} - x^*\|^2, \label{Add_number}
	\end{align}
	where the second inequality follows from \eqref{eq-gmu-appr},
	the third inequality follows from \eqref{eq-gammak-fk1} upon setting $x=x^{*}$,
    and the fourth inequality holds because $\alpha_{3,1}  + \alpha_{3, 2} = \alpha_3 > 0$, $L_k\ge \underline{L}$ (see \eqref{eq-ML}) and
    ${\widetilde P}_k - {\widetilde P}_{k+1}\ge 0$ (see \eqref{eq-des-P-2}), and the last inequality follows from the facts that
    \begin{align*}
        {\widetilde P}_{k} & = \psi(x^{k})
		+ \bar{\gamma}^{-1} ( g_{\mu_{k}}(x^{k}) + \alpha_{4}  \mu_{k})
         \overset{\rm (a)}\le \psi(x^{k})
		+ \bar{\gamma}^{-1} g^{+}(x^{k})
        + \bar{\gamma}^{-1} (\alpha_{3,2} + \alpha_{4})  \mu_{k}
        \\ &  = {\widehat{Q}_{k}} + \psi_{*} + \bar{\gamma}^{-1} (\alpha_{3,2} + \alpha_{4})  \mu_{k},
        \\ {\widetilde P}_{k+1} & = \psi(x^{k+1})
		+ \bar{\gamma}^{-1} ( g_{\mu_{k+1}}(x^{k+1}) + \alpha_{4}  \mu_{k+1})
         \overset{\rm (a)}\ge \psi(x^{k+1})
		+ \bar{\gamma}^{-1} ( g^{+}(x^{k+1})
        -   \alpha_{3,1}\mu_{k+1} )
        \\ & \ge \psi(x^{k+1})
		+ \bar{\gamma}^{-1} ( g^{+}(x^{k+1})
        -   [\alpha_{3,1}]_{+}\mu_{k+1} )
        \overset{\rm (b)}\ge {\widehat{Q}_{k+1}} + \psi_{*} - \bar{\gamma}^{-1} [\alpha_{3,1}]_{+}\mu_{k};
    \end{align*}
    here, we have used \eqref{eq-gmu-appr} in (a) and the fact $\mu_{k+1} \le \mu_{k}$ in (b).
    Combining \eqref{Add_number} with the fact that $\mu_{k+1} \le \mu_{k}$
    and ${\widehat Q}_{k+1} \ge 0$, we conclude that \eqref{eq-key-conv-1} holds.
	
	Finally, we prove item (iv).
    Using item (ii), we have ${\widetilde P}_{k+1} - \psi_{*}\ge 0$,
    which, together with \eqref{eq-ML} and the definition of ${\widetilde P}_{k+1}$ in \eqref{eq-P-def}, implies that, for every $x^*\in \Omega^*$,
	\begin{align*}
		& \mu_{k}( {\widetilde P}_{k+1} - \psi_{*})
		\le \frac{M_L \mu_{k}}{L_k}
		\left(\psi(x^{k+1})  +  \bar{\gamma}^{-1}  (g_{\mu_{k+1}}(x^{k+1})
		+ \alpha_4 \mu_{k+1}) - \psi(x^*)\right) \notag
		\\ & \le \frac{M_L \mu_{k}}{L_k}
		\left( \psi(x^{k+1})  +   \bar{\gamma}^{-1}
		(g_{\mu_{k}}(x^{k+1}) + \alpha_4 \mu_k)
		  - \psi(x^*)\right)   \notag
\\ & \le  \frac{(\alpha_{3,2}  + \alpha_4) M_L}{\bar{\gamma} L_k} \mu_k^2
+  \frac{a_1 M_L \mu_{k}}{c_1 L_k}  ({\widetilde P}_{k} - {\widetilde P}_{k+1})
+ \frac{M_L}{2 \bar{\gamma}} (\| x^{k} - x^*\|^2 - \|  x^{k+1} - x^*\|^2)
\notag
\\ & \le  \frac{([\alpha_{3,2}]_{+} + \alpha_4) M_L}{\eta_{\gamma} \underline{L}} \mu_k^2
\!+\!  \frac{ \mu_{0} a_1 M_L}{c_1 \underline{L}}  ({\widetilde P}_{k} - {\widetilde P}_{k+1}) \!
+\! \frac{M_L}{2 \bar{\gamma}} (\| x^{k} - x^*\|^2 \!-\! \|  x^{k+1} - x^*\|^2),
	\end{align*}
	the second inequality follows from \eqref{eq-gmu-Lips} and the fact that $\mu_{k+1}\le \mu_k$,
	the third inequality holds because of \eqref{eq-gammak-fk1} (with $x = x^*$),
	and the last inequality holds thanks to \eqref{eq-ML}, \eqref{eq-gamma-pos-low-bd},
	$\mu_{k}\le \mu_{0}$ and ${\widetilde P}_{k} - {\widetilde P}_{k+1}\ge 0$ (see \eqref{eq-des-P-2}).
	
	Furthermore, using \eqref{eq-des-P-2} and the definition of $\omega_{k+1}$ in \eqref{eq-omega},
	we obtain
	$$
	\bar{\gamma}^{-1} \mu_k \omega_{k+1}^2 \le \tfrac{2}{c_1} ({\widetilde P}_{k} - {\widetilde P}_{k+1}).
	$$
	Summing the last two displays, we conclude that \eqref{eq-key-conv-2} holds.
\end{proof}

We are now ready to establish asymptotic convergence and the (asymptotic) convergence rate in the next theorem.
\begin{theorem}[Asymptotic convergence rate in convex setting]\label{th-compl-conv}
	Consider \eqref{opt-dc-hc}.
	Suppose that Assumptions~\ref{ass-gen},
	\ref{ass-cq}, \ref{ass-mu} and \ref{ass-conv} hold.
	Let $\psi_{*}$ be the optimal value of \eqref{opt-dc-hc}.
	Let $\{x^{k}\}$ be generated by Algorithm~\ref{alg-sl-sm-esqm}.
	Let $M_L$ and $\eta_{\gamma}$ be given in \eqref{eq-ML} and \eqref{eq-gamma-pos-low-bd},
	respectively.
	Let $C_3$ and $C_4$ be given in \eqref{eq-C-3-4}, $\alpha_{3,1}$, $\alpha_{3,2}$ and $\alpha_4$ be given in \eqref{eq-def-alpha} and \eqref{eq-def-alpha4}, and $\bar{k}$ be defined in \eqref{bark}.
	Let $\epsilon>0$ and integer $K \ge 2(\bar{k}+1)$ satisfy the following conditions
	\begin{align}
		S_{K}^{-1} & \le  \frac{ \min \left\{ \eta^2_{\gamma} c_2^2,  \eta_{\gamma} \epsilon /2 \right\} }{2C_4\eta_\gamma B_1 + M_{L}m_{\mathcal{C}}^2} , \label{eq-cond-conv-SK}
		\\ \mu_{\lceil K/2\rceil} & \le \min \left\{ \frac{\eta_{\gamma} c_2^2}{2C_3},\
        \frac{\eta_{\gamma}\epsilon}{2 + 2[\alpha_{3,1}]_{+}},\
		\frac{\epsilon}{4 C_3}, \
		\frac{ \epsilon}{3 \alpha_{3}^{\varphi} + [\alpha_{3, 1}^{h}]_{+}}, \
		\frac{ \eta_{\gamma} \epsilon}{\alpha_{4} + 2 \alpha_{3}^{\varphi} + [\alpha_{3,2}^{\varphi}]_{+}}
		\right\}, \label{eq-cond-conv-muK/2}
	\end{align}
	where
	\begin{align*}
			& B_1 \coloneqq m_{\psi} + \eta_{\gamma}^{-1}
		(m_{g} +  ([\alpha_{3,2}]_{+} + \alpha_{4})  \mu_{0} ) -\psi_*,
		\\ & m_{\mathcal{C}} \coloneqq \sup_{x, z\in \mathcal{C}} \| x - z\|,\
		m_{\psi} \coloneqq \sup_{x\in \mathcal{C}} \psi(x), \
		m_{g} \coloneqq \sup_{x\in \mathcal{C}} g^{+}(x) \ge 0.
	\end{align*}	
	Then there exists $\widehat{k} \in [\lceil K/2 \rceil, K]$ such that
    \begin{equation}\label{eq-conv-eps-solution}
        | \psi(x^{\widehat{k}+1}) - \psi_{*} | \le \epsilon
        \quad \mbox{and} \quad
        g(x^{\widehat{k}+1}) \le \epsilon.
    \end{equation}
	If we assume, in addition, that $\sum_{k=0}^{\infty} \mu_k^2 < \infty$,
	then $\{x^{k}\}$ converges to a solution of both \eqref{opt-dc-hc} and \eqref{opt-pen}.
\end{theorem}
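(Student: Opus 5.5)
The plan is to sum the key inequality \eqref{eq-key-conv-2} of Lemma~\ref{lm-kbar-conv}(iv) over the window $k=\lceil K/2\rceil,\dots,K$. Since $K\ge 2(\bar k+1)$, every index in the window exceeds $\bar k$, so $\gamma_k\equiv\bar\gamma\ge\eta_\gamma$ there. Telescoping the last two terms on the right-hand side gives
\[
\sum_{k=\lceil K/2\rceil}^{K}\mu_k\bigl(\widetilde P_{k+1}-\psi_*+\bar\gamma^{-1}\omega_{k+1}^2\bigr)
\le C_3\mu_{\lceil K/2\rceil}S_K+C_4(\widetilde P_{\lceil K/2\rceil}-\widetilde P_{K+1})+\tfrac{M_L}{2\bar\gamma}m_{\mathcal C}^2 ,
\]
where I used $\sum\mu_k^2\le\mu_{\lceil K/2\rceil}S_K$. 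For the middle term, note that $\widetilde P_{K+1}\ge\psi_*$, because $\{\widetilde P_k\}$ is nonincreasing with limit $\psi_*$ (item (ii)). Also $\widetilde P_k-\psi_*\le B_1$, which follows from \eqref{eq-gmu-appr}, $\bar\gamma\ge\eta_\gamma$ and $\mu_k\le\mu_0$. Dividing by $S_K$, I get some $\widehat k\in[\lceil K/2\rceil,K]$ with
\[
\widetilde P_{\widehat k+1}-\psi_*+\bar\gamma^{-1}\omega_{\widehat k+1}^2\le C_3\mu_{\lceil K/2\rceil}+\bigl(C_4B_1+M_Lm_{\mathcal C}^2/(2\eta_\gamma)\bigr)S_K^{-1}.
\]
Each summand on the right is nonnegative, so this single bound controls both $\widetilde P_{\widehat k+1}-\psi_*$ and $\omega_{\widehat k+1}^2$.

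Next I would use \eqref{eq-cond-conv-SK}--\eqref{eq-cond-conv-muK/2} to convert this into two conclusions. The first is $\omega_{\widehat k+1}^2\le\bar\gamma\,\eta_\gamma c_2^2\le\bar\gamma^2c_2^2$, that is, $\|x^{\widehat k+1}-x^{\widehat k}\|\le c_2\mu_{\widehat k}\gamma_{\widehat k}$. The second is $\widetilde P_{\widehat k+1}-\psi_*\le\epsilon/2$. Since $\widehat k>\bar k$, $t$ is not increased at iteration $\widehat k$, so \eqref{eq-gamma-cond} must fail there. The second condition in \eqref{eq-gamma-cond} holds by the first conclusion, hence the first condition fails: $h_{\mu_{\widehat k}}(c(x^{\widehat k+1}))\le 2\alpha_3^\varphi\mu_{\widehat k}$. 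Lemma~\ref{lm-properties}(iii) then gives $g^+(x^{\widehat k+1})\le(3\alpha_3^\varphi+[\alpha_{3,1}^h]_+)\mu_{\lceil K/2\rceil}\le\epsilon$.

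For the objective, the upper bound uses $\psi(x^{\widehat k+1})\le\widetilde P_{\widehat k+1}+\bar\gamma^{-1}[\alpha_{3,1}]_+\mu_{\widehat k+1}$, which follows from \eqref{eq-gmu-appr}. The $\mu$-condition with $2+2[\alpha_{3,1}]_+$ in the denominator absorbs the extra term. The lower bound comes from Lemma~\ref{lm-kbar-conv}(i): $\psi_*$ is the optimal value of \eqref{opt-pen}, so $\psi(x)-\psi_*\ge-\bar\gamma^{-1}g^+(x)$. Combined with the bound $g_{\mu}\le(2\alpha_3^\varphi+\alpha_{3,2}^\varphi)\mu$ from \eqref{eq-feas-upbd}, and with the last term in \eqref{eq-cond-conv-muK/2}, this gives $|\psi(x^{\widehat k+1})-\psi_*|\le\epsilon$. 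The main obstacle I expect is precisely this bookkeeping: matching each of the constants in \eqref{eq-cond-conv-SK}--\eqref{eq-cond-conv-muK/2} to the corresponding term, where I expect splitting into halves and quarters to suffice.

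For whole-sequence convergence under $\sum\mu_k^2<\infty$, I would rewrite \eqref{eq-key-conv-1} as
\[
a_{k+1}\le a_k+C_1\mu_k^2-C_0\mu_k\widehat Q_{k+1},\qquad a_k\coloneqq\|x^k-x^*\|^2+C_2\mu_k\widehat Q_k .
\]
By the standard quasi-Fej\'er lemma, $a_k$ converges for every $x^*\in\Omega^*$. Moreover $\widehat Q_k$ is bounded by compactness of $\mathcal C$ and $\mu_k\to0$, so $\|x^k-x^*\|$ converges for every $x^*\in\Omega^*$. To finish, I need one cluster point $\bar x\in\Omega^*$. I would obtain it by applying the first part with $\epsilon_j\downarrow0$ and $K_j\to\infty$: the resulting indices $\widehat k_j+1$ satisfy $\psi(x^{\widehat k_j+1})\to\psi_*$ and $g^+(x^{\widehat k_j+1})\to0$. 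Any cluster point $\bar x$ of these iterates is then feasible with $\psi(\bar x)=\psi_*$, so $\bar x\in\Omega^*$, and by Lemma~\ref{lm-kbar-conv}(i) it also solves \eqref{opt-pen}. Taking $x^*=\bar x$, the limit of $\|x^k-\bar x\|$ is zero along a subsequence, hence it is zero along the whole sequence, and $x^k\to\bar x$.
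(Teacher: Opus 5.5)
Your window summation of \eqref{eq-key-conv-2}, the bound $\widetilde P_k-\psi_*\le B_1$, the choice of $\widehat k$, the feasibility argument, and the upper bound on $\psi(x^{\widehat k+1})-\psi_*$ all coincide with the paper's proof. The feasibility argument is the one where $\omega_{\widehat k+1}\le c_2\gamma_{\widehat k}$ forces the first condition in \eqref{eq-gamma-cond} to fail, after which Lemma~\ref{lm-properties}(iii) applies.

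The gap is in the lower bound $\psi_*-\psi(x^{\widehat k+1})\le\epsilon$. Your route gives $\psi_*-\psi(x^{\widehat k+1})\le\bar\gamma^{-1}g^+(x^{\widehat k+1})$. To reach the constant $\alpha_4+2\alpha_3^\varphi+[\alpha_{3,2}^\varphi]_+$ in the last term of \eqref{eq-cond-conv-muK/2}, you implicitly need $g^+(x^{\widehat k+1})\le g_{\mu_{\widehat k}}(x^{\widehat k+1})+\alpha_4\mu_{\widehat k}$. None of the facts you cite gives this. The available link \eqref{eq-gmu-appr} only yields $g^+\le g_\mu+\alpha_{3,1}\mu$, which leads to $\eta_\gamma^{-1}(2\alpha_3^\varphi+\alpha_{3,2}^\varphi+\alpha_{3,1})\mu_{\lceil K/2\rceil}$. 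This equals the second bound in \eqref{eq-feas-upbd} divided by $\eta_\gamma$. The SA parameters are not required to satisfy $\alpha_{3,1}\le\alpha_4$, since they need not be tight. Consequently \eqref{eq-cond-conv-muK/2} does not force this quantity below $\epsilon$ in general. The fourth term only gives $\epsilon/\eta_\gamma$, and the second and fifth terms together only give $3\epsilon/2$. So the bookkeeping you deferred does not close along the route you describe.

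There are two easy repairs. The first is the paper's: use $\widetilde P_{\widehat k+1}\ge\psi_*$, which you already have, to get $\psi_*-\psi(x^{\widehat k+1})\le\bar\gamma^{-1}(g_{\mu_{\widehat k+1}}(x^{\widehat k+1})+\alpha_4\mu_{\widehat k+1})$. Then pass to $\mu_{\widehat k}$ with \eqref{eq-gmu-Lips} and apply the first bound in \eqref{eq-feas-upbd}. The second keeps your route: first prove $g^+(x)\le g_\mu(x)+\alpha_4\mu$ for all $x$ and $\mu>0$ by letting $\mu_1\downarrow 0$ in \eqref{eq-gmu-Lips}, noting $g_{\mu_1}(x)\to g^+(x)$ by \eqref{eq-gmu-appr}. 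Either way you arrive at $\eta_\gamma^{-1}(\alpha_4+2\alpha_3^\varphi+[\alpha_{3,2}^\varphi]_+)\mu_{\lceil K/2\rceil}\le\epsilon$.

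There is also a small slip: one bound controls both $\widetilde P_{\widehat k+1}-\psi_*$ and $\omega_{\widehat k+1}^2$ because the two summands on the \emph{left} are nonnegative, not those on the right.

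Your whole-sequence convergence argument is correct. You run the quasi-Fej\'er argument on \eqref{eq-key-conv-1} with $a_k=\|x^k-x^*\|^2+C_2\mu_k\widehat Q_k$. The paper instead uses \eqref{eq-key-conv-2}, whose extra term $\widetilde P_k-\widetilde P_{k+1}$ is summable, together with \cite[Proposition~1]{03Iusem}. You also do not need to rerun the first part with $\epsilon_j\downarrow 0$: Lemma~\ref{lm-kbar-conv}(i) already provides a cluster point solving both \eqref{opt-dc-hc} and \eqref{opt-pen}.
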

\begin{proof}
    Let $\Omega^*$ be the solution set of \eqref{opt-dc-hc}.
	Since $\lceil K/2\rceil \ge \bar{k}+1$,
	we have that $\gamma_{k} \equiv \gamma_{\bar{k}+1} \eqqcolon \bar{\gamma}$ for all $k \ge \lceil K/2\rceil$.
	Invoking Lemma~\ref{lm-kbar-conv}(ii),
	we obtain that ${\widetilde P}_{k+1}-\psi_{*}\ge 0$ for all $k \ge \lceil K/2\rceil$.
	Moreover, with $\omega_k \coloneqq \|x^{k+1} -x^k\|/\mu_k$ defined as in \eqref{eq-omega}, we see that
	\begin{align*}
		&  \min_{\lceil K/2\rceil \le k \le K}
		 \{{\widetilde P}_{k+1} -  \psi_{*}
		+  \bar{\gamma}^{-1} \omega_{k+1}^2 \}
		S_K \le \sum_{k=\lceil K/2\rceil}^{K}
		    \left( {\widetilde P}_{k+1} - \psi_{*}
		 + \bar{\gamma}^{-1} \omega_{k+1}^2\right)\mu_{k}
		\\ & \le
		C_3 \sum_{k=\lceil K/2\rceil}^{K}   \mu_k^2
		+  C_4  \left({\widetilde P}_{\lceil K/2\rceil} - {\widetilde P}_{K+1}\right)
		 +  \frac{M_{L}}{2 \bar{\gamma}} \left(\| x^{\lceil K/2\rceil} - x^*\|^2\! -\! \|  x^{K+1} - x^*\|^2 \right)
		\\ & \le
		 C_3 \mu_{\lceil K/2\rceil} S_{K}
		+  C_4  B_1
		+  \frac{M_{L}m_{\mathcal{C}}^2}{2 \eta_{\gamma}},
	\end{align*}
	where the second inequality follows from \eqref{eq-key-conv-2},
	and the last inequality holds thanks to the nonincreasing property of $\{\mu_k\}$,
	\eqref{eq-gamma-pos-low-bd}, $x^{\lceil K/2\rceil}, x^{k+1}, x^*\in \mathcal{C}$,
	 and the fact that
	\begin{align*}
		\psi_{*} &\le {\widetilde P}_{k}  = \psi(x^{k})
		+ \bar{\gamma}^{-1} ( g_{\mu_{k}}(x^{k}) + \alpha_{4}  \mu_{k} )
		\le  \psi(x^{k})
		+ \bar{\gamma}^{-1} ( g^{+}(x^{k}) + (\alpha_{3,2} + \alpha_{4})  \mu_{k} )
		\\ & \le m_{\psi} + \eta_{\gamma}^{-1}
		(m_{g} +  ([\alpha_{3,2}]_{+} + \alpha_{4})  \mu_{0} ) = B_1 + \psi_* \quad \forall k>\bar{k};
	\end{align*}
	here we use \eqref{eq-gmu-appr} to obtain the second inequality, and use $\mu_k \le \mu_{0}$
	and the definitions of $m_{\psi}$ and $m_{g}$ to get the last inequality.
	Therefore, there exists an integer $\widehat{k} \in [\lceil K/2\rceil, K]$ such that
	\begin{align}\label{eq-comple-penalty}
		{\widetilde P}_{\widehat{k}+1} -  \psi_{*} + \bar{\gamma}^{-1}\omega_{\widehat{k}+1}^2
		& \le C_3 \mu_{\lceil K/2\rceil}
		 +  \frac{2C_4\eta_\gamma B_1 + M_{L}m_{\mathcal{C}}^2}{2 \eta_{\gamma} S_{K}}
		 \le \min\{ \epsilon/2, \eta_{\gamma} c_2^2\},
	\end{align}
	where the last inequality holds thanks to \eqref{eq-cond-conv-SK} and \eqref{eq-cond-conv-muK/2}.
	In addition, by applying the definition of ${\widetilde P}_{\widehat{k}+1}$
	in \eqref{eq-P-def}, we have
	\begin{align}\label{eq-phi-up}
		& \psi(x^{\widehat{k}+1}) - \psi_{*} = {\widetilde P}_{\widehat{k}+1} -  \psi_{*} - \bar{\gamma}^{-1} \left(g_{\mu_{\widehat{k}+1}}(x^{\widehat{k}+1}) + \alpha_4 \mu_{\widehat{k}+1} \right)
        \notag
        \\ & \le {\widetilde P}_{\widehat{k}+1} -  \psi_{*}
        - \bar{\gamma}^{-1} \left(g^{+}(x^{\widehat{k}+1}) + (\alpha_4 - \alpha_{3,1}) \mu_{\widehat{k}+1} \right)
        \notag
        \\ & \le \frac{\epsilon}{2}
        + [\alpha_{3,1}]_{+} \bar{\gamma}^{-1}   \mu_{\widehat{k}+1}
        \le \frac{\epsilon}{2} + [\alpha_{3,1}]_{+} \eta_{\gamma}^{-1}   \mu_{\lceil K/2 \rceil}
        \le \epsilon,
	\end{align}
    where the first inequality follows from \eqref{eq-gmu-appr},
    the second inequality follows from \eqref{eq-comple-penalty},
    the third inequality holds thanks to \eqref{eq-gamma-pos-low-bd} and
    $\mu_{\widehat{k}+1} \le \mu_{\lceil K/2 \rceil}$, and the last inequality holds because of \eqref{eq-cond-conv-muK/2}.
	
	On the other hand, it follows from \eqref{eq-comple-penalty} that
	\begin{equation}
		\omega_{\widehat{k}+1}^2
		\le \min\{\bar{\gamma} \epsilon/2, \bar{\gamma} \eta_{\gamma} c_2^2\}
		\le \min \{\widehat{\gamma}_{0}\epsilon/2,  c_2^2 \gamma_{\widehat{k}}^2 \},
        \notag
	\end{equation}
	where the last inequality holds thanks to $\bar{\gamma} = \gamma_{\widehat{k}} \le \widehat{\gamma}_{0}$
    (since we have $\widehat{k} > \bar k$) and $\eta_{\gamma} \le \gamma_{\widehat{k}}$ (see \eqref{eq-gamma-pos-low-bd}).
	Combining the definition of $\omega_{\widehat{k}+1}$ with the last display, we obtain that $\|x^{\widehat{k}+1} - x^{\widehat{k}}\|
	\le c_2 \gamma_{\widehat{k}} \mu_{\widehat{k}}$.
	Since $\widehat{k} \ge \lceil K/2\rceil \ge \bar{k}+1$,
	invoking the definition of $\bar{k}$ and {\bf Step~4} of Algorithm~\ref{alg-sl-sm-esqm},
	this further yields that $h_{\mu_{\widehat{k}}}(c(x^{\widehat{k}+1}) )
			 \le 2 \alpha_{3}^{\varphi} \mu_{\widehat{k}}$.
	Invoking this and Lemma~\ref{lm-properties}(iii), we obtain that
	\begin{align}
		& g^{+}(x^{\widehat{k}+1})
		 \le ( 3 \alpha_{3}^{\varphi} + [\alpha_{3, 1}^{h}]_{+}
		) \mu_{\widehat{k}}  \le \epsilon,
		\label{eq-feas}
	\end{align}
	where the last inequality follows from the fact that $\mu_{\widehat{k}} \le \mu_{\lceil K/2\rceil}$ and \eqref{eq-cond-conv-muK/2}.
	
	Moreover, since ${\widetilde P}_{\widehat{k}+1} - \psi_{*} \ge 0$,
	by the definition of ${\widetilde P}_{\widehat{k}+1}$ in \eqref{eq-P-def},
	we have
	\begin{align*}
		& \psi_{*} - \psi(x^{\widehat{k}+1})
		 \le \bar{\gamma}^{-1} ( g_{\mu_{\widehat{k}+1}}(x^{\widehat{k}+1}) + \alpha_{4}  \mu_{\widehat{k}+1} )
		 \overset{\mathrm{(a)}}{\le} \bar{\gamma}^{-1} ( g_{\mu_{\widehat{k}}}(x^{\widehat{k}+1}) + \alpha_{4}  \mu_{\widehat{k}} )
         \\ &
        \overset{\mathrm{(b)}}{\le} \bar{\gamma}^{-1} (\alpha_{4} + 2 \alpha_{3}^{\varphi} + [\alpha_{3,2}^{\varphi}]_{+})
		\mu_{\widehat{k}}
		\overset{\mathrm{(c)}}{\le}   \eta_{\gamma}^{-1} (\alpha_{4} + 2 \alpha_{3}^{\varphi} + [\alpha_{3,2}^{\varphi}]_{+})
		\mu_{\widehat{k}}
		 \overset{\mathrm{(d)}}{\le}  \epsilon,
	\end{align*}
	where (a) holds because of \eqref{eq-gmu-Lips}
	and $\mu_{\widehat{k}+1} \le \mu_{\widehat{k}}$,
	(b) follows from the first relation in \eqref{eq-feas-upbd}, (c) follows from \eqref{eq-gamma-pos-low-bd}, and
	(d) holds thanks to \eqref{eq-cond-conv-muK/2} and the fact that $\mu_{\hat k}\le \mu_{\lceil K/2\rceil}$.
	Combining the last display with \eqref{eq-phi-up} and \eqref{eq-feas},
	we conclude that \eqref{eq-conv-eps-solution} holds.

	Finally, suppose further that $\sum_{k=0}^{\infty} \mu_k^2 < \infty$.
	Since ${\widetilde P}_{k+1} \ge \psi_{*}$ for all $k>\bar{k}$,
	we have from \eqref{eq-key-conv-2} that, for every $x^*\in \Omega^*$,
	\begin{align}
		\|  x^{k+1} - x^*\|^2
		& \le \| x^{k} - x^*\|^2 	+  \frac{2\bar \gamma C_3}{ M_{L}} \mu_{k}^2 +
		\frac{2 \bar\gamma C_4}{M_L} ({\widetilde P}_{k} - {\widetilde P}_{k+1}) \notag
		\quad \forall k > \bar{k}.
	\end{align}
	Note that, for each $k> \bar{k}$, ${\widetilde P}_k - {\widetilde P}_{k+1}$ is nonnegative (see \eqref{eq-des-P-2}).
	Moreover, since $\{{\widetilde P}_k\}_{k > \bar{k}}$ is bounded below by $\psi_{*}$ (see Lemma~\ref{lm-kbar-conv}(ii)),
	we have that $\{{\widetilde P}_k - {\widetilde P}_{k+1}\}_{k> \bar{k}}$ is summable.
	Combining this fact with Remark~\ref{rm-stationary-pen} and \cite[Proposition~1]{03Iusem},
	we deduce that
	$\{x^{k}\}$ converges to $\bar{x} \in \Omega^*$,
    which is also a solution of \eqref{opt-pen} in view of Lemma~\ref{lm-kbar-conv}(i).
\end{proof}

We next turn to the local convergence rate. We first present an auxiliary lemma concerning a particular difference inequality.
The induction argument in the proof follows the same approach used in \cite[Lemma 4.2]{24LMX} and \cite[Theorem 4.5]{25XPS}.

\begin{lemma}\label{lm-rate}
Let $r_1 \in (0, 1)$, $r_2 > r_1$, $p\ge 1$, $a_0 > 0$, $a_1 > 0$ and $N_1 \in \mathbb{N}_0$.
    Let $\{\Delta_{k}\}$ be a nonnegative sequence such that
    \begin{equation}\label{eq-rec}
         \Delta_{k+1} + a_0 (1+k)^{-r_1} \Delta_{k+1}^{p}
         \le \Delta_{k} + a_1 (1+k)^{-r_2} \quad \forall k \ge N_1.
    \end{equation}
    Then there exist $a_2> 0$ and $N_2\in \mathbb{N}_0$ such that
    \begin{equation}\label{eq-Delta-rate-lm}
        \Delta_{k} \le a_2 (1+k)^{-s} \quad \forall k\ge N_2,
    \end{equation}
    where
    \begin{equation}\label{eq-s-aux}
        s = \begin{cases}
            r_2 - r_1 & \mbox{if}\ p = 1,
            \\ \min\left\{ \frac{r_2 - r_1}{p}, \frac{1-r_1}{p - 1} \right\}
            & \mbox{if}\ p > 1.
        \end{cases}
    \end{equation}
\end{lemma}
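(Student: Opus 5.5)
Since $\Delta_k \ge 0$ and $\Delta_{k+1} \le \Delta_k + a_1(1+k)^{-r_2}$, the sequence is bounded on bounded index ranges, but it need not tend to zero a priori. I therefore plan a direct induction: show that $\Delta_k \le a_2(1+k)^{-s}$ propagates from $k$ to $k+1$ once $k$ is large and $a_2$ is large. The base case is handled by taking $a_2 \ge \Delta_{N_2}(1+N_2)^{s}$ at a suitably chosen starting index $N_2 \ge N_1$.

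For the inductive step, fix $k\ge N_2$ and suppose $\Delta_k \le a_2(1+k)^{-s}$. Write $b_{k+1} \coloneqq a_2(2+k)^{-s}$ for the target bound at $k+1$, and argue by contradiction: suppose $\Delta_{k+1} > b_{k+1}$. The function $t\mapsto t + a_0(1+k)^{-r_1}t^p$ is increasing on $[0,\infty)$, so \eqref{eq-rec} gives
\[
b_{k+1} + a_0(1+k)^{-r_1} b_{k+1}^p < a_2(1+k)^{-s} + a_1(1+k)^{-r_2}.
\]
By the mean value theorem, $a_2(1+k)^{-s} - a_2(2+k)^{-s} \le s\,a_2 (1+k)^{-s-1}$. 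Substituting this, the displayed inequality implies
\[
a_0 a_2^p (1+k)^{-r_1}(2+k)^{-sp} < s\,a_2 (1+k)^{-1-s} + a_1 (1+k)^{-r_2}.
\]
Since $(2+k)\le 2(1+k)$, the left side is at least $a_0 2^{-sp} a_2^p (1+k)^{-r_1 - sp}$.

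It then remains to compare exponents and show the reverse inequality holds, which gives the contradiction. The right side is at most twice the larger of its two terms, so it suffices to get
\[
a_0 2^{-sp} a_2^p (1+k)^{-r_1-sp} \ge 2 s\,a_2 (1+k)^{-1-s}
\quad\text{and}\quad
a_0 2^{-sp} a_2^p (1+k)^{-r_1-sp} \ge 2 a_1 (1+k)^{-r_2}.
\]
The choice of $s$ in \eqref{eq-s-aux} gives $r_1+sp \le r_2$, so the second inequality holds once $a_2^p \ge 2^{1+sp}a_1/a_0$.

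For $p>1$, the choice of $s$ also gives $s(p-1) \le 1-r_1$, that is, $r_1+sp \le 1+s$. The first inequality then holds once $a_2^{p-1} \ge 2^{1+sp}s/a_0$.

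For $p=1$ we have $s=r_2-r_1$, and this case is more delicate. The first requirement becomes $a_0 2^{-s} (1+k)^{-r_1-s} \ge 2s(1+k)^{-1-s}$, which has no $a_2$ left to tune. It does hold for all $k$ large, because $r_1<1$ means $(1+k)^{-r_1}$ dominates $(1+k)^{-1}$. This is exactly why I must allow $N_2$ to be large rather than starting at $N_1$.

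So the order of choices is as follows. First fix $N_2 \ge N_1$ large enough for the $p=1$ comparison; in general any $N_2\ge N_1$ works, though I will take it large for safety. Then choose $a_2$ to exceed all three quantities: $\Delta_{N_2}(1+N_2)^s$, the constant from the $a_1$ comparison, and the constant from the $s\,a_2$ comparison when $p>1$. With these choices the contradiction closes the induction, which yields \eqref{eq-Delta-rate-lm}.

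I expect the main obstacle to be the bookkeeping of the constants, especially keeping the factor $2^{-sp}$ from replacing $(2+k)$ by $(1+k)$. I also need to check that the mean-value correction term $s\,a_2(1+k)^{-1-s}$ is genuinely dominated in both cases: by tuning $a_2$ when $p>1$, and by taking $k$ large when $p=1$.
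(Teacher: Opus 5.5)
Your proposal is correct and follows essentially the same argument as the paper. Both proofs run an induction using the monotonicity of $t\mapsto t+a_0(1+k)^{-r_1}t^p$, the bound $(1+k)^{-s}-(2+k)^{-s}\le s(1+k)^{-1-s}$, the estimate $2+k\le 2(1+k)$, and the exponent comparisons $r_1+sp\le r_2$ and, for $p>1$, $r_1+sp\le 1+s$. Both take $N_2$ large when $p=1$ and $a_2$ large when $p>1$, and your thresholds coincide exactly with the paper's $2b_1$ and $2b_2$, where $b_1=s a_0^{-1}2^{sp}$ and $b_2=a_1a_0^{-1}2^{sp}$. Arguing by contradiction instead of directly showing $\phi(\Delta_{k+1})-\phi(a_2(2+k)^{-s})\le 0$ is only a cosmetic difference.
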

\begin{proof}
    Let
    \begin{equation}\label{eq-def-N2-aux}
        N_2 = \max\{ N_1,\ \lceil  (2{b_1})^{\frac{1}{1 - r_1}} \rceil \}
    \end{equation}
    and
    \begin{equation} \label{eq-def-a2}
        a_2 = \begin{cases}
             \max \left\{ \frac{\Delta_{N_2}}{(1+N_2)^{-s}},\ 2b_2 \right\} & \mbox{if}\ p = 1,
            \\
            \max \left\{ \frac{\Delta_{N_2}}{(1+N_2)^{-s}},
            \ (2b_2)^{\frac{1}{p}},
            \ (2 b_1)^{\frac{1}{p-1}} \right\} &  \mbox{if}\ p > 1,
        \end{cases}
    \end{equation}
    where
    \begin{equation}\label{eq-def-b12}
        b_1 = s a_0^{-1}  2^{sp}
        \quad \mbox{and} \quad
        b_2 = a_1 a_0^{-1} 2^{sp}.
    \end{equation}
    We prove by induction that \eqref{eq-Delta-rate-lm} holds
    with $a_2$ and $N_2$ given above.

    Clearly, $\Delta_{N_2} \le a_2 (1+N_2)^{-s}$ by the definition of $a_2$.

    Suppose now that $\Delta_{k} \le a_2 (1+k)^{-s}$ for some $k\ge N_2$.
    We aim to show that it also holds for $k+1$.
    To this end, define
    \begin{equation}\label{eq-def-phi}
        \phi(t) \coloneqq t + a_0 (1+k)^{-r_1} t^{p} \quad \forall t\ge 0.
    \end{equation}

    We claim that $\phi( \Delta_{k+1} ) - \phi( a_2 (2+k)^{-s} ) \le 0$,
    which, together with the (strictly) increasing property of $\phi$ on $[0, \infty)$,
    completes the proof.

    Indeed, we have from \eqref{eq-def-phi} and \eqref{eq-rec} that
    \begin{align}
       &  \phi( \Delta_{k+1} ) - \phi( a_2 (2+k)^{-s} ) \notag
        \\ & \le \Delta_{k} + a_1 (1+k)^{-r_2} - a_2 (2+k)^{-s} -  a_0 a_2^{p} (1+k)^{-r_1}(2+k)^{-sp} \notag
        \\ & \le a_2 (1+k)^{-s}  + a_1 (1+k)^{-r_2} - a_2 (2+k)^{-s}
        -  a_0 a_2^{p} 2^{-sp} (1+k)^{-(sp + r_1)} \notag
        \\ & \le s a_2 (1+k)^{-(1+s)} + a_1 (1+k)^{-r_2}
        -  a_0 a_2^{p} 2^{-sp} (1+k)^{-(sp + r_1)} \notag
        \\ & = b_0 (1+k)^{-\widetilde{r}_0} \left(
        b_1 a_2^{1-p} (1+k)^{- \widetilde{r}_1} + b_2 a_2^{-p} (1+k)^{-\widetilde{r}_2}
        -  1
        \right), \label{eq-phi-k1}
    \end{align}
    where the second inequality follows from the induction hypothesis and the fact that $k+2 \le 2(k+1)$,
    the third inequality holds thanks to the convexity of $t\mapsto t^{-s}$ for $t \in (0, \infty)$,
    and the equality holds because of the definitions of $b_1$ and $b_2$ in \eqref{eq-def-b12}, upon defining
    \begin{align}
     b_0 \coloneqq a_0 2^{-sp} a_2^{p}, \quad \widetilde{r}_0 \coloneqq sp + r_1,
        \quad \widetilde{r}_1 \coloneqq s(1-p) + 1 - r_1,
        \quad \widetilde{r}_2 \coloneqq - sp + r_2 -r_1. \label{eq-def-td-r}
    \end{align}

    Now, we consider separately the cases $p=1$ and $p>1$.

    \noindent {\bf Case (i)}. $p = 1$. In this case, we have
    from \eqref{eq-s-aux} that $s = r_2 - r_1$, and hence
    $\widetilde{r}_1 = 1 - r_1$ and
    $\widetilde{r}_2 = 0$.
    Combining this with \eqref{eq-phi-k1}, we obtain
    \begin{align*}
        \phi( \Delta_{k+1} ) - \phi( a_2 (2+k)^{-s} )
       &   \le b_0  (1+k)^{-\widetilde{r}_0} (
        b_1 (1+k)^{- (1-r_1)} + b_2 a_2^{-1} -  1
        )
        \\ & \le b_0 (1+k)^{-\widetilde{r}_0} (
        b_1 (1+N_2)^{- (1-r_1)} -  \tfrac{1}{2}
        ) \le 0,
    \end{align*}
    where the second and the final inequalities hold, respectively, because
    $a_2 \ge 2 b_2$ (see \eqref{eq-def-a2}) and $N_2 \ge (2{b_1})^{\frac{1}{1 - r_1}}$ (see \eqref{eq-def-N2-aux}).

    \noindent {\bf Case (ii)}. $p > 1$. In this case, using \eqref{eq-s-aux} and \eqref{eq-def-td-r}, we have that
    $\widetilde{r}_1 \ge 0$ and
    $\widetilde{r}_2 \ge 0$.
    This, together with \eqref{eq-phi-k1}, shows that
        \begin{align}
       &  \phi( \Delta_{k+1} ) - \phi( a_2 (2+k)^{-s} ) \notag
         \le b_0  (1+k)^{-\widetilde{r}_0} \left(
        b_1 a_2^{1-p} + b_2 a_2^{-p} -  1
        \right) \le 0,
    \end{align}
    where the last inequality holds thanks to
    $b_1 a_2^{1-p} \le \frac{1}{2}$ and $b_2 a_2^{-p} \le \frac{1}{2}$,
    in view of \eqref{eq-def-a2}.
\end{proof}

Our local convergence rate result is based on the following stronger assumption on $\{\mu_k\}$ (compared with Assumption~\ref{ass-mu}(i)) and the H\"olderian error bound condition in \eqref{eq-KL-theta} concerning the objective function of \eqref{opt-pen}. The latter condition was also used in \cite[Theorem~4.5(iii)]{25ZPX} for studying local convergence rate of the extended sequential quadratic method with extrapolation; see \cite[Section~5]{25ZPX} for further discussions on this error bound condition.
\begin{assumption}\label{ass-mu-conv}
    There exist $\bar r \in (\frac{1}{2}, 1)$, $b_1>0$, $b_2>0$ and $k_1\in \mathbb{N}_{0}$ such that
    \begin{equation}\label{eq-mu-bd}
        b_1 (1 + k)^{-\bar r} \le \mu_{k} \le b_2 (1 + k)^{-\bar r} \quad \forall k \ge k_1.
    \end{equation}
\end{assumption}
\begin{theorem}\label{thm_local_rate}
    Consider \eqref{opt-dc-hc}.
	Suppose that Assumptions~\ref{ass-gen},
	\ref{ass-cq}, \ref{ass-mu}, \ref{ass-conv} and \ref{ass-mu-conv} hold.
    Let $\psi_{*}$ and $\Omega^*$ be the optimal value and the solution set of \eqref{opt-dc-hc}, respectively.
    Let $\{x^{k}\}$ and $\{\gamma_k\}$ be generated by Algorithm~\ref{alg-sl-sm-esqm}.
    Let $\bar{\gamma}$ be specified as in Lemma~\ref{lm-kbar-conv}.
    Assume there exist $\kappa>0$, $\theta\in [0, 1)$, $\varepsilon_0>0$ and $\varepsilon_1\in (0,1)$
	such that
	\begin{equation}\label{eq-KL-theta}
		\mathrm{dist}(x, \Omega^*) \le \kappa (\psi(x) + \bar{\gamma}^{-1} g^{+}(x) - \psi_{*})^{1-\theta}
	\end{equation}
	for all $x\in \mathbb{X}$ with $\mathrm{dist}(x, \Omega^*)\le
	\varepsilon_0$
	and $\psi(x) + \bar{\gamma}^{-1} g^{+}(x) \le \psi_{*} + \varepsilon_1$.
    Then $\{x^{k}\}$ converges to an $\bar{x} \in \Omega^*$, and there exist
    $\widetilde{\kappa} > 0$ and $\widetilde{k}\in \mathbb{N}_0$ such that
    \begin{equation}\label{eq-conv-rate}
        \| x^{k} - \bar{x} \| \le \widetilde{\kappa} (1 + k)^{-s} \quad \forall k \ge \widetilde{k},
    \end{equation}
    where
    		\begin{equation}\label{eq-rate-s}
			s \coloneqq s(\bar r) \coloneqq \begin{cases}
				{\bar r} - \frac{1}{2}
				& \mbox{if } \theta \in [0, \frac{1}{2}],\\
				\min\left\{{\bar r} - \frac{1}{2}, \frac{(1-{\bar r})(1-\theta)}{2\theta-1}\right\}
				& \mbox{if } \theta\in (\frac{1}{2}, 1).
			\end{cases}
		\end{equation}
\end{theorem}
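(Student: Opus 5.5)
Under Assumption~\ref{ass-mu-conv} we have $\sum_k\mu_k^2<\infty$, because $2\bar r>1$. Theorem~\ref{th-compl-conv} therefore already gives $x^k\to\bar x\in\Omega^*$, with $\bar x$ also solving \eqref{opt-pen}. What remains is the rate. The plan is to obtain a recursion of the form \eqref{eq-rec} for $\Delta_k\coloneqq {\rm dist}^2(x^k,\Omega^*)$, perturbed by a summable error, and then pass from the distance to the solution set to the distance to $\bar x$.

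\textbf{Step 1 (one-step inequality).} Apply Lemma~\ref{lm-kbar-conv}(iii) with $x^*=P_{\Omega^*}(x^k)$, the projection of $x^k$ onto the closed convex set $\Omega^*$. Using $\|x^{k+1}-x^*\|\ge {\rm dist}(x^{k+1},\Omega^*)$, for $k>\bar k$ this gives
\[
\Delta_{k+1}+C_0\mu_k\widehat Q_{k+1}\le \Delta_k+C_1\mu_k^2+C_2(\mu_k\widehat Q_k-\mu_{k+1}\widehat Q_{k+1}).
\]
Since $x^k\to\bar x$, eventually ${\rm dist}(x^{k+1},\Omega^*)\le\varepsilon_0$. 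From Lemma~\ref{lm-kbar-conv}(ii) we have $\widetilde P_k\to\psi_*$, and $\widehat Q_{k+1}\le \widetilde P_{k+1}-\psi_*+O(\mu_{k+1})$, so eventually $\widehat Q_{k+1}\le\varepsilon_1$. The error bound \eqref{eq-KL-theta} then yields $\widehat Q_{k+1}\ge \kappa^{-1/(1-\theta)}\Delta_{k+1}^{p}$ with $p\coloneqq \frac{1}{2(1-\theta)}$. When $\theta<\frac12$ we have $p<1$; since $\Delta_k$ is bounded, we can replace $p$ by $1$ (i.e., treat $\theta\in[0,\frac12]$ uniformly with $p=1$) at the cost of a constant. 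Combining with \eqref{eq-mu-bd}, we get $\mu_k\ge b_1(1+k)^{-\bar r}$ and $\mu_k^2\le b_2^2(1+k)^{-2\bar r}$.

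\textbf{Step 2 (absorbing the telescoping term).} The term $C_2(\mu_k\widehat Q_k-\mu_{k+1}\widehat Q_{k+1})$ does not fit \eqref{eq-rec} directly. Define $\widetilde\Delta_k\coloneqq\Delta_k+C_2\mu_k\widehat Q_k\ge\Delta_k$. The inequality then reads $\widetilde\Delta_{k+1}+a_0(1+k)^{-\bar r}\Delta_{k+1}^{p}\le\widetilde\Delta_k+a_1(1+k)^{-2\bar r}$. To apply Lemma~\ref{lm-rate} we need $\widetilde\Delta_{k+1}^p$ on the left, so we must bound $\mu_{k+1}\widehat Q_{k+1}$ in terms of $\Delta_{k+1}$ plus admissible error terms.

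\textbf{Main obstacle.} I expect this bound to be the hardest step. One option is to split $C_0\mu_k\widehat Q_{k+1}$ into two halves: one half controls $\Delta_{k+1}^p$ through the error bound, and the other controls $\mu_{k+1}\widehat Q_{k+1}$. The second half dominates a multiple of $(\mu_{k+1}\widehat Q_{k+1})^{p}$, because $\mu_k\ge\mu_{k+1}$, $\widehat Q_{k+1}\to0$ and $\mu_{k+1}\to0$, so eventually $\mu_{k+1}\widehat Q_{k+1}\le (\mu_{k+1}\widehat Q_{k+1})^{\cdot}$ at the right power up to constants (the exponent here is still to be pinned down). Together with $(x+y)^p\le 2^{p-1}(x^p+y^p)$ this gives $\widetilde\Delta_{k+1}^p\lesssim \mu_k^{-1}\cdot(\text{left-hand side gain})$. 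Lemma~\ref{lm-rate} then applies with $r_1=\bar r$ and $r_2=2\bar r$. For $p=1$ it gives $s=\bar r$ for $\widetilde\Delta_k$, hence ${\rm dist}(x^k,\Omega^*)=O((1+k)^{-\bar r/2})$. For $p>1$ it gives $\min\{\bar r/p,(1-\bar r)/(p-1)\}$, and the second entry translates to $\frac{2(1-\bar r)(1-\theta)}{2\theta-1}$. If the absorption fails, the fallback is to keep $C_2(\cdot)$ as a summable perturbation and tolerate a weaker constant.

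\textbf{Step 3 (from $\Omega^*$ to $\bar x$).} Converting a rate for ${\rm dist}(x^k,\Omega^*)$ into a rate for $\|x^k-\bar x\|$ is where the exponents of \eqref{eq-rate-s} get halved. By Lemma~\ref{lm-kbar-conv}(iii) with $x^*=P_{\Omega^*}(x^k)$ fixed, summing from $k$ to $j$ and letting $j\to\infty$ shows that $\|x^j-P_{\Omega^*}(x^k)\|^2\le\Delta_k+\sum_{i\ge k}(C_1\mu_i^2+\ldots)$. The tail $\sum_{i\ge k}\mu_i^2\sim(1+k)^{1-2\bar r}$ forces the exponent $\bar r-\frac12$. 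Hence $\|\bar x-P_{\Omega^*}(x^k)\|\lesssim \sqrt{\Delta_k}+(1+k)^{\frac12-\bar r}$. Adding ${\rm dist}(x^k,\Omega^*)=\sqrt{\Delta_k}$ gives \eqref{eq-conv-rate} with $s=\min\{\bar r-\frac12,\ \text{Step-2 rate}/2\}$. In the $\theta>\frac12$ case this halves the second branch to $\frac{(1-\bar r)(1-\theta)}{2\theta-1}$, matching \eqref{eq-rate-s}. The telescoping $\mu_i\widehat Q_i$ contributions in the tail are handled as in Step~2.
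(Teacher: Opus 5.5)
Your argument is correct in substance, but it is organized differently from the paper's.

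\textbf{How the paper argues.} The paper anchors the potential at the limit point. It sets $\Delta_k=\|x^k-\bar x\|^2+C_1\tau_k+C_2\mu_k\widehat Q_k$ with $\tau_k=\sum_{i\ge k}\mu_i^2$, so that Lemma~\ref{lm-kbar-conv}(iii) with $x^*=\bar x$ telescopes exactly into $C_0\mu_k\widehat Q_{k+1}\le\Delta_k-\Delta_{k+1}$. It then uses the same Fej\'er-tail argument as your Step~3 to get $\|x^k-\bar x\|^2\le 4\,{\rm dist}(x^k,\Omega^*)^2+2C_5\tau_k$. It disposes of $C_2\mu_k\widehat Q_k$ through $\mu_k\lesssim\tau_k$, which needs the lower bound in \eqref{eq-mu-bd} and $2\bar r-1<\bar r$. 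This gives $\Delta_{k+1}\le 4\kappa^2\widehat Q_{k+1}^{1/p}+3C_5\tau_{k+1}$. Lemma~\ref{lm-rate} then applies with $r_1=\bar r$ and $r_2=(2\bar r-1)p+\bar r$, and the resulting exponent is exactly $2s$, so there is nothing to reconcile afterwards.

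\textbf{How your route differs.} You run Lemma~\ref{lm-rate} on $\widetilde\Delta_k={\rm dist}^2(x^k,\Omega^*)+C_2\mu_k\widehat Q_k$ with $r_2=2\bar r$. You pay the $\bar r-\frac12$ price only when passing from $P_{\Omega^*}(x^k)$ to $\bar x$. This buys a sharper intermediate statement: ${\rm dist}(x^k,\Omega^*)=O((1+k)^{-s'/2})$ with $s'=\min\{\bar r/p,(1-\bar r)/(p-1)\}$, e.g.\ $(1+k)^{-\bar r/2}$ when $\theta\le\frac12$. It also isolates the tail $\sum_{i\ge k}\mu_i^2$ as the sole source of the $\bar r-\frac12$ bottleneck.

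\textbf{Two loose ends need closing.}

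First, the absorption you flagged as the main obstacle works with exactly $p=\max\{1,\frac{1}{2(1-\theta)}\}$. Once $\mu_{k+1}\le 1$ and $\widehat Q_{k+1}\le 1$, you have $(\mu_{k+1}\widehat Q_{k+1})^p\le\widehat Q_{k+1}$. Hence half of $C_0\mu_k\widehat Q_{k+1}$ dominates $\frac{C_0}{2}\mu_k(\mu_{k+1}\widehat Q_{k+1})^p$. Combined with $(a+b)^p\le 2^{p-1}(a^p+b^p)$, this gives $\widetilde\Delta_{k+1}+a_0(1+k)^{-\bar r}\widetilde\Delta_{k+1}^p\le\widetilde\Delta_k+a_1(1+k)^{-2\bar r}$. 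Your fallback would not have fitted Lemma~\ref{lm-rate}: bounding the telescoping term by $C_2\mu_k\widehat Q_k\lesssim(1+k)^{-\bar r}$ gives $r_2=r_1$, which the lemma excludes.

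Second, for $\theta>\frac12$ your final exponent is $\min\{\bar r-\frac12,\ \bar r(1-\theta),\ \frac{(1-\bar r)(1-\theta)}{2\theta-1}\}$. You must check that the middle term is redundant:
\begin{itemize}
\item if $\theta\le\frac{1}{2\bar r}$, then $\bar r(1-\theta)\ge\bar r-\frac12$;
\item if $\theta>\frac{1}{2\bar r}$, then $\bar r(1-\theta)>\frac{(1-\bar r)(1-\theta)}{2\theta-1}$.
\end{itemize}
For $\theta\le\frac12$, the analogous check is simply $\frac{\bar r}{2}>\bar r-\frac12$, which holds since $\bar r<1$.
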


	\begin{remark}
		\begin{enumerate}[{\rm (i)}]
			\item The local convergence rate in \eqref{eq-conv-rate} for {\em s}ESQM coincides with that established in \cite[Theorem~4.5]{25XPS} for {\em s}MBA.
			Recall that {\em s}MBA is a feasible method for \eqref{opt-dc-hc} with ${\cal C} = \mathbb{R}^n$ and $h$ chosen as in Example~\ref{ex-conic constraint}, and relies on {\em strictly feasible} initial point and the efficient evaluation of the prox-broximal operators as discussed in \cite[Section~6]{25XPS}, whereas {\em s}ESQM is an infeasible method and requires efficient evaluation of the proximal operator of $\tau P_1 + \delta_{\mathcal{C}}$ for any $\tau > 0$.
			
			\item We also note that, in the setting of $h(y) \coloneqq \max_{i\in [m]} y_i$ for $y \in \mathbb{Y} = \mathbb{R}^{m}$,
			the convergence rate in \eqref{eq-conv-rate} appears to be worse than that of ESQM and its variant; see, e.g., \cite[Theorem~4.5]{25ZPX}.
			However, notice that each subproblem in {\em s}ESQM only requires the evaluation of the proximal operator of $\tau P_1 + \delta_{\mathcal{C}}$ for some $\tau > 0$, while the subproblems considered in \cite{25ZPX} typically require an iterative solver and can only be solved inexactly in general. Thus, the costs of each outer iteration of the two methods are not directly comparable.
			
		\end{enumerate}
	\end{remark}

\begin{proof}[Proof of Theorem~\ref{thm_local_rate}]
Under Assumption~\ref{ass-mu-conv},
one can see that $\sum_{k=0}^{\infty} \mu_k^2 < \infty$. This, together with Theorem~\ref{th-compl-conv},
yields that the sequence $\{x^{k}\}$ converges
to some $\bar{x}$ that solves both \eqref{opt-dc-hc} and \eqref{opt-pen}.
Therefore, there exists an integer $k_{2} \in \mathbb{N}_{0}$ such that
\begin{equation}\label{eq-k2}
    \mathrm{dist}(x^{k}, \Omega^*)\le
	\varepsilon_0 \quad \mbox{and} \ \quad
   {\widehat Q}_{k} \coloneqq \psi(x^{k}) + \bar{\gamma}^{-1} g^{+}(x^{k}) - \psi_{*} \le \varepsilon_1 < 1
    \quad \forall k\ge k_{2},
\end{equation}
which implies that $x^{k}$ satisfies \eqref{eq-KL-theta}, that is,
\begin{equation}\label{eq-dist-psi}
    \mathrm{dist}(x^{k}, \Omega^*) \le \kappa {\widehat Q}_{k}^{1-\theta}
    \quad \forall k\ge k_2.
\end{equation}

For notational simplicity, we denote
\begin{equation} \label{eq-def-tau-Delta}
 \tau_{k} \coloneq \sum_{i=k}^{\infty} \mu_i^2
 \quad \mbox{and} \quad    \Delta_{k} \coloneqq \| x^{k} - \bar x\|^2 +
    C_1 \tau_{k} + C_2 \mu_k {\widehat Q}_{k} \quad \forall k\in \mathbb{N}_0,
\end{equation}
where $C_1$ and $C_2$ are given in \eqref{eq-def-C012}.

First, we note from \eqref{eq-mu-bd} that, for every $k\ge k_1$,
\begin{align}\label{eq-tau-up-bd}
     & \tau_{k+1}
    \le b_2^2 \sum_{i=k+1}^{\infty} (1 + i)^{-2\bar r}
     \le b_2^2 \int_{k}^{\infty} (1+t)^{-2 \bar{r}} dt
     = b_2^2 (2 \bar{r} - 1)^{-1} (1+k)^{-(2 \bar{r} - 1)}.
\end{align}
Moreover, we have, for every $k\ge k_1$,
\begin{align}
     & \tau_{k} \ge b_1^2 \sum_{i=k}^{\infty} (1 + i)^{-2\bar r}
    \ge b_1^2 \int_{k}^{\infty} (1+t)^{-2 \bar{r}} dt \notag
     = b_1^2 (2 \bar{r} - 1)^{-1} (1+k)^{-(2 \bar{r} - 1)} \notag
     \\ & \ge b_1^2 (2 \bar{r} - 1)^{-1} (1+k)^{-\bar{r}}
    \ge b_1^2 b_2^{-1} (2 \bar{r} - 1)^{-1} \mu_{k}, \notag
\end{align}
where the third inequality follows from the relation $0< 2 \bar{r} - 1 < \bar r$ (since $\bar{r} \in (\frac{1}{2}, 1)$), and the last inequality follows from \eqref{eq-mu-bd}.
This implies that
\begin{equation*}
    \mu_{k} \le b_1^{-2} b_2 (2 \bar{r} - 1) \tau_{k} \quad\forall k\ge k_1,
\end{equation*}
which, together with \eqref{eq-k2}, yields
\begin{equation} \label{eq-def-C5}
     C_1 \tau_{k}
    + C_2  \mu_{k} {\widehat Q}_{k} \le \left( C_1 + C_2 b_1^{-2} b_2 (2 \bar{r} - 1)\right) \tau_k
    \eqcolon C_5 \tau_k \quad\forall k\ge \max\{k_1, k_2\}.
\end{equation}

Now, let $\bar k$ be defined as in Lemma~\ref{lm-kbar-conv}.
Setting $x^* = \bar{x}$ in \eqref{eq-key-conv-1} yields that
\begin{align}\label{eq-mu psi-Delta}
    C_0 \mu_{k} {\widehat Q}_{k+1}
    & \le  C_1 \mu_{k}^2 +   C_2 (\mu_k {\widehat Q}_{k} - \mu_{k+1}{\widehat Q}_{k+1})
    +  \| x^{k} - \bar x \|^2 - \|  x^{k+1} - \bar x\|^2 \notag
    \\ & = \Delta_{k} - \Delta_{k+1} \quad \forall k> \bar{k}.
\end{align}

Furthermore, since ${\widehat Q}_{k+1} \ge 0$ for any $k>\bar{k}$ (see \eqref{eq-def-psi-k}),
we obtain from \eqref{eq-key-conv-1} that,
for every $\widetilde{x} \in \Omega^*$,
\begin{align*}
    & \|  x^{k+1} - \widetilde{x}\|^2
    \le  \| x^{k} - \widetilde{x}\|^2 +
    C_1 \mu_{k}^2 +   C_2 (\mu_k {\widehat Q}_{k} - \mu_{k+1}{\widehat Q}_{k+1})  \quad \forall k > \bar{k}.
\end{align*}
This implies that, for every $k>\max\{\bar{k}, k_1, k_2\}$, $k^{\prime} \in \mathbb{N}$ and $\widetilde{x} \in \Omega^*$,
\begin{align}
    \|  x^{k+k^{\prime}} - \widetilde{x}\|^2
   & \le  \| x^{k + k^{\prime} -1} - \widetilde{x}\|^2 +
    C_1 \mu_{k + k^{\prime} -1}^2 +   C_2 (\mu_{k + k^{\prime} -1}
    {\widehat Q}_{k + k^{\prime} -1} - \mu_{k + k^{\prime}}{\widehat Q}_{k + k^{\prime}})
    \notag
    \\ & \le \| x^{k} - \widetilde{x} \|^2
    + C_1 \sum_{i=k}^{k+k^{\prime}-1} \mu_i^2
    + C_2 ( \mu_{k} {\widehat Q}_{k} - \mu_{k + k^{\prime}}{\widehat Q}_{k + k^{\prime}}) \notag
    \\ & \le \| x^{k} - \widetilde{x} \|^2
    + C_1 \tau_{k}
    + C_2  \mu_{k} {\widehat Q}_{k}
    \le \| x^{k} - \widetilde{x} \|^2  + C_5 \tau_{k}, \notag
\end{align}
where the third inequality holds since ${\widehat Q}_{k+k^{\prime}} \ge 0$ (see \eqref{eq-def-psi-k}),
and the last inequality follows from \eqref{eq-def-C5}.

Now, setting $\widetilde{x} = {\bar x}^{k}$ with
$\| \bar{x}^{k} - x^{k}\| = \mathrm{dist}(x^{k}, \Omega^*) $ in the last display,
we obtain that
\begin{align}\label{eq-xk-kprime}
   \|  x^{k+k^{\prime}} - \bar{x}^{k}\|^2
    \le \mathrm{dist}(x^{k}, \Omega^*)^2
    + C_5 \tau_{k} \quad
    \forall k>\max\{\bar{k}, k_1, k_2\}, k^{\prime} \ge 1.
\end{align}
Using the triangle inequality, we have
\begin{align*}
    \| x^{k} - x^{k+k^{\prime}} \|^2
   & \le (\| x^{k} - \bar{x}^{k} \| + \| x^{k+k^{\prime}} - \bar{x}^{k} \|)^2
    \le 2 (\| x^{k} - \bar{x}^{k} \|^2 +  \| x^{k+k^{\prime}} - \bar{x}^{k}\|^2)
   \\ & \le 2( 2\mathrm{dist}(x^{k}, \Omega^*)^2
    + C_5 \tau_{k}) \quad
    \forall k>\max\{\bar{k}, k_1, k_2\}, k^{\prime} \ge 1,
\end{align*}
where the last inequality follows from \eqref{eq-xk-kprime}.
For fixed $k> \max\{\bar{k}, k_1, k_2\}$, by passing to the limit as $k^{\prime} \to \infty$ in the last display, we obtain that
\begin{align*}
    \| x^{k} - \bar{x} \|^2  \le 4 \mathrm{dist}(x^{k}, \Omega^*)^2
    + 2 C_5 \tau_{k}
    \quad \forall k> \max\{\bar{k}, k_1, k_2\}.
\end{align*}
Combining the last display with the definition of $\Delta_{k}$ in \eqref{eq-def-tau-Delta},
we deduce that
\begin{align}
      \Delta_{k+1} & \le 4 \mathrm{dist}(x^{k+1}, \Omega^*)^2
    + (C_1 +2 C_5) \tau_{k+1}  +  C_2 \mu_{k+1} {\widehat Q}_{k+1}
    \notag
    \\& \le  4 \kappa^2 {\widehat Q}_{k+1}^{2(1-\theta)} + 3 C_5 \tau_{k+1}
     \le 4 \kappa^2 {\widehat Q}_{k+1}^{p^{-1}} + 3 C_5 \tau_{k+1}
     \quad \forall k> \max\{\bar{k}, k_1, k_2\},
     \label{eq-Delta-up-psi-tau}
\end{align}
where the second inequality follows from \eqref{eq-dist-psi} and \eqref{eq-def-C5},
and the last inequality holds thanks to $0 \le {\widehat Q}_{k+1} \le 1$ (see \eqref{eq-def-psi-k} and \eqref{eq-k2}) and upon defining $p\coloneqq \max\{\frac{1}{2(1-\theta)}, 1 \} \in [1, \infty)$.
Furthermore, it follows from \eqref{eq-mu-bd} that, for any $k> \max\{\bar{k}, k_1, k_2\}$,
\begin{align}
     & C_0 b_1 (1 + k)^{-\bar r} \Delta_{k+1}^{p}
     \le C_0 \mu_{k} \Delta_{k+1}^{p}
      \overset{\mathrm{(a)}}{\le}  C_0 \mu_{k} \left(4 \kappa^2 {\widehat Q}_{k+1}^{p^{-1}} + 3 C_5 \tau_{k+1}\right)^{p}
     \notag
     \\& \overset{\mathrm{(b)}}{\le}  2^{p - 1}  C_0 \mu_{k} \left(
     (4 \kappa^2)^{ p } {\widehat Q}_{k+1}
     + (3 C_5 \tau_{k+1})^{ p }
     \right) \notag
     \\ & \overset{\mathrm{(c)}}{\le}  2^{p - 1} (4 \kappa^2)^{ p }
     (\Delta_{k} - \Delta_{k+1}) + 2^{p - 1}  C_0  (3 C_5)^{ p }  \mu_{k}\tau_{k+1}^{ p }
     \notag
     \\ & \overset{\mathrm{(d)}}{\le}  2^{p - 1} (4 \kappa^2)^{ p }
     (\Delta_{k} - \Delta_{k+1}) +  2^{p - 1}  C_0 (3 C_5)^{ p }  b_2^{2p+1} (2 \bar{r} - 1)^{-p}
     (1+k)^{-(2 \bar{r} - 1)p-\bar{r}},
     \notag
\end{align}
where (a) follows from \eqref{eq-Delta-up-psi-tau}, (b) follows from Jensen's inequality (since $p\ge 1$),
(c) holds because of \eqref{eq-mu psi-Delta},
and (d) follows from \eqref{eq-mu-bd} and \eqref{eq-tau-up-bd}.

Rearranging terms in the above display, we obtain that
\begin{equation*}
    \Delta_{k+1} + a_0 (1 + k)^{-\bar r} \Delta_{k+1}^{p} \le \Delta_{k}  +  a_1 (1 + k)^{-(2 \bar{r} - 1)p-\bar{r}}
    \quad \forall k \ge N_1
\end{equation*}
with
$$
N_1 \coloneqq \max\{\bar{k}, k_1, k_2\}+1, \quad a_0 \coloneqq
\frac{b_1 C_0}{2^{p-1}
(4 \kappa^2)^{ p }}
\quad \mbox{and} \quad
a_1 \coloneqq
b_2 C_0 \left(  \frac{
3 C_5 b_2^2 }
{4 \kappa^2(2 \bar{r} - 1)}
\right)^{ p }.
$$
By this and Lemma~\ref{lm-rate},
there exist $a_2 > 0$ and $N_2 \in \mathbb{N}_{0}$ such that
 \begin{equation*}
    \Delta_{k} \le a_2 (1+k)^{-\widetilde{s}} \quad \forall k\ge N_2
\end{equation*}
with
\begin{equation*}
    \widetilde{s} = \begin{cases}
    2 \bar{r} - 1 & \mbox{if}\ p = 1,
    \\ \min\left\{ 2 \bar{r} - 1, \frac{1- \bar r}{p - 1}   \right\}
    & \mbox{if}\ p > 1.
\end{cases}
\end{equation*}

Since $p = \max\{\frac{1}{2(1-\theta)}, 1 \}$,
one can see from the last display and the definition of $s$ in \eqref{eq-rate-s} that $\widetilde{s} = 2s$.
Combining this with the definition of $\Delta_{k}$ in \eqref{eq-def-tau-Delta},
we obtain that \eqref{eq-conv-rate} holds with $\widetilde{\kappa} = \sqrt{a_2}$ and $\widetilde{k} = N_2$.
\end{proof}

\section{Numerical experiments}\label{sec5}
In this section, we test the performance of Algorithm~\ref{alg-sl-sm-esqm} on a synthetic problem.\footnote{All codes are written and executed in Matlab  R2025b on a machine equipped with a 12th Gen Intel(R) Core(TM) i7-12700 CPU at 2.10 GHz, 32 GB of RAM, and running Windows 11 Enterprise.
		The codes are available at \href{https://github.com/XuJiefeng-CN/sESQM-GL}{https://github.com/XuJiefeng-CN/sESQM-GL}.}
	Specifically, we consider the following group Lasso regularized minimization problem with polynomial constraints:
	\begin{align}\label{opt-group-ell-one-homo}
		& \min_{x\in \mathbb{R}^{n}}\ \frac{1}{2}x^{\top} Q x + q^{\top}x + \tau \sum_{J \in \mathcal{J}} \|x_{J}\|_{2} \notag 
		\\ &\ \mbox{s.t.}\quad
		c_{i}(x) \coloneqq \sum_{j_1\in [n], \ldots, j_d \in [n]} a_{j_1, \ldots, j_d}^{(i)}
		x_{j_1}\cdots x_{j_d} + x^{\top} q^{i} -  b_i \le 0 \quad \forall i \in [m], \notag
		\\ & \qquad \ \ x \in \mathcal{C} \coloneqq \{ z \in \mathbb{R}^{n} : \| z_{J} \|_{2} \le M \ \ \ \forall J\in \mathcal{J} \},
	\end{align}
	where $Q\in \mathbb{R}^{n \times n}$, $q \in \mathbb{R}^{n}$, $\tau > 0$,
	$\mathcal{J}$ is a partition of $[n]$,
	$x_{J}$ is the subvector of $x\in \mathbb{R}^{n}$ indexed by $J \in \mathcal{J}$,
	$q^{i} \in \mathbb{R}^{n}$, $A^{(i)} = (a_{j_1, \ldots, j_d}^{(i)})$ is a real-valued array with size $n \times \cdots \times n$ of order $d \ge 1$ for each $i\in [m]$, $b \in \mathbb{R}^{m}_{++}$,
    and $M>0$.
	Let
	\begin{align}\label{eq-numerical-ex}
		f(x) \coloneqq \frac{1}{2}x^{\top} Q x + q^{\top}x,\
		P_1(x) \coloneqq \tau \sum_{J \in \mathcal{J}} \|x_{J}\|_2,\  P_2(x) \coloneqq 0, \
		h(y) \coloneqq \max_{i\in [m]} y_i.
	\end{align}
	Then, one can see that \eqref{opt-group-ell-one-homo} is an instance of \eqref{opt-dc-hc}, and
	satisfies Assumption~\ref{ass-gen} since the origin is a strictly feasible point for \eqref{opt-group-ell-one-homo}.
	
	We perform numerical experiments on random instances of \eqref{opt-group-ell-one-homo}, which are generated as follows.
	Let $(n, m)$ be the problem dimensions.
	We set $\tau = 1$ and $M=20$.
	We consider the case where $n$ is even, and set $\mathcal{J} = \{ \{1, 2\}, \{3, 4\}, \ldots \{n-1, n \} \}$.
	We generate $b$ using the Matlab command \verb|0.1+0.9*rand(m,1)|.
	We consider the following convex and nonconvex settings.
	\begin{enumerate}[{\rm \bf (a)}]
		\item {\bf Convex setting.}
		Let $d = 2$, $p=100$, $n = 1000$ and $m = 500$.
		We let $U \in \mathbb{R}^{p\times n}$ be the column-normalized matrix obtained from a random matrix $B\in \mathbb{R}^{p\times n}$ whose entries are i.i.d. standard Gaussian.
		We set $Q=U^{\top} U$ and $q = - U^{\top}\widetilde{q}$,
		where $\widetilde{q}$ is generated by the Matlab command
		\verb|10+randn(p,1)|.
		Then $f$ in \eqref{eq-numerical-ex} is convex and can be reformulated as $f(x) = \frac{1}{2}\| U x - \widetilde q\|_2^{2}  -\frac12\|\widetilde q\|_2^{2}$ for all $x\in \mathbb{R}^{n}$.\footnote{In our numerical test, we compute $f$ via this formula, which takes $O(np)$ flops.}
		Moreover, for each $i\in [m]$, we let $A^{(i)} = U_i \mathrm{Diag}(a^{i}) U_i^{\top}$, where $U_i\in \mathbb{R}^{n\times n}$ is an orthogonal matrix generated by the Matlab command \verb|qr(randn(n))| and $a^i \in \mathbb{R}^{n}$ is generated by the Matlab command \verb|100*sprand(n,1,0.1)|.\footnote{ Here, for a vector $y$, we let ${\rm Diag}(y)$ denote the diagonal matrix with the $i$th diagonal entry being $y_i$ for all $i$.}
		We let $q^{i}\in \mathbb{R}^{n}$ be a random vector that has i.i.d. Gaussian entries with mean $10$ and variance $1$, for each $i\in [m]$.
		Then, the function $c_i$ in \eqref{opt-group-ell-one-homo} is convex for each $i\in [m]$.
		Therefore, the instance of \eqref{opt-group-ell-one-homo} generated as above is a convex program and satisfies Assumption~\ref{ass-cq} as the origin is strictly feasible.
		In this setting, we let $x^{0} = {\rm Proj}_{\mathcal{C}}(U^{\dagger}\widetilde{q})$ be the initial point of Algorithm~\ref{alg-sl-sm-esqm}.
		
		\item {\bf Nonconvex setting.}
		Let $d = 3$, $n = 180$ and $m = 100$.
		We set $Q = U \mathrm{Diag}(a) U^{\top}$, where $U\in \mathbb{R}^{n\times n}$ is an orthogonal matrix generated by the Matlab command \verb|qr(randn(n))|, and $a \in \mathbb{R}^{n}$ is generated by the Matlab command \verb|20*(2*rand(n,1)-1)|.
		We let $q\in \mathbb{R}^{n}$ be a random vector that has i.i.d. Gaussian entries with mean $10$ and variance $1$.
		We generate $A^{(i)}$ by the Matlab command \verb|2*rand([n*ones(1,d)])-1| followed by a symmetrization, and let $q^i = 0$, for each $i\in [m]$.
		By Proposition~\ref{prop-ex}, the instance of \eqref{opt-group-ell-one-homo} generated as above satisfies Assumption~\ref{ass-cq}.
		Hence, Algorithm~\ref{alg-sl-sm-esqm} can be applied to solving \eqref{opt-group-ell-one-homo}.
		In this setting, we let $x^{0} = {\rm Proj}_{\mathcal{C}}(-Q^{-1}q)$ be the initial point of Algorithm~\ref{alg-sl-sm-esqm}.
	\end{enumerate}
	
	We now present the parameter settings of Algorithm~\ref{alg-sl-sm-esqm}.
	First, we define, for all $\mu>0$, $t \in \mathbb{R}$ and $y\in \mathbb{R}^{m}$,
	\begin{align*}
    \varphi_{\mu} (t)  \coloneqq \begin{cases}
        [t]_{+} & \mbox{if}\ |t| > \frac{\mu}{2},
        \\ \frac{t^2}{2 \mu} + \frac{t}{2} + \frac{\mu}{8}
        & \mbox{if}\ |t| \le \frac{\mu}{2},
        \end{cases}
		\quad \mbox{and} \quad h_{\mu}(y)
		\coloneqq  \inf_{u \in \mathbb{R}^{m}} \{ h(u)  + \tfrac{1}{2\mu} \| u - y \|^2_2 \} + \tfrac{\mu}{2}.
	\end{align*}
    Then, according to Proposition~\ref{cor-mps}, \cite[Theorem~4.3]{25SG} and the discussions in \cite[Sections~4.2.1 \& 4.2.3]{25SG}, we see that
    $\{\varphi_{\mu}\}_{\mu>0}$ and $\{h_{\mu}\}_{\mu>0}$ satisfy the conditions in Remark~\ref{ass-sm}
    with $\alpha_3^{\varphi} = \frac{1}{8}$.\footnote{The value of $\alpha_3^{\varphi}$ comes from the value of ${\rm w}_\sigma$ in  \cite[Section~4.2.1]{25SG}, thanks to the definition of optimal inner smoothing in \cite[Definition~4.5]{25SG} and the distance between functions in \cite[Eq.~(2.2)]{25SG}.}
	Hence, we set $\alpha_3^{\varphi} = \frac{1}{8}$ in Algorithm~\ref{alg-sl-sm-esqm}.
    Moreover, we have that $\partial h(0) = \{\lambda \in \mathbb{R}_+^{m} : \sum_{i=1}^{m} \lambda_i = 1 \}$, and that,
    for every $\mu>0$ and $y \in \mathbb{R}^{m}$,
	\begin{align*}
		& {\rm prox}_{\mu h }(y) = y - \mu {\rm Proj}_{\partial h(0)}( y/ \mu ),
		\ \ \nabla h_{\mu}(y) = \frac{1}{\mu} ( y -  {\rm prox}_{\mu h }(y))
		= {\rm Proj}_{\partial h(0)}( y/ \mu ),
		\\ & h_{\mu}(y) = h ({\rm prox}_{\mu h }(y)) + \tfrac{\mu}{2}\| \nabla h_{\mu}(y) \|_2^{2} + \tfrac{\mu}{2}.
	\end{align*}
	
	Next, we discuss the setting of $\{\mu_k\}$.
	Specifically, we let $n_0 = 200$, $\nu_0=1/(n_0+1)$ and $K = 5000$.
	Let $\bar r \in (0.01, 1)$ and $\bar s\ge 0$.
	For each $k\in \mathbb{N}_{0}$, let $k_1$, $k_2\in \mathbb{N}_{0}$ satisfy $k = k_2 (n_0+1) + k_1$ and $k_1\le n_0$,
	and let $\bar k \coloneqq k_2 (n_0 +1) + \nu_{0} k_1$.
	We construct $\{\mu_{k}\}$
	as follows: set $\mu_0 = 10^6$ and
	$
		\mu_{k} = \mu_{0}\left( \bar k +
		1\right)^{-r_{\bar k}} ({\rm log}( \bar k +
		3 ))^{-s_{\bar k}}
    $ for all $k\in \mathbb{N}$,
	where $r_{j} \coloneqq 0.01 + \min\{1, \frac{j}{K}\} (\bar r - 0.01)$
	and $s_{j} \coloneqq \min\{1, \frac{j}{K}\} \bar s$
	for all $j\in \mathbb{N}_0$.
	Following the same line of reasoning as in \cite[Footnote~14]{25XPS},
	one can verify that the above sequence $\{\mu_k\}$ satisfies Assumption~\ref{ass-mu}(i).
	
	We set $\widehat{\gamma}_0 = 10$ and $\widehat{\gamma}_{t} = \widehat{\gamma}_0(1+t)^{-0.5}$ for $t\in \mathbb{N}$.
	We set $\overline{L} = 10^{11}$, $\underline{L} = 10^{-11}$, $c_1 = 0.001$ and $c_2 = 10^{-3}/\max\{1, \gamma_0\mu_0\}$.
	For $k = 0$, we let $L_{k,0} = 1$.
	For $k\ge 0$, if $\sqrt{|\langle s^{k}, z^{k} \rangle|}> 10^{-12}$ and $L_{\mbox{\footnotesize BB}} \coloneqq \mu_{k+1} \| z^{k} \|^2/|\langle s^{k}, z^{k} \rangle| \in [\underline{L}, \overline L]$, then let $L_{k+1,0} = L_{\mbox{\footnotesize BB}}$; otherwise, let $L_{k+1,0} = \min\{\overline L, \max\{\underline{L}, \frac{1}{2} L_{k,0}\} \}$;
	here $s^{k} \coloneqq x^{k+1} - x^{k}$ and
	$$
	z^{k} \coloneqq \gamma_{k}( \nabla f(x^{k+1}) - \nabla f(x^{k})) + \nabla g_{\mu_{k+1}}(x^{k+1})
	- \nabla g_{\mu_{k}}(x^{k}).
	$$
	
	We terminate Algorithm~\ref{alg-sl-sm-esqm} when it holds that, for all integers $j \in [\max\{0, k-3\}, k]$,
	\begin{align}
		&  \frac{\|x^{j+1} \!-\! x^{j}\|}{ \min\{1, \mu_{j} \gamma_j\} a_j} \! \le\!  \epsilon, \
		\frac{\max\{g(x^{j+1}), | \lambda_{j+1} g(x^{j+1}) | \} }{a_j} \!\le\!   \epsilon, \
		\frac{g(x^{j+1})\! -\! \langle c(x^{j+1}),  u^{j+1} \rangle}{a_j} \!\le\! \epsilon, 
        \notag
	\end{align}
	where $\epsilon=10^{-5}$, and for all $j\in \mathbb{N}_0$, $a_j \coloneqq \max\{1,\|x^{j+1}\|\}$, and $\lambda_{j+1}$ and $u^{j+1}$ are defined by \eqref{eq-lambda-def} and \eqref{eq-nabla-h-mu-part}, respectively;
	the first criterion above is motivated by \eqref{eq-delta-upbd-omega},
	and the third criterion above implies that $u^{j+1} \in \partial_{a_j  \epsilon} h( c(x^{j+1}) )$.
	Moreover, the algorithm is also terminated if $k=K$, or if {\bf Step 2} in Algorithm~\ref{alg-sl-sm-esqm} is invoked more than $40$ times for some $k$.
	
	We investigate several choices of the parameters $(\bar r, \bar s)$ governing the rate of decrease of $\{\mu_k\}$, specifically, $\bar r \in \{0.3, 0.6, 0.9\}$ and $\bar s \in \{3, 6\}$.
    Fig.~\ref{fig-1} summarizes the results for the convex and nonconvex instances of \eqref{opt-group-ell-one-homo}, where Algorithm~\ref{alg-sl-sm-esqm} is compared with CVX (version 2.2) using SDPT3 (version 4.0) as the solver for the {\bf convex setting}.
    In the {\bf convex setting}, among the variants of Algorithm~\ref{alg-sl-sm-esqm}, we see that the iterates become feasible within 1000 iterations and larger $\bar r$ and $\bar s$ lead to faster decrease in objective values. Moreover, Algorithm~\ref{alg-sl-sm-esqm} is faster than CVX and does not compromise too much in terms of the terminating objective value. On the other hand, in the {\bf nonconvex setting}, while we also observe that the iterates become feasible within 1000 iterations, it is interesting to note that smaller $\bar r$ and $\bar s$ lead to faster decrease in objective values.

    	\begin{figure}[h]
		\caption{
			Performance of {\em s}ESQM under different choices of $(\bar r,\bar s)$ for convex and nonconvex instances of \eqref{opt-group-ell-one-homo}.
			In the subfigures, $\Delta_k\coloneqq |\psi(x^{k}) - \psi^{\rm cvx}|/\max\{1,|\psi^{\rm cvx}|\}$ with $\psi^{\rm cvx}$ being the optimal value returned by CVX.
			For the tables, `obj.' and `constr.' denote, respectively, the optimal value $\psi(x^{k})$ and the feasibility violation $g^{+}(x^{k})$ of the corresponding method,
			and `time' denotes the CPU time in seconds.
			The reported time for {\em s}ESQM does not include the time for computing the initial point $x^0$, and the time for CVX includes the time for automatic problem reformulation.
		}
        \label{fig-1}
		\centering
		\begin{minipage}[t]{0.45\textwidth}
			\vspace{0pt}
			\centering
			\includegraphics[width=\linewidth]{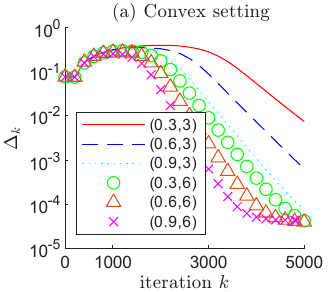}
		\end{minipage}
        \hfill
        \begin{minipage}[t]{0.45\textwidth}
			\vspace{0pt}
			\centering
			\includegraphics[width=\linewidth]{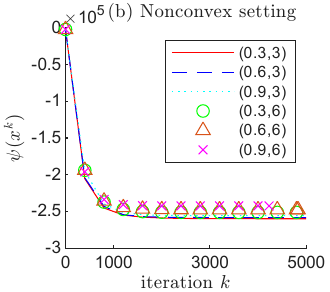}
		\end{minipage}
		\hfill
		\begin{minipage}[t]{0.45\textwidth}
			\vspace{0pt}
			\centering
			\includegraphics[width=\linewidth]{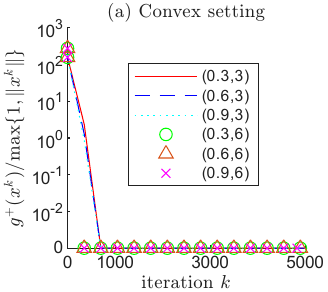}
		\end{minipage}
		\hfill
		\begin{minipage}[t]{0.45\textwidth}
			\vspace{0pt}
			\centering
			\includegraphics[width=\linewidth]{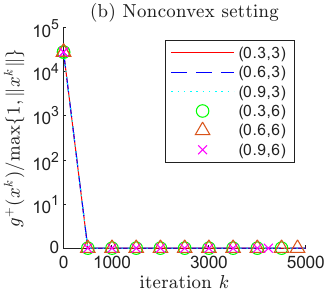}
		\end{minipage}

        \begin{minipage}[t]{0.45\textwidth}
			\vspace{5pt}
			\centering
			
			\scalebox{.9}{
				\begin{tabular}{cccc} 
\multicolumn{4}{c}{{(a)} Convex setting}\\ 
\hline 
$(\bar r, \bar s)$ & obj. & constr. & \text{time} \\
\hline 
$( 0.3, 3)$ &    -3980.401 & 0 &   740.8\\
$( 0.6, 3)$ &    -4007.801 & 0 &   737.9\\
$( 0.9, 3)$ &    -4010.014 & 0 &   723.6\\
$( 0.3, 6)$ &    -4010.149 & 0 &   720.5\\
$( 0.6, 6)$ &    -4010.160 & 0 &   722.4\\
$( 0.9, 6)$ &    -4010.145 & 0 &   702.7\\
       CVX  &    -4010.317 &  6.50e-07 &  2733.0\\ 
\hline 
\end{tabular}		
			}
		\end{minipage}
		\hfill
		\begin{minipage}[t]{0.45
				\textwidth}
			\vspace{5pt}
			\centering
			\scalebox{.9}{
				\begin{tabular}{cccc} 
\multicolumn{4}{c}{{(b)} Nonconvex setting}\\ 
\hline 
$(\bar r, \bar s)$ & obj. & constr. & \text{time} \\
\hline 
$( 0.3, 3)$ &    -259941.9 & 0 &  1026.2\\
$( 0.6, 3)$ &    -258337.1 & 0 &  1081.6\\
$( 0.9, 3)$ &    -255761.3 & 0 &  1169.3\\
$( 0.3, 6)$ &    -251312.5 & 0 &  1109.8\\
$( 0.6, 6)$ &    -247519.8 & 0 &  1044.8\\
$( 0.9, 6)$ &    -242500.7 & 0 &   931.1\\
\hline 
\end{tabular}	
			}
		\end{minipage}
	\end{figure}

    \ \\

\noindent {\bf Acknowledgements.} The work of the second author was supported in part by the Hong Kong Research Grants Council PolyU 15300423.

\noindent {\bf Data availability.}
The code used to generate our numerical results can be found at the following link: \href{https://github.com/XuJiefeng-CN/sESQM-GL}{https://github.com/XuJiefeng-CN/sESQM-GL}.


\end{document}